\documentclass{amsart}

\usepackage{amssymb,amsmath,esint}
\usepackage[all]{xy}
\usepackage{graphicx}
\usepackage{enumerate}

\usepackage{colortbl}
\usepackage{cite}
\usepackage{xcolor}

\usepackage{amsrefs}
\usepackage{soul}

\usepackage{verbatim}

\usepackage{geometry}
\usepackage{comment}
\usepackage{graphicx,caption,subcaption}
\usepackage{tikz}
\usepackage{multirow} 

\colorlet{linkequation}{blue}
\usepackage[colorlinks,plainpages=true,pdfpagelabels,hypertexnames=true,colorlinks=true,pdfstartview=FitV,linkcolor=blue,citecolor=red,urlcolor=black]{hyperref}
\usepackage{hyperref}

\PassOptionsToPackage{unicode}{hyperref}
\PassOptionsToPackage{naturalnames}{hyperref}
\definecolor{dgreen}{rgb}{0,0.5,0}
\definecolor{violet}{rgb}{0.5,0,0.5}
\definecolor{dred}{rgb}{0.7,0,0}
\definecolor{ddred}{rgb}{0.5,0,0}
\definecolor{dblue}{rgb}{0,0,0.5}
\definecolor{ddblue}{rgb}{0,0,0.3}
\definecolor{llgray}{rgb}{0.9,0.9,0.9}
\definecolor{lgray}{rgb}{0.7,0.7,0.7}

\newtheorem{defn}{Definition}[section]
\newtheorem{lemma}[defn]{Lemma}
\newtheorem{proposition}[defn]{Proposition}
\newtheorem{theorem}[defn]{Theorem}
\newtheorem{cor}[defn]{Corollary}

\numberwithin{equation}{section}

\newcommand{\bq}{\begin{equation}}
\newcommand{\eq}{\end{equation}}

\newcommand{\R}{{ \mathbb{R}  }}

\newcommand{\bke}[1]{\left( #1 \right)}
\newcommand{\bkt}[1]{\left[ #1 \right]}
\newcommand{\bket}[1]{\left\{ #1 \right\}}
\newcommand{\norm}[1]{\left\Vert #1 \right\Vert}
\newcommand{\abs}[1]{\left| #1 \right|}

\newenvironment{pfthm1}{{\par\noindent\bf
           Proof of Theorem~\ref{THM1}. }}{\hfill\fbox{}\par\vspace{.2cm}}
\newenvironment{pfthm3}{{\par\noindent\bf
           Proof of Theorem~\ref{THM3}. }}{\hfill\fbox{}\par\vspace{.2cm}}

\DeclareMathOperator{\supp}{supp}

\AtBeginDocument{%
   \def\MR#1{}
}
\nocite{*}

\begin{document}

%\title{ Keller--Segel--Navier--Stokes systems with signal--dependent power--law decay in general sensitivities }
%\title{ Keller--Segel systems with/without fluid coupling with signal--dependent power--law decay in general sensitivities }
%\title{Keller–-Segel Systems With/Without Fluid Coupling involving General Sensitivities with Signal-Dependent Power-Law Decay}
\title[Keller--Segel--Navier--Stokes systems]{Keller--Segel--Navier--Stokes systems involving general sensitivities with Signal-Dependent Power-Law Decay}

\author[J. Ahn]{Jaewook Ahn}
\address{J. Ahn: Department of Mathematics, Dongguk University, Seoul 04620, Republic of Korea}
\email{jaewookahn@dgu.ac.kr}

\author[S. Hwang]{Sukjung Hwang}
\address{S. Hwang: Department of Mathematics Education, Chungbuk National University, Cheongju 28644, Republic of Korea}
\email{sukjungh@chnu.ac.kr}

\thanks{J. Ahn's work  is partially supported by RS-2024-00336346 and RS-2025-24523482. S. Hwang's work is partially supported by NRF-2022R1F1A1073199 and RS-2025-23324098.}

\date{}

%%%%%%%%%%%%%%%%%%%
\makeatletter
\@namedef{subjclassname@2020}{%
  \textup{2020} Mathematics Subject Classification}
\makeatother
\subjclass[2020]{35B45, 35A09, 35K57, 35Q35, 35Q92}
%{2020 AMS Subject Classification}\, :\,  35A01, 35K55, 35Q84, 92B05

%{Keywords}\,:\, Porous medium equation, weak solution, Wasserstein space, a bounded domain 
\keywords{Keller--Segel--Navier--Stokes system, uniform boundedness, asymptotics}

%\Addresses

%%%%%%%%%%%%%%%%%%%%%%%%%%%%%%%%%%%%%%%%%%%%%%%%%%%%%%%%%

\begin{abstract}
This paper investigates a two-dimensional Keller--Segel--Navier--Stokes system with a tensor-valued chemotactic sensitivity $S(x,n,c)$. Under a signal-dependent power-decay condition $|S(x,n,c)| \le s_0 (s_1+c)^{-\gamma}$, we establish the global existence and uniform-in-time boundedness of classical solutions for both fluid-coupled ($\gamma > 1/2$) and fluid-free ($\gamma > 0$) systems. The proof relies on a sequence of localized energy estimates, including the $L^{2}_{\rm loc}$-smallness of the weighted gradient of the signal concentration, to overcome the mathematical difficulties arising from signal production and fluid transport. Furthermore, under specific structural assumptions on the sensitivity tensor, we prove that solutions of the fluid-free system converge exponentially to the spatially homogeneous steady state. To this end, we establish an interpolation inequality involving the H\"older norm, which is of independent interest and seems to have broad applications.
\end{abstract}
\maketitle

%%%%%%%%%%%%%%%%%%%%%%%%%%%%%%%%%%%%%%%%%%%%%%%%%%%%%%%%%
%{
%  \hypersetup{linkcolor=black}
%  \tableofcontents
%}

%\hypersetup{linkcolor=black}
%\tableofcontents

%%%%%%%%%%%%%%%%%%%%%%%%%%%%%%%%%%%%%%%%%%%%%%%%%%%%%%%%%%%%

\section{Introduction}

The question of blow-up versus boundedness in Keller--Segel type systems is strongly influenced by the structural properties of the chemotactic sensitivity. 
Since the pioneering work of Keller and Segel~\cite{Keller_Segel_1970}, the classical model
\[
n_{t} = \Delta n -   \nabla \cdot (n\chi(c)\nabla c), \qquad \tau c_{t}=\Delta c + n -c
\]
with a constant sensitivity 
$\chi(c)=\chi_0>0$
 has been investigated extensively. A notable feature of this system is the possibility of finite-time blow-up. For instance, it may occur if the initial mass exceeds a critical mass threshold in two dimensions~\cite{Jager_Luckhaus_1992,Nagai_Senba_Yoshida_1997,Senba_Suzuki_2001}, and  even for arbitrarily small initial mass  in higher-dimensions~\cite{Nagai_2000, Herrero_Medina_Velazquez_1997}.
 A fundamentally different picture emerges when the chemotactic sensitivity depends on the signal concentration. When the sensitivity function takes a singular form such as $\chi(c)=\chi_{0}/c$,
  or more generally satisfies $\chi(s)\to 0$ as $s\to\infty$,  the decay of $\chi(c)$ at large $c$ can prevent singularity formation. Indeed, the authors in \cite{Fujie_2014} studied the parabolic--elliptic system$(\tau=0)$ with $\chi(c)=\chi_{0}/ c$
in a smoothly bounded domain $\Omega\subset\mathbb{R}^{2}$ and established global existence and uniform boundedness of solutions under a smallness condition on $\chi_{0}$. This result was generalized by Fujie and Senba~\cite{FujieSenba_DCDSB_2016}, who proved global existence and boundedness for the same parabolic--elliptic system with a general sensitivity $\chi>0$ satisfying  $\chi(s)\to 0$ as $s\to\infty$. 
In its fully parabolic version $(\tau>0)$, Fujie and Senba~\cite{fujie2016global} obtained global existence and boundedness results for radial solutions under the smallness $\tau\ll 1$, and subsequently proposed a sufficient condition on the sensitivity function that ensures boundedness of solutions to parabolic--parabolic Keller--Segel systems in higher-dimensionsal settings \cite{FujieSenba_Nonlinearity_2018}. 
Regarding long-time asymptotics under such a singular structure $\chi(c)=\chi_{0}/ c$, Ahn, Kang, and Lee~\cite{Ahn_Kang_Lee_2019} established eventual smoothness and exponential stabilization of global weak solutions for the parabolic--elliptic system in three and four dimensions.
When fluid coupling is further imposed, Winkler~\cite{Winkler2020} established global boundedness of small-mass solutions in the 2D Keller--Segel--Navier--Stokes system with a constant sensitivity, and
Black, Lankeit, and Mizukami~\cite{Black_Lankeit_Mizukami_2018} addressed the fluid-coupled Keller--Segel system with singular sensitivity $\chi(c)=\chi_{0}/c$ in bounded domains $\Omega\subset\mathbb{R}^{d}$, $d=2,3$, and proved global existence and boundedness under $\chi_{0}<\sqrt{2/d}$. 

In realistic biological environments, such as complex extracellular matrices or fluids, cell movement is often anisotropic, necessitating the modeling of chemotactic sensitivity as a tensor rather than a scalar~\cite{Bellomo_2015}. When the sensitivity $S$ is tensor-valued, a key mathematical difficulty arises, as one cannot exploit the cancellation structure that links the chemotactic flux $\nabla \cdot (nS\nabla c)$ with the energy-type estimates in the way that is possible for the scalar case~\cite{Winkler2012, Lankeit_Winkler_2023}. Under a small-data assumption, Cao and Lankeit~\cite{CaoLankeit_2016} obtained globally bounded solutions for a 3D chemotaxis--Navier--Stokes system with tensor-valued sensitivity. In the context of fluid-coupled Keller--Segel system with tensor-valued sensitivity satisfying a saturation condition $|S(x,n,c)|\le C_{S}(1+n)^{-\alpha}$, Wang and Xiang~\cite{WangXiang_2015,WangXiang_2016} established global existence and boundedness in two ($\alpha>0$) and three ($\alpha>1/2$) dimensions, and Winkler~\cite{Winkler_2018_fluid_saturated} relaxed this condition to  $\alpha>1/3$ in three dimensions.  
We note, however, that all the above results on fluid-coupled systems with tensor-valued sensitivity require either small-data assumptions or cell density-dependent saturation of the sensitivity. In particular, to the best of our knowledge, the question of global bounded solvability for Keller--Segel--Navier--Stokes systems with tensor-valued sensitivity and linear signal production  without any smallness conditions has remained open.

Motivated by these previous works, in this paper, we consider the following two-dimensional Keller--Segel--Navier--Stokes system involving a tensor-valued sensitivity $S$:
\begin{equation}\label{KS01}
\begin{cases}
n_t + u\cdot\nabla n = \Delta n - \nabla\cdot\bigl(n\,S(x,n,c)\nabla c\bigr), & \text{in } \Omega\times(0,\infty),\\
-\Delta c + u\cdot\nabla c + c = n, & \text{in } \Omega\times(0,\infty),\\
u_t + (u\cdot\nabla)u = \Delta u - \nabla\pi + n\nabla\Phi, & \text{in } \Omega\times(0,\infty),\\
\nabla\cdot u = 0, & \text{in } \Omega\times(0,\infty), \\
n(\cdot, 0)=n_0, \ u(\cdot, 0)=u_0, & \text{in } \Omega,
\end{cases}
\end{equation}
where $\Omega\subset\mathbb{R}^{2}$ is a bounded domain with smooth boundary $\partial\Omega$. 
Here, $n, c, u$, and $\pi$ denote the cell density, the chemical signal concentration, the fluid velocity, and the associated pressure, respectively. The function $\Phi$ represents a given external potential.

The boundary conditions are given by 
\begin{equation}\label{KS_boundary}
\bigl(\nabla n - n S(x,n,c)\nabla c\bigr)\cdot \nu =0, \qquad \nabla c \cdot \nu  = 0, \qquad u=0 \quad \text{on } \partial\Omega \times (0,\infty),
\end{equation}
where $\nu$ denotes the outward unit normal vector to $\partial \Omega$. Moreover, we assume that 
\begin{equation}\label{KS03}
\Phi\in C^{2}(\overline{\Omega}),\qquad n_0\in W^{1,\infty}(\Omega),\qquad 0 \not\equiv n_0\ge 0, \qquad u_0\in D(A^{\alpha})
\end{equation}
where $A^{\alpha}$, for $\alpha \in (\frac12,1)$, denotes the realization of the Stokes operator in the solenoidal subspace $L^{2}_{\sigma}(\Omega):= \bigl\{f\in L^{2}(\Omega)\,\big|\, \nabla \cdot f=0 \bigr\}$ of $L^{2}(\Omega)$. No initial datum is prescribed for $c$ because the equation $\eqref{KS01}_{2}$ is elliptic.

Let $\mathbb{R}_{+}:=(0,\infty)$. The sensitivity tensor $S=(S_{ij})_{i,j\in\{1,2\}}$ satisfies
\begin{equation}\label{KS_S}
S_{ij}\in C^{2}(\overline{\Omega}\times \overline{\mathbb{R}_{+}}\times \mathbb{R}_{+} )\quad \text{for all } i,j\in\{1,2\}, 
\end{equation}
and there exist constants $s_0>0$, $s_1\ge 0$, and $\gamma>0$ such that
\begin{equation}\label{KS02}
|S(x,r,s)| \le \frac{s_{0}}{(s_1+s)^{\gamma}} \quad \text{for } (x,r,s)\in \overline{\Omega}\times \overline{\mathbb{R}_{+}}\times \mathbb{R}_{+}.    
\end{equation}

The key novelty of this work lies in the treatment of the tensor-valued structure of $S$ under this singular or saturated decay condition~\eqref{KS02}. As discussed above, when the sensitivity is tensor-valued, the cancellation structure available for scalar sensitivities are no longer applicable.

To establish the global existence and boundedness of classical solutions to our system, we are motivated by the $\varepsilon$-regularity argument (e.g., Fujie and Senba \cite{FujieSenba_DCDSB_2016}). In fluid-free systems, this approach prevents blow-up by utilizing a uniform bound on $1/c$ when the local-in-space mass of $n$ is less than some $\varepsilon_{0}\ll1$, and otherwise by exploiting the weakened chemotactic drift locally in space due to the decay of the sensitivity. Unfortunately, due to the additional transport terms from the Navier--Stokes coupling, it is highly non-trivial to apply this exact $\varepsilon$-regularity idea. 
To overcome this difficulty, we modify the approach from Ahn, Kang, and Lee \cite{Ahn_Kang_Lee_2023}, which uses the  smallness of $\nabla c$ in $L^2_{\rm loc}(\Omega)$ to study a consumption-type parabolic--elliptic  system with tensor-valued sensitivities in a fluid-free setting. By combining this local smallness idea with an $\varepsilon$-regularity type argument, we successfully apply the method to our Keller-Segel--Navier--Stokes model under the power-decay condition. This method is also applicable to fluid-free systems with more general parameters $\gamma$ and $s_{1}$, which leads to the following result on the global existence and boundedness for both the fluid-coupled and fluid-free systems.
\begin{theorem}\label{THM1}
Let $\Omega\subset\R^2$ be a bounded smooth domain.

\smallskip
\noindent\textbf{(i) Fluid-coupled case.}
Assume that $\gamma>\frac12$ and $s_1>0$.
Then \eqref{KS01}--\eqref{KS02} admits a unique global classical solution $(n,c,u,\pi)$ such that
\[
n\in C([0,\infty);C(\overline{\Omega}))\cap C^{2,1}(\overline{\Omega}\times(0,\infty)),
\qquad
c\in C^{2,0}(\overline{\Omega}\times(0,\infty)),
\]
\[
u\in C([0,\infty);D(A^\alpha))\cap C^{2,1}(\overline{\Omega}\times(0,\infty)),
\qquad
\pi\in C^{1,0}(\overline{\Omega}\times(0,\infty)),
\]
and moreover
\[
n\in L^\infty(0,\infty;L^\infty(\Omega)),\quad
c\in L^\infty(0,\infty;W^{1,\infty}(\Omega)),\quad
u\in L^\infty(0,\infty;L^\infty(\Omega)).
\]
In particular, there exists $C=C (\Omega,\alpha,s_0,s_1,\gamma,\int_\Omega n_0, \|n_0\|_{L^{\infty}(\Omega)},\|u_0\|_{L^2(\Omega)},\|A^\alpha u_0\|_{L^2(\Omega)},\|\nabla\Phi\|_{L^\infty(\Omega)} )>0$
such that
\[
\sup_{t>0}\Bigl(\|n(\cdot,t)\|_{L^\infty(\Omega)}+\|c(\cdot,t)\|_{W^{1,\infty}(\Omega)}+\|u(\cdot,t)\|_{L^\infty(\Omega)}\Bigr)\le C.
\]

\smallskip
\noindent\textbf{(ii) Fluid-free case.}
Assume that $\gamma>0$, $s_1\ge0$, and the system is fluid-free (i.e. $u\equiv 0 \equiv \nabla \Phi$).
Then the reduced system admits a unique global classical solution $(n,c)$ such that
\[
n\in C([0,\infty);C(\overline{\Omega}))\cap C^{2,1}(\overline{\Omega}\times(0,\infty)), \quad c\in C^{2,0}(\overline{\Omega}\times(0,\infty))
%\cap L^\infty(0,\infty;L^\infty(\Omega)),
\]
%\[
%c\in C^{2,0}(\overline{\Omega}\times(0,\infty))
%\cap L^\infty(0,\infty;W^{1,\infty}(\Omega)), 
%\]
and moreover
\[
n\in L^\infty(0,\infty;L^\infty(\Omega)), \quad 
c\in L^\infty(0,\infty;W^{1,\infty}(\Omega)).
\]
In particular, there exists $C=C (\Omega,s_0,s_1,\gamma,\int_\Omega n_0, \|n_0\|_{L^{\infty}(\Omega)} )>0$
such that
\[
\sup_{t>0}\Bigl(\|n(\cdot,t)\|_{L^\infty(\Omega)}+\|c(\cdot,t)\|_{W^{1,\infty}(\Omega)}\Bigr)\le C.
\]
\end{theorem}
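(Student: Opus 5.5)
We follow the standard route: local existence with a blow-up criterion, then a priori estimates that are uniform in time.

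\textbf{Local theory.} By a fixed-point argument in the spirit of Winkler's treatment of chemotaxis--Navier--Stokes systems, there is a unique classical solution on a maximal interval $(0,T_{\max})$. If $T_{\max}<\infty$, then
\[
\limsup_{t\nearrow T_{\max}}\Bigl(\|n(\cdot,t)\|_{L^\infty(\Omega)}+\|A^\alpha u(\cdot,t)\|_{L^2(\Omega)}\Bigr)=\infty .
\]
The solution satisfies $n>0$ and mass conservation $\int_\Omega n=\int_\Omega n_0=:m$.

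\textbf{Lower bound on $c$.} Integrating the $c$-equation gives $\int_\Omega c=m$. Because $\nabla\cdot u=0$ and $u=0$ on $\partial\Omega$, the elliptic problem $-\Delta c+u\cdot\nabla c+c=n$ obeys a comparison principle. In the fluid-free case, positivity of the Neumann Green function of $-\Delta+1$ in 2D gives $c\ge \delta(\Omega)\,m>0$.

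With fluid this is only available once $u$ is controlled, which is one reason we require $s_1>0$ in part (i). There the bound $|S|\le s_0 s_1^{-\gamma}$ holds regardless of any positive lower bound on $c$.

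\textbf{Energy for $u$.} Testing the Stokes equation by $u$ gives
\[
\tfrac12\tfrac{d}{dt}\|u\|_{L^2}^2+\|\nabla u\|_{L^2}^2\le \|\nabla\Phi\|_\infty m\,\|u\|_{L^2}\le C\|\nabla u\|_{L^2}.
\]
Hence $\|u\|_{L^2}$ is bounded and $\int_t^{t+1}\|\nabla u\|_{L^2}^2\le C$.

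\textbf{Core step: localized smallness.} We work with the weighted quantity
\[
w:=\frac{(s_1+c)^{1-\gamma}}{1-\gamma}\quad(\text{or }\log(s_1+c)\text{ if }\gamma=1),
\]
so that $|nS\nabla c|\lesssim n|\nabla w|$. Testing the $c$-equation by $(s_1+c)^{-2\gamma}\varphi^2$, with $\varphi$ a cutoff on a ball $B_r(x_0)$, makes the diffusion term produce
\[
\int (s_1+c)^{-2\gamma-1}|\nabla c|^2\varphi^2 .
\]
The source term $\int n(s_1+c)^{-2\gamma}\varphi^2$ is controlled by the local mass; in the fluid-free case it is instead absorbed using the pointwise lower bound on $c$. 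The transport term $u\cdot\nabla c$ is handled by the divergence-free structure together with the $\nabla u$ bounds, and it is exactly here that $\gamma>1/2$ makes the relevant exponents integrable.

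The aim is the statement: for every $\eta>0$ there is $r>0$ such that
\[
\sup_{x_0,t}\int_{B_r(x_0)\cap\Omega}(s_1+c)^{-2\gamma}|\nabla c|^2\le\eta .
\]
To get this we use an $\varepsilon$-regularity dichotomy in the manner of Fujie--Senba. On balls where the local mass exceeds $\varepsilon_0$, Green function estimates make $c$ large there, so the factor $(s_1+c)^{-\gamma}$ is small. On balls of small mass, elliptic estimates make $\nabla c$ small in $L^2_{\rm loc}$.

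\textbf{Bootstrap to $L^\infty$.} We next test the $n$-equation by $n\log n\,\varphi^2$ or $n^{p-1}\varphi^2$, apply a localized Gagliardo--Nirenberg inequality, and absorb the chemotactic term using the $L^2_{\rm loc}$ smallness. A finite covering of $\overline\Omega$ then yields $\sup_t\|n\|_{L^p}<\infty$ for some $p>1$, and after a further step for all $p$.

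From there the remaining bounds follow in sequence. Stokes semigroup estimates give $\|A^\alpha u\|_{L^2}\le C$, hence $u\in L^\infty$. Elliptic $W^{2,p}$ estimates give $c\in W^{1,\infty}$. A Moser iteration or Neumann heat semigroup argument gives $n\in L^\infty$. By the blow-up criterion these uniform bounds imply $T_{\max}=\infty$.

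\textbf{Expected obstacle.} The main difficulty is the localized smallness step in the fluid case: controlling $\int (u\cdot\nabla c)(s_1+c)^{-2\gamma}\varphi^2$ uniformly in time while only $\|u\|_{L^2}$ and time-averaged $\|\nabla u\|_{L^2}$ are known. Our first attempt is to integrate by parts so that the derivative falls on $\varphi$. This leaves the term
\[
\int u\cdot\nabla\varphi^2\,\frac{(s_1+c)^{1-2\gamma}}{1-2\gamma},
\]
which is bounded precisely when $\gamma>1/2$ and $s_1>0$, since then $(s_1+c)^{1-2\gamma}\le s_1^{1-2\gamma}$. This is consistent with the hypotheses of Theorem~\ref{THM1}(i).
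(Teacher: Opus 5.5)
There is a genuine gap already in your energy estimate for $u$, and it propagates into your core step. The bound $\int_\Omega n\nabla\Phi\cdot u\le\|\nabla\Phi\|_{L^\infty}m\|u\|_{L^2}$ does not hold. With $n$ controlled only in $L^1$, H\"older's inequality needs $u\in L^\infty$, and $W^{1,2}\not\hookrightarrow L^\infty$ in two dimensions.

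The paper first derives information on $n$ that does not depend on $u$. It tests the $n$-equation by $1/(n+1)$; the transport term vanishes because $\int_\Omega u\cdot\nabla\ln(n+1)=0$. The chemotactic term is bounded by $\frac{s_0^2}{2}\int_\Omega|\nabla c|^2(s_1+c)^{-2\gamma}$, which is uniformly bounded by \eqref{NABLACL2} with $r=2\gamma-1$. This gives Lemma~\ref{LEMNABLNN}, and it is the first place where $\gamma>\frac12$ and $s_1>0$ are used. The Trudinger--Moser inequality (Lemma~\ref{TMLEM}) then gives the time-averaged $L\ln L$ bound of Lemma~\ref{LEMLLNL}. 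A second application of it to $\int_\Omega n|u|$ yields Lemma~\ref{UBDD}. Your integration by parts of the transport term in the smallness step uses $\sup_t\|u\|_{L^2}$, so this cannot be postponed.

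The second gap is the localized smallness itself. Take a standard cutoff $\varphi$ supported in $B_r(x_0)$. In two dimensions both $\|\nabla\varphi\|_{L^2}$ and $\|\nabla\varphi^2\|_{L^2}$ are scale-invariant. So the cross term $\int_\Omega(s_1+c)^{1-\beta}|\nabla\varphi|^2$ and your transport bound $\lesssim s_1^{1-\beta}\|u\|_{L^2}\|\nabla\varphi^2\|_{L^2}$ stay of order one as $r\to0$. You only recover boundedness, which \eqref{NABLACL2} already gives globally.

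The missing idea is the log-log cutoff $\psi_\eta$ of Lemma~\ref{PSILEM}. It satisfies $\|\psi_\eta\|_{H^1}+\|\nabla\psi_\eta^2\|_{L^2}\to0$, while $\psi_\eta\ge1$ on a smaller ball around the center. With it, exactly your integration by parts (using that $h(c)$ is bounded because $s_1>0$ and $\beta>1$) gives Proposition~\ref{KEYPRO}.

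The Fujie--Senba dichotomy you propose instead does not close.
\begin{itemize}
\item Its small-mass half is false as stated. For $n$ concentrating near a point with any fixed mass $m>0$, $\int_{B_r}|\nabla c|^2$ becomes arbitrarily large, since the limiting profile has $|\nabla c|\approx m/(2\pi|x-x_0|)\notin L^2_{\rm loc}$.
\item Its large-mass half needs pointwise Green-function lower bounds for $-\Delta+u\cdot\nabla+1$ with $u$ only in $L^2$. This is precisely the obstruction the paper is built to avoid.
\end{itemize}

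There is also an exponent mismatch. Testing by $(s_1+c)^{-2\gamma}\varphi^2$ controls $(s_1+c)^{-2\gamma-1}|\nabla c|^2$, not your stated target $(s_1+c)^{-2\gamma}|\nabla c|^2$. For $\gamma<1$ the latter is not reachable this way, because the cross term would carry the unbounded weight $(s_1+c)^{2-2\gamma}$.

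The paper bridges two things your bootstrap step passes over: from a weight with $\beta>1$ to the sensitivity weight, and from $L^2_{\rm loc}$ smallness to the $L^4_{\rm loc}$ control of the weighted gradient. The tensor term $\int_\Omega\nabla n\cdot S\nabla c\,\varphi^4$ actually requires that $L^4_{\rm loc}$ control. The paper gets it through the splitting \eqref{SSPLIT} and the localized elliptic estimate \eqref{LEMLOCINT2}.
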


The proof of Theorem~\ref{THM1} mainly relies on a sequence of localized energy estimates. Because our model is of production type, obtaining the smallness of $\nabla c$ in $L^2_{\rm loc}(\Omega)$ as in \cite{Ahn_Kang_Lee_2023} is no longer feasible. Instead, we use the power-decay condition \eqref{KS02} of the tensor-valued sensitivity to establish the $L^{2}_{\rm loc}(\Omega)$-smallness of the weighted gradient $\frac{\nabla c}{(s_1 + c)^{(\beta+1)/2}}$ for $\beta>1$. 
This local smallness is used to obtain uniform bounds for the localized $L \ln L$-norm of $n$. By using covering arguments, these local bounds provide global-in-space estimates of $n$. These global estimates improve the regularity of $c$ and $u$, which subsequently increases the regularity of $n$. This makes it possible to apply a Moser-type iteration, eventually yielding the uniform global boundedness.

In Theorem \ref{THM3}, we establish the exponential-in-time stabilization of solutions for the fluid-free system. By establishing an interpolation inequality involving the H\"older norm (Lemma~\ref{EHRLEM}), we prove exponential convergence when either the symmetric part of $S$ is negative semi-definite or $S$ is an isotropic tensor with a small coefficient.
\begin{theorem}\label{THM3}
Let $\Omega\subset \R^{2}$ be a bounded smooth domain. Assume that $\gamma>0$, $s_1\ge0$, and the system is fluid-free (i.e. $u\equiv 0 \equiv \nabla \Phi$).
 Then, the solution $(n,c)$ obtained in Theorem~\ref{THM1} satisfies 
\[
 n(\cdot,t)\rightarrow \frac{1}{|\Omega|}\int_{\Omega}n_{0} \ \mbox{ in }L^{\infty}(\Omega),\qquad   c(\cdot,t)\rightarrow \frac{1}{|\Omega|}\int_{\Omega}n_{0}  \ \mbox{ in } W^{1,\infty}(\Omega)
\]
exponentially in time provided that $S$ satisfies one of the following:
\begin{enumerate}[(i)]
\item $\hat{S}:=\frac{S+S^{\intercal}}{2}$ is negative semi-definite (i.e. ${\rm tr} (\hat{S})\le 0$ and ${\rm det} (\hat{S})\ge 0$).
\smallskip
\item $S=\frac{s_{0}}{c^{\gamma}} \mathbb{I}$, $\gamma\ge1$, $s_{0}< s_{\star}$, and $\Omega$ is convex, where $s_{\star}=s_{\star}(\gamma,\Omega, \int_{\Omega}n_{0})>0$ is specified in \eqref{SSTAR}.
\end{enumerate}
\end{theorem}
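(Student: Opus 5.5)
The plan is to derive a differential inequality for a Lyapunov-type functional that controls the deviation of $n$ from its mean $\bar n:=\frac{1}{|\Omega|}\int_\Omega n_0$. Mass conservation gives $\int_\Omega n(\cdot,t)=\int_\Omega n_0$, and then $\int_\Omega c=\int_\Omega n$ from $-\Delta c+c=n$. Theorem~\ref{THM1}(ii) supplies the bounds $\|n\|_{L^\infty}+\|c\|_{W^{1,\infty}}\le C$ for all $t>0$. For the weight we also need $c$ bounded below. Since $-\Delta c+c=n\ge0$ with Neumann data and $\int n=\int n_0>0$, the Neumann Green function positivity in 2D gives $c\ge\delta>0$ for $t\ge1$. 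Alternatively one can argue via Harnack plus the bounds. Hence $S$ and $1/c^\gamma$ are bounded on $[1,\infty)$.

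\emph{Case (i).} Test the $n$-equation by $n-\bar n$:
\[
\frac12\frac{d}{dt}\int_\Omega (n-\bar n)^2+\int_\Omega|\nabla n|^2=\int_\Omega n\,\nabla n\cdot S\nabla c .
\]
The sign structure must come from $\hat S\le0$, and this only works if the quadratic form involves the same vector on both sides. So I would instead use the symmetric combination: test the $n$-equation by $c-\bar n$ and use the elliptic equation. Writing $n=-\Delta c+c$, the cross term becomes $-\int n\,\nabla c\cdot S\nabla c=-\int n\,\nabla c\cdot\hat S\nabla c\ge0$ with the good sign. Concretely, set $\mathcal F(t)=\int_\Omega(|\nabla c|^2+(c-\bar n)^2)=\int_\Omega (n-\bar n)(c-\bar n)$. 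Differentiating and using $c_t=(I-\Delta)^{-1}n_t$ gives
\[
\frac{d}{dt}\tfrac12\mathcal F=-\int_\Omega\nabla n\cdot\nabla c+\int_\Omega n\,\nabla c\cdot\hat S\nabla c\le-\int_\Omega|\Delta c|^2-\int_\Omega|\nabla c|^2 .
\]
This yields $\mathcal F'\le-2\mathcal F$, so $\mathcal F$ decays exponentially.

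\emph{Case (ii).} Here $S=s_0c^{-\gamma}\mathbb I$, and I would use the entropy-type functional $\int_\Omega n\ln(n/\bar n)$ or $\int (n-\bar n)^2$. The cross term $s_0\int n c^{-\gamma}\nabla n\cdot\nabla c$ is absorbed via Young's inequality. This requires $\|\nabla c\|_{L^\infty}$ and $1/c$ to be controlled by mass-dependent constants, using convexity of $\Omega$ (which makes $|\nabla c|^2$ subsolution-friendly, $\partial_\nu|\nabla c|^2\le0$). Then $s_0<s_\star$ ensures absorption into $\int|\nabla n|^2/n$. Combined with Poincaré/log-Sobolev, this gives exponential decay in $L^1$ or $L^2$.

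\emph{Upgrade to uniform norms.} Exponential $L^2$ decay of $n-\bar n$, or of $\nabla c$, must be lifted to $L^\infty$ for $n$ and $W^{1,\infty}$ for $c$. Here I invoke Lemma~\ref{EHRLEM}, which presumably states $\|f\|_{L^\infty}\le C\|f\|_{C^\theta}^{a}\|f\|_{L^2}^{1-a}$. Uniform Hölder bounds for $n$ and $\nabla c$ on $[1,\infty)$ follow from parabolic Hölder regularity, given the uniform bounds of Theorem~\ref{THM1}, together with elliptic Schauder estimates. Interpolating then turns the exponential $L^2$ decay into exponential $L^\infty$ decay. For case (i), decay of $n-\bar n$ itself follows from $\|n-\bar n\|_{(H^1)^*}$ decay interpolated against the Hölder bound. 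The main obstacle is case (ii): getting a mass-only bound on $\|\nabla c\|_{L^\infty}\,c^{-\gamma}$, i.e.\ $\gamma\ge1$ with convexity, so that $s_\star$ depends only on $\gamma,\Omega,\int n_0$.
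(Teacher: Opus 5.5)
Your Case (i) is essentially the paper's argument: the $H^1$-functional $\int_\Omega(\tilde c^2+|\nabla\tilde c|^2)$ with $\tilde c=c-\overline{n_0}$, obtained by testing the $n$-equation with $\tilde c$, together with the sign of $\int_\Omega n\nabla c\cdot\hat S\nabla c$. Two points need fixing there.
\begin{itemize}
\item The dissipation dominates $\mathcal F$ only up to the factor $\mu_1$ of Lemma~\ref{lem3prop2}, so the correct inequality is $\mathcal F'\le-2\mu_1\mathcal F$, not $\mathcal F'\le-2\mathcal F$.
\item Lemma~\ref{EHRLEM} interpolates $L^p$ against $C^\theta$, not $(H^1)^*$ against $C^\theta$. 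The paper first obtains $L^2$ decay of $\tilde n$ by testing its equation with $\tilde n$, using $|S|\le C$ (from Lemma~\ref{ULE}) and the exponential decay of $\|\nabla\tilde c\|_{L^\infty}$. Only then does it apply \eqref{ineq:full}. Your route needs an extra step, e.g.\ writing $\tilde n=\tilde c-\Delta\tilde c$ and interpolating $\tilde c$ between $H^1$ and a uniform $C^{2,\theta}$ bound.
\end{itemize}
The genuine gap is Case (ii). Absorbing $s_0\int_\Omega nc^{-\gamma}\nabla n\cdot\nabla c$ by Young's inequality leaves $\frac{s_0^2}{2}\int_\Omega nc^{-2\gamma}|\nabla c|^2$ (or the analogue with $n^2$). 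Comparing this with $\int_\Omega|\nabla n|^2/n$ puts $\|n\|_{L^\infty}$ or $\|\nabla c\|_{L^\infty}$ into the smallness condition. Neither is controlled by the mass. Convexity only yields $\partial_\nu|\nabla c|^2\le0$ on $\partial\Omega$, and with $\int_\Omega n$ fixed, a concentrating $n$ makes $\nabla c$ arbitrarily large. The only bounds you have are those of Theorem~\ref{THM1}, whose constants depend on $s_0$ and $\|n_0\|_{L^\infty}$. So your threshold is circular in $s_0$ and is not the $s_\star(\gamma,\Omega,\int_\Omega n_0)$ of \eqref{SSTAR}.

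The missing idea is to keep the same $\tilde c$-functional as in Case (i) and, in the cross term $s_0\int_\Omega c^{-\gamma}n|\nabla c|^2$, substitute $n=c-\Delta c$. Integrate $-s_0\int_\Omega c^{-\gamma}\Delta c\,|\nabla c|^2$ by parts and use $\Theta(c)=\int_1^c\tau^{-\gamma}\,d\tau$; this gives exactly
\[
\frac{s_0}{\gamma}\int_\Omega c^{1-\gamma}|D^2c|^2-\frac{s_0}{\gamma}\int_\Omega c^{1+\gamma}|D^2\Theta(c)|^2 .
\]
Lemma~\ref{cruciallem} (with $k=d=2$ and $\delta=\frac{2+\sqrt2}{2}$) bounds the subtracted integral from below by $\frac{2}{9+4\sqrt2}\int_\Omega c^{1-\gamma}|D^2c|^2$. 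Next, $\gamma\ge1$ together with Lemma~\ref{ULE} gives $c^{1-\gamma}\le(C_\Omega/\int_\Omega n_0)^{\gamma-1}$, and convexity gives $\int_\Omega|D^2c|^2\le\int_\Omega|\Delta c|^2$. The whole cross term is then bounded by multiples of $\int_\Omega|\nabla\tilde c|^2$ and $\int_\Omega|\Delta\tilde c|^2$, with constants depending only on $\gamma$, $\Omega$ and the mass. After Lemma~\ref{lem3prop2}, this is precisely what produces $s_\star$. No $L^\infty$ bound on $n$ or $\nabla c$ is ever used, and $\gamma\ge1$ enters only through the weight $c^{1-\gamma}$, not through any bound on $\|\nabla c\|_{L^\infty}c^{-\gamma}$.
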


Our paper is organized as follows. Section~\ref{S:Prelim} is devoted to preliminaries, including the local existence of solutions. In Section~\ref{S:Proof_Thm1}, we establish key estimates and provide the proof of Theorem~\ref{THM1}. Finally, in Section~\ref{S:Asympt}, we prove Theorem~\ref{THM3} on exponential-in-time stabilization.

%%%
\section{Preliminary}\label{S:Prelim}

We first recall that \eqref{KS01}--\eqref{KS02} admits a unique local-in-time classical solution.
%The problem  \eqref{KS01}--\eqref{KS02} admits local-in-time classical solutions.

\begin{lemma}\label{LWLEM} 
Let $\Omega \subset \R^{2}$ be a bounded smooth domain. Then, there exists a maximal time of existence $T_{\rm max}\in(0,\infty]$ and functions
\[
n\in C([0,T_{\rm max});C(\overline{\Omega}))\cap C^{2,1}(\overline{\Omega}\times(0,T_{\rm max})),
\qquad
c\in  C^{2,0}(\overline{\Omega}\times(0,T_{\rm max})),
\]
\[
u\in  C([0,T_{\rm max});D(A^{\alpha})) \cap C^{2,1}(\overline{\Omega}\times(0,T_{\rm max})),\qquad \pi  \in C^{1,0}(\overline{\Omega}\times(0,T_{\rm max})),
\]
 such that $n>0$ and $c>0$ in $\overline{\Omega}\times(0,T_{\rm max})$, that $(n,c,u,\pi)$  solves  \eqref{KS01}--\eqref{KS02} in the classical sense in $\Omega\times(0,T_{\rm max})$, and that
 \[
 \mbox{ if } \quad T_{\rm max}<\infty, \quad \mbox{ then }\quad  \limsup_{t\nearrow T_{\rm max}}\|n(\cdot,t)\|_{L^{\infty}}=\infty. 
 \]
Moreover, 
\[
\int_{\Omega}n(\cdot,t)=\int_{\Omega}n_{0}=\int_{\Omega}c(\cdot,t),\qquad t<T_{\rm max},
\]
and for any $\sigma_{1}\ge0$ and $r\in\R$,
\begin{equation}\label{NABLACL2}
r\int_{\Omega}\frac{|\nabla c|^{2}}{(\sigma_{1}+c)^{r+1}}(\cdot,t)+\int_{\Omega}\frac{n}{(\sigma_{1}+c)^{r}}(\cdot,t)=\int_{\Omega}\frac{c}{(\sigma_{1}+c)^{r}}(\cdot,t),\qquad t<T_{\rm max}.
\end{equation}
\end{lemma}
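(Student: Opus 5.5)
The plan is a standard fixed-point construction, in the spirit of the local theory for chemotaxis--fluid systems (Winkler; Black--Lankeit--Mizukami), followed by direct computations for the identities. The singularity of $S$ at $c=0$ when $s_1=0$ must be handled by a positive lower bound on $c$.

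\textbf{Step 1: the fixed point.} For $T\in(0,1)$ and $R>\|n_0\|_{L^\infty}+\|A^\alpha u_0\|_{L^2}+1$, I will work in the closed set
\[
X=\Bigl\{(n,u)\in C([0,T];C(\overline\Omega))\times C([0,T];D(A^\vartheta)) : \|n\|_{L^\infty}+\|A^\vartheta u\|_{L^2}\le R\Bigr\},
\]
with $\tfrac12<\vartheta<\alpha$, so that $D(A^\vartheta)\hookrightarrow L^\infty$. Given $(\bar n,\bar u)\in X$, I first solve the elliptic problem $-\Delta c+\bar u\cdot\nabla c+c=\bar n_+$ with Neumann data. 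Since $\nabla\cdot\bar u=0$, this is uniquely solvable, and elliptic $W^{2,p}$ theory bounds $c$ in $W^{1,\infty}$.

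Next I need a lower bound on $c$. Here I use $\bar n_+$ together with mass information: if $\int\bar n\ge m/2>0$, then comparison with the Neumann resolvent (strong maximum principle, Harnack) should give $c\ge\delta(R,m)>0$. This makes $S(x,\bar n,c)$ bounded and $C^2$ on the relevant range.

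I then define $\Psi(\bar n,\bar u)=(n,u)$ through the variation-of-constants formulas. For $n$ I use the Neumann heat semigroup:
\[
n=e^{t\Delta}n_0-\int_0^t e^{(t-s)\Delta}\nabla\cdot\bigl(\bar n S\nabla c+\bar u\bar n\bigr)\,ds .
\]
For $u$ I use the Stokes semigroup with forcing $\mathcal P(\bar n\nabla\Phi-(\bar u\cdot\nabla)\bar u)$. Standard smoothing estimates of the form $\|e^{t\Delta}\nabla\cdot\|_{L^p\to L^\infty}\le Ct^{-1/2-1/p}$ and $\|A^\vartheta e^{-tA}\mathcal P\|\le Ct^{-\vartheta}$ show that $\Psi$ maps $X$ into itself and is a contraction for small $T$, with $T$ depending on $R$ and $m$.

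\textbf{Step 2: regularity, positivity, extension.} Parabolic Schauder theory upgrades the fixed point to the stated classes, and the pressure is recovered in the usual way. The strong maximum principle gives $n>0$ for $t>0$, and then $c>0$ in $\overline\Omega$. Uniqueness follows from the same Gronwall-type estimates. The choice of $T$ depends only on $\|n\|_{L^\infty}$, $\|A^\alpha u\|_{L^2}$ and the mass, which is conserved. So if $n$ stays bounded, a separate estimate bounding $\|A^\alpha u\|$ in terms of $\sup\|n\|_{L^\infty}$ (energy estimate plus the 2D Navier--Stokes theory) allows extension. This yields the blow-up criterion.

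\textbf{Step 3: the identities.} Integrating the $n$-equation and using the no-flux condition, $u=0$ on $\partial\Omega$ and $\nabla\cdot u=0$ gives mass conservation. Integrating the $c$-equation gives $\int c=\int n$, since $\int u\cdot\nabla c=\int\nabla\cdot(uc)=0$.

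For \eqref{NABLACL2}, I test the $c$-equation with $(\sigma_1+c)^{-r}$; this is smooth because $c>0$, which covers $\sigma_1=0$. Integration by parts gives
\[
\int\nabla c\cdot\nabla(\sigma_1+c)^{-r}=-r\int\frac{|\nabla c|^2}{(\sigma_1+c)^{r+1}}.
\]
The transport term $\int u\cdot\nabla c\,(\sigma_1+c)^{-r}=\int u\cdot\nabla G(c)$, for an antiderivative $G$, vanishes by incompressibility and the boundary condition. Rearranging gives exactly \eqref{NABLACL2}.

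\textbf{Main obstacle.} I expect the hardest step to be the uniform positive lower bound for $c$ inside the fixed-point set, which is needed to control $S$ near $c=0$ when $s_1=0$. The difficulty is that the fixed-point iterates $\bar n$ need not conserve mass. I will enforce it by adding the constraint $\int\bar n\ge \tfrac12\int n_0$ to $X$; this is preserved for small $T$ by continuity of $\Psi(\bar n,\bar u)$ at $t=0$. I will then combine this with a pointwise lower bound for the elliptic Green function of $-\Delta+\bar u\cdot\nabla+1$ that is uniform for $\|\bar u\|_{L^\infty}\le CR$.
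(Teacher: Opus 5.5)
Your proposal is correct and follows the paper. The paper also defers local existence, positivity and the blow-up criterion to standard theory, and it proves \eqref{NABLACL2} exactly as in your Step 3: test the $c$-equation with $(\sigma_1+c)^{-r}$, remove the transport term via an antiderivative $G(c)$, and integrate by parts. Your fixed-point sketch, including the uniform lower bound on $c$ needed when $s_1=0$, only fills in details the paper leaves out, and your unspecified antiderivative is slightly cleaner than the paper's $G(c)=\int_0^c(\sigma_1+s)^{-r}\,ds$, which diverges when $\sigma_1=0$ and $r\ge1$.
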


\begin{proof}
The local existence and basic properties can be obtained by directly adapting standard theories for Keller--Segel systems. We only detail the proof of the identity \eqref{NABLACL2}.

Multiplying \eqref{KS01}$_{2}$ by $(\sigma_{1}+c)^{-r}$ for  $\sigma_{1}\ge 0$ and $r\in\mathbb{R}$, and integrating over $\Omega$, we obtain 
\begin{equation}\label{eq:NABLA-start}
\int_{\Omega} \frac{u\cdot\nabla c}{(\sigma_{1}+c)^{r}}
-\int_{\Omega} \frac{\Delta c}{(\sigma_{1}+c)^{r}}
= \int_{\Omega}\frac{n}{(\sigma_{1}+c)^{r}}-\int_{\Omega}\frac{c}{(\sigma_{1}+c)^{r}}.
\end{equation}
Then the following term vanishes, since $\nabla \cdot u =0$ in $\Omega$ and $u=0$ on $\partial \Omega$, that
\[
\int_{\Omega} \frac{u\cdot\nabla c}{(\sigma_{1}+c)^{r}} = \int_{\Omega} u \cdot \nabla G(c) =0, \quad \mbox{where } G(c):= \int_{0}^{c} (\sigma_1 + s)^{-r} \,ds.
\]
Moreover, by integration by parts with $\partial_{\nu} c =0$ on $\partial \Omega$, we have 
\[
-\int_{\Omega} \frac{\Delta c}{(\sigma_{1}+c)^{r}} = -r\int_{\Omega} \frac{|\nabla c|^2}{(\sigma_1 + c)^{r+1}}. 
\] 
Inserting them into \eqref{eq:NABLA-start} yields the desired equality \eqref{NABLACL2}. 
\end{proof}

The following uniform-in-time lower estimate can be found in \cite[Lem.~2.1]{Fujie_2014}. We will use Lemma \ref{ULE} only in the fluid-free case where the $c$-equation reduces to $-\Delta c+c=n$.
\begin{lemma}\label{ULE}
Let $\Omega\subset \R^{d}$, $d\ge1$, be a smoothly bounded domain, and let $w\in  C(\overline{\Omega})$ be a non-negative function such that $\int_{\Omega} w \,dx > 0$. If $z$ is a $ C^{2}(\overline{\Omega})$ solution to
\begin{equation}
\begin{cases}
-\Delta z + z = w & x\in\Omega, \\
\frac{\partial z}{\partial\nu} = 0 & x\in\partial\Omega,
\end{cases}
\end{equation}
then there is a positive constant $C=C(d,\Omega)>0$ such that $z$ satisfies point-wise estimate
\begin{equation}
z \ge C \int_{\Omega} w \,dx > 0 \quad \text{in } \Omega.
\end{equation}
\end{lemma}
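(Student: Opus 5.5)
The plan is to write $z$ through the Neumann Green's function of $-\Delta+1$ and to show that this kernel is bounded below by a positive constant. Let $G(x,y)$ denote the Neumann Green's function of $-\Delta+1$ on $\Omega$, so that
\[
z(x)=\int_\Omega G(x,y)w(y)\,dy .
\]
Equivalently, using the heat semigroup $e^{t\Delta}$ with homogeneous Neumann data,
\[
z=\int_0^\infty e^{-t}e^{t\Delta}w\,dt .
\]
It then suffices to find $C>0$ with $G(x,y)\ge C$ for all $x\ne y$ in $\Omega$, since $w\ge0$ gives $z(x)\ge C\int_\Omega w$ at once.

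To get the lower bound on the kernel, I use the Neumann heat kernel $p(t,x,y)$, which satisfies
\[
G(x,y)=\int_0^\infty e^{-t}p(t,x,y)\,dt .
\]
The key input is a uniform positive lower bound for $p$ at times of order one. Since $\Omega$ is bounded, smooth and connected, the Neumann heat kernel is smooth and strictly positive on $(0,\infty)\times\overline\Omega\times\overline\Omega$; positivity comes from the strong maximum principle and the Hopf lemma at the boundary. Moreover $p(t,x,y)\to1/|\Omega|$ uniformly as $t\to\infty$, by the spectral gap of the Neumann Laplacian. By continuity and compactness of $\overline\Omega\times\overline\Omega$ there is then $m>0$ with
\[
p(t,x,y)\ge m \quad\text{for } t\in[1,2],\ x,y\in\overline\Omega .
\]
Consequently $G(x,y)\ge\int_1^2 e^{-t}m\,dt=:C>0$, and $C$ depends only on $\Omega$ (and through it on $d$).

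Alternatively, I can bypass kernels with the semigroup alone. For $t\ge1$, the semigroup property $e^{t\Delta}w=e^{(t-1)\Delta}e^{\Delta}w$, combined with comparison (the Neumann semigroup preserves order), gives
\[
e^{t\Delta}w\ \ge\ \Bigl(\min_{\overline\Omega}e^{\Delta}w\Bigr)\ \ge\ m\int_\Omega w ,
\]
where the first inequality holds because constants are fixed by the Neumann semigroup. Integrating against $e^{-t}$ over $t\ge1$ then gives $z\ge e^{-1}m\int_\Omega w$.

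The main obstacle is the uniform positivity of $p(1,\cdot,\cdot)$ up to the boundary. I would obtain it from Gaussian lower bounds for Neumann heat kernels on smooth bounded domains, which hold since such domains are uniform/Lipschitz. An alternative route is a Harnack chain argument together with the boundary Hopf lemma and compactness. Finally, the $C^2$ solution $z$ coincides with the representation above by uniqueness for the Neumann problem for $-\Delta+1$, and $\int_\Omega w>0$ gives strict positivity.
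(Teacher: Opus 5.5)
Your proof is correct. Representing $z=\int_0^\infty e^{-t}e^{t\Delta}w\,dt$ and using strict positivity of the Neumann heat kernel on the compact set $[1,2]\times\overline{\Omega}\times\overline{\Omega}$ gives $z\ge m(e^{-1}-e^{-2})\int_\Omega w$; this is where connectedness of $\Omega$ is genuinely needed, and you use it correctly. The paper gives no argument of its own and only quotes \cite[Lem.~2.1]{Fujie_2014}, which to my understanding follows the same route (heat-kernel representation of $z$ plus a positive lower bound on the kernel). Note also that the Gaussian-bound discussion in your last paragraph is unnecessary, because your maximum-principle, Hopf and compactness argument already yields $m>0$.
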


We recall from \cite[Lem.~2.2]{Winkler2020} the following Trudinger-Moser type inequality.
\begin{lemma}\label{TMLEM}  
Let $\Omega\subset \mathbb{R}^{2}$ be a bounded smooth domain. For any choice of $\lambda>0$, there exists $C=C(\Omega,\lambda)>0$ such that if $0\not\equiv f\in C(\overline{\Omega})$ is nonnegative and $g\in W^{1,2}(\Omega)$, then for each $a>0$,
\[
\int_{\Omega}f|g|\le \frac{1}{a}\int_{\Omega} f\ln\left(\frac{f}{\overline f}\right)+\frac{(1+\lambda)a}{8\pi}\int_{\Omega}f\int_{\Omega}|\nabla g|^{2}+Ca\int_{\Omega}f\left(\int_{\Omega}|g|\right)^{2}\!\!+\frac{C}{a}\int_{\Omega}f,
\]
where $\overline{f}=\frac{1}{|\Omega|}\int_{\Omega} f$.
\end{lemma}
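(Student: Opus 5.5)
The approach is to obtain Lemma~\ref{TMLEM} from the classical Moser--Trudinger inequality on a two-dimensional bounded smooth domain, combined with the Fenchel--Young (entropy) duality inequality. The first step is the duality: for nonnegative $f$ with $\int_\Omega f>0$, any measurable $h$, and $\mu:=f/\int_\Omega f$,
\[
\int_\Omega f h \le \int_\Omega f\ln\Bigl(\frac{f}{\overline f}\Bigr) + \int_\Omega f\cdot \ln\Bigl(\frac{1}{|\Omega|}\int_\Omega e^{h}\Bigr).
\]
This is Jensen's inequality applied to the probability measure $\mu$ and the function $h-\ln(f/\overline f)$: $\int h\,d\mu-\int \ln(f/\overline f)\,d\mu\le \ln\int e^{h}\overline f/f\,d\mu=\ln\bigl(\tfrac{1}{|\Omega|}\int_\Omega e^{h}\bigr)$. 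I will apply it with $h=a|g|$, after dividing by $a$.

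The second step is to estimate $\ln\bigl(\frac1{|\Omega|}\int_\Omega e^{a|g|}\bigr)$. Since $|g|\in W^{1,2}$ and $|\nabla|g||\le|\nabla g|$, I may assume $g\ge0$. Split $g=(g-\bar g)+\bar g$, where $\bar g=\frac1{|\Omega|}\int_\Omega |g|$. The key input is the Moser--Trudinger inequality for smooth bounded planar domains with Neumann-type (mean-zero) functions, due to Chang--Yang: for every $\lambda>0$ there is $C_\lambda$ with
\[
\ln\Bigl(\frac1{|\Omega|}\int_\Omega e^{w}\Bigr)\le \frac{1+\lambda}{8\pi}\int_\Omega|\nabla w|^2 + C_\lambda \qquad\text{for all } w\in W^{1,2}(\Omega),\ \int_\Omega w=0.
\]
The constant $8\pi$, rather than $16\pi$, is the one valid on domains with boundary, because of boundary concentration. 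Applying this with $w=a(g-\bar g)$ gives
\[
\ln\Bigl(\frac1{|\Omega|}\int_\Omega e^{a|g|}\Bigr)\le \frac{(1+\lambda)a^2}{8\pi}\int_\Omega|\nabla g|^2 + a\bar g + C_\lambda .
\]

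The third step disposes of the linear term $a\bar g$ through Young's inequality, $a\bar g\le a^2\bar g^2+\tfrac14$. Then $\bar g^2=|\Omega|^{-2}\bigl(\int_\Omega|g|\bigr)^2$ produces the term $C a\int_\Omega f\bigl(\int_\Omega|g|\bigr)^2$ once we multiply by $\int_\Omega f$ and divide by $a$. The constants $C_\lambda+\tfrac14$ similarly yield $\frac{C}{a}\int_\Omega f$.

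Collecting the three steps: divide the duality inequality by $a$ and insert the bounds. This gives exactly
\[
\int_\Omega f|g|\le\frac1a\int_\Omega f\ln\frac{f}{\overline f}+\frac{(1+\lambda)a}{8\pi}\int_\Omega f\int_\Omega|\nabla g|^2+Ca\int_\Omega f\Bigl(\int_\Omega|g|\Bigr)^2+\frac Ca\int_\Omega f,
\]
with $C=C(\Omega,\lambda)$. The continuity of $f$ and $f\not\equiv0$ ensure that all quantities are finite, with $f\ln f$ interpreted as $0$ where $f=0$.

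The main obstacle is the sharp-constant Moser--Trudinger inequality with $8\pi$ on a domain with boundary and mean-zero normalization. It cannot be derived from routine tools; I would quote it from Chang--Yang, or equivalently from \cite[Lem.~2.2]{Winkler2020}. The remaining steps are elementary convex duality and Young's inequality.
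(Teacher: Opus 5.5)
Your proof is correct. The paper gives no proof of its own; it quotes the statement verbatim from \cite[Lem.~2.2]{Winkler2020}, and your route is the standard derivation of that cited result: the Jensen/entropy duality $\int_\Omega f h\le\int_\Omega f\ln\frac{f}{\overline f}+\int_\Omega f\,\ln\bigl(\frac{1}{|\Omega|}\int_\Omega e^{h}\bigr)$ with $h=a|g|$, then the Chang--Yang mean-zero Moser--Trudinger inequality with constant $\frac{1+\lambda}{8\pi}$, then Young's inequality on $a\bar g$ to produce the $(\int_\Omega|g|)^2$ term. Two cosmetic points: where $f$ vanishes on a set of positive measure, the Jensen step gives $\frac{1}{|\Omega|}\int_{\{f>0\}}e^{h}\le\frac{1}{|\Omega|}\int_\Omega e^{h}$ rather than an equality (harmless, since it goes the right way); and since \cite[Lem.~2.2]{Winkler2020} is the statement itself rather than the Moser--Trudinger ingredient, Chang--Yang is the proper source to quote for your second step.
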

 Recall  the following lemma from \cite[Lem.2.4]{fujie2016global}.
\begin{lemma}\label{LNIMPR}
Let $\Omega\subset \mathbb{R}^{2}$ be a bounded smooth domain.
 There exists $C=C(\Omega)>0$ such that
for any $p\ge1$, $s>1$, $\varepsilon>0$, and non-negative $f\in C^{1}(\overline{\Omega})$,
\[
\begin{aligned}
\int_{\Omega}f^{p+1}&\le C\frac{(p+1)^{2}}{\ln s}\int_{\Omega} (f\ln f+e^{-1})\int_{\Omega}f^{p-2}|\nabla f|^{2}
+(4C)^{1+\frac{\varepsilon}{2}}\bke{\int_{\Omega}f^{\frac{\varepsilon}{2}\frac{p+1}{1+\varepsilon}}  }^{\frac{2(1+\varepsilon)}{\varepsilon}}+6s^{p+1}|\Omega|.
\end{aligned}
\]
\end{lemma}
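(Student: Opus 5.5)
The plan is to split $f$ at the level $s$ and treat the two pieces separately. The region $\{f\le s\}$ is trivial. On $\{f>s\}$ we use a two-dimensional Sobolev-type estimate for a truncated power of $f$, and control the $L^1$-norm of its gradient through the factor $\int_{\{f>s\}} f$, which is small in terms of $1/\ln s$ thanks to the $f\ln f$ integral.

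\textbf{Step 1 (truncation).} Set $g:=\bigl(f^{\frac{p+1}{2}}-s^{\frac{p+1}{2}}\bigr)_+$. On $\{f>s\}$ we have $f^{p+1}\le 2g^2+2s^{p+1}$, and on $\{f\le s\}$ trivially $f^{p+1}\le s^{p+1}$. Hence
\[
\int_\Omega f^{p+1}\le 2\int_\Omega g^2+3s^{p+1}|\Omega|.
\]
The slack in the numerical constants is meant to absorb the factor $2$ later, which is what should produce the $6s^{p+1}|\Omega|$.

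\textbf{Step 2 (Sobolev in 2D).} Since $W^{1,1}(\Omega)\hookrightarrow L^2(\Omega)$ in two dimensions, we have $\|g\|_{L^2}\le C_0\bigl(\|\nabla g\|_{L^1}+\|g\|_{L^1}\bigr)$. To replace $\|g\|_{L^1}$ by the quasi-norm $\|g\|_{L^r}$ with $r=\frac{\varepsilon}{1+\varepsilon}<1$, interpolate by H\"older:
\[
\|g\|_{L^1}\le \|g\|_{L^2}^{\theta}\|g\|_{L^r}^{1-\theta},\qquad \frac1{1}=\frac{\theta}{2}+\frac{1-\theta}{r},
\]
which gives $\theta=\frac{2}{2+\varepsilon}$. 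Then apply Young's inequality to absorb $\|g\|_{L^2}^{\theta}$ into the left-hand side. This should yield
\[
\int_\Omega g^2\le C\|\nabla g\|_{L^1}^2+(4C)^{1+\frac{\varepsilon}{2}}\|g\|_{L^r}^2.
\]
The exponent $1+\frac{\varepsilon}{2}=\frac{1}{1-\theta}$ comes precisely from this Young step. Since $0\le g\le f^{\frac{p+1}{2}}$, we get
\[
\|g\|_{L^r}^2\le\Bigl(\int_\Omega f^{\frac{\varepsilon}{2}\frac{p+1}{1+\varepsilon}}\Bigr)^{\frac{2(1+\varepsilon)}{\varepsilon}},
\]
which is the second term of the claimed inequality.

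\textbf{Step 3 (gradient term).} We have $\nabla g=\frac{p+1}{2}f^{\frac{p-1}{2}}\nabla f\,\mathbf 1_{\{f>s\}}$. Writing $f^{\frac{p-1}{2}}=f^{\frac12}f^{\frac{p-2}{2}}$ and applying Cauchy--Schwarz gives
\[
\|\nabla g\|_{L^1}^2\le\frac{(p+1)^2}{4}\int_{\{f>s\}}f\int_\Omega f^{p-2}|\nabla f|^2 .
\]
Since $\ln f>\ln s>0$ on $\{f>s\}$ and $f\ln f+e^{-1}\ge0$ everywhere, we obtain
\[
\int_{\{f>s\}}f\le\frac{1}{\ln s}\int_\Omega\bigl(f\ln f+e^{-1}\bigr).
\]
Combining Steps 1--3 gives the stated inequality, after renaming $C=C(\Omega)$.

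\textbf{Main obstacle.} The delicate point is Step 2: the interpolation with the sub-unit exponent $r<1$, and tracking the constant so that it takes exactly the form $(4C)^{1+\varepsilon/2}$ with $C$ independent of $p$, $s$ and $\varepsilon$. I would first fix $C_0$ from the $W^{1,1}\hookrightarrow L^2$ embedding and then carry out Young's inequality with exponents $\frac{1}{\theta}$ and $\frac{1}{1-\theta}$ explicitly. A minor technical point is that $g$ is only Lipschitz rather than $C^1$, but it lies in $W^{1,1}$, so the embedding still applies.
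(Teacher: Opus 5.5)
\textbf{The exponent in Step 2 is wrong, and the target inequality as printed cannot hold.} The paper gives no proof of this lemma; it quotes it from \cite{fujie2016global}. So the comparison has to be with the statement itself. The rest of your argument is sound: the truncation in Step 1, the H\"older interpolation with the quasi-norm exponent $r=\frac{\varepsilon}{1+\varepsilon}$ and $\theta=\frac{2}{2+\varepsilon}$, the bound $\|g\|_{L^r}^2\le\bigl(\int_\Omega f^{\frac{\varepsilon}{2}\frac{p+1}{1+\varepsilon}}\bigr)^{\frac{2(1+\varepsilon)}{\varepsilon}}$, and the Cauchy--Schwarz and $\ln s$ estimate in Step 3 are all correct.

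The failure is the constant in Step 2. You assert that $1+\frac{\varepsilon}{2}=\frac{1}{1-\theta}$. With $\theta=\frac{2}{2+\varepsilon}$, however, $\frac{1}{1-\theta}=1+\frac{2}{\varepsilon}$, while $1+\frac{\varepsilon}{2}=\frac{1}{\theta}$. Young's inequality with exponents $\frac1\theta$ and $\frac1{1-\theta}$ places the power $\frac{1}{1-\theta}$ on the constant in front of $\|g\|_{L^r}^2$. What your argument actually produces is therefore a term of the form $(4C)^{1+\frac{2}{\varepsilon}}\bigl(\int_\Omega f^{\frac{\varepsilon}{2}\frac{p+1}{1+\varepsilon}}\bigr)^{\frac{2(1+\varepsilon)}{\varepsilon}}$, not one with $(4C)^{1+\frac{\varepsilon}{2}}$.

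More careful bookkeeping cannot repair this, because the inequality with $(4C)^{1+\frac{\varepsilon}{2}}$ is false in general. Test it with a constant function $f\equiv M$:
\begin{itemize}
\item The gradient term vanishes.
\item After dividing by $M^{p+1}|\Omega|$ and letting $M\to\infty$, the inequality forces $(4C)^{1+\frac{\varepsilon}{2}}\,|\Omega|^{1+\frac{2}{\varepsilon}}\ge 1$ for every $\varepsilon>0$.
\item When $|\Omega|<1$ this fails as $\varepsilon\to0$, however $C=C(\Omega)$ is chosen.
\end{itemize}
The same test shows that any admissible constant must be at least $|\Omega|^{-1-2/\varepsilon}$. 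This is consistent with $(4C)^{1+\frac{2}{\varepsilon}}$, which works once $4C|\Omega|\ge1$.

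So the printed statement appears to contain a misprint. Your proof establishes the corrected version with exponent $1+\frac{2}{\varepsilon}$, which is presumably the intended one. That version is all the paper needs, since the lemma is only applied with $\varepsilon=1$ in Lemma~\ref{LEM14}, where either exponent just gives a fixed constant.
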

%\begin{proof}
%Note that if $0\not\equiv f\in C(\overline{\Omega})$ is nonnegative, there is the result
% \cite[Lem.~2.2]{Winkler2020}. To extend it to the case $0\not\equiv f\in L \ln L(\Omega)$ is nonnegative, we use approximation $f_{\varepsilon}=e^{\varepsilon\Delta }f$ with Neumann heat kernel $e^{\varepsilon\Delta }$?  
%\end{proof}
%Recall the standard extension lemma for $f\in W^{1,p}(\Omega)$, $1\le p \le \infty$ (see, e.g., Evan's PDE)
%\begin{lemma}\label{EXLEM} (Extension lemma)
%Let $\Omega\subset \R^{2}$ be a bounded smooth domain and $1\le p \le \infty$. Let $D$ be a bounded open set such that $\Omega \subset \subset D$. Then, there exists an operator
%\[
%E\,:\,W^{1,p}(\Omega )\rightarrow W^{1,p}(\R^{d})
%\]
%such that for each $f\in W^{1,p}(\Omega )$:
%\begin{itemize}
%\item[(i)] $Ef=f$ a.e. in $\Omega$,
%\item[(ii)] $Ef$ has support within $D$,
%\item[(iii)] $\|Ef\|_{W^{1,p}(\R^{d})}\le C \|Ef\|_{W^{1,p}(\Omega)}$,
%\end{itemize}
%where $C$ depending only on $p$, $\Omega$, and $D$.
%\end{lemma}
Note also  from \cite[Lem.~3.4.]{Winkler2019} the following lemma. 
\begin{lemma}\label{EXPTINT}
If the nonnegative $h\in L^{1}_{\rm loc}(\mathbb{R})$ has the property that there exist $\tau>0$ and $b>0$ such that
\[
\frac{1}{\tau}\int_{t}^{t+\tau}h(s)ds\le b,\qquad t\in (t_0,T-\tau),
\]
and if
$y\in C([t_0,T))\cap C^{1}(t_0,T)$ satisfies 
\[
y'(t)+ay(t)\le h(t)
\]
 with some $a>0$, then
\[
y(t)\le y(t_{0})+\frac{b\tau}{1-e^{-a\tau}}\quad\mbox{ for all }t\in(t_{0},T).
\]
\end{lemma}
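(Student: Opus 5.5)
The statement is a Gr\"onwall-type comparison in which the forcing $h$ is controlled only through its averages over windows of length $\tau$. I would prove it with the variation-of-constants representation and a partition of the time interval into blocks of length $\tau$.

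\emph{Step 1 (integral inequality).} Since $y\in C^{1}(t_0,T)$ and $y'+ay\le h$, the function $e^{as}y(s)$ satisfies $\frac{d}{ds}\bigl(e^{as}y(s)\bigr)\le e^{as}h(s)$. Integrating from $t_0+\delta$ to $t$, letting $\delta\to0$ (using continuity of $y$ at $t_0$), and using $h\in L^1_{\rm loc}$ gives
\[
y(t)\le e^{-a(t-t_0)}y(t_0)+\int_{t_0}^{t}e^{-a(t-s)}h(s)\,ds .
\]
Since $e^{-a(t-t_0)}\le 1$, the first term is at most $y(t_0)$ when $y(t_0)\ge0$. If $y(t_0)<0$, one just keeps the factor $e^{-a(t-t_0)}y(t_0)\le 0\le y(t_0)+\ldots$; more precisely, the bound $y(t_0)+\ldots$ requires $e^{-a(t-t_0)}y(t_0)\le y(t_0)$, which fails for negative $y(t_0)$. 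In the applications $y\ge0$, so I would assume or note $y(t_0)\ge0$, or state the bound with $\max\{y(t_0),0\}$.

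\emph{Step 2 (splitting the convolution).} Fix $t\in(t_0,T)$. Partition $(t_0,t)$ backwards from $t$ into the intervals $I_k=(t-(k+1)\tau,\,t-k\tau)\cap(t_0,t)$ for $k=0,1,\dots,N$. On $I_k$ we have $e^{-a(t-s)}\le e^{-ak\tau}$, and since $h\ge0$,
\[
\int_{I_k}h \le \int_{t-(k+1)\tau}^{t-k\tau}h\le b\tau ,
\]
by the hypothesis applied at the starting point $t-(k+1)\tau$. Summing the geometric series then gives
\[
\int_{t_0}^{t}e^{-a(t-s)}h(s)\,ds\le \sum_{k\ge0}b\tau e^{-ak\tau}=\frac{b\tau}{1-e^{-a\tau}} .
\]

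\emph{Step 3 (main obstacle: the window hypothesis near the endpoints).} The hypothesis is only available for starting points in $(t_0,T-\tau)$. The window starting at $t-\tau\le T-\tau$ is fine, except that $t-\tau$ could equal $T-\tau$ only in the limit. The real difficulty is the last, incomplete block near $t_0$, whose starting point $t-(N+1)\tau$ lies below $t_0$.

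For that block I would instead use the window $(t_0+\epsilon,\,t_0+\epsilon+\tau)$, which contains the partial block up to $\epsilon$ and lies in the admissible range when $t_0+\tau<T$. Its exponential weight is at most $e^{-aN\tau}$, so it stays inside the same geometric sum. Letting $\epsilon\to0$ via monotone convergence gives $\int_{I_N}h\le b\tau$.

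If $t-t_0<\tau$, or $T-t_0\le\tau$, I would use a window starting just after $t_0$ whenever one is admissible. In the degenerate case where no window fits, the hypothesis is vacuous and the lemma as stated would fail. Since the lemma is quoted from \cite[Lem.~3.4]{Winkler2019}, I would implicitly assume $T-t_0>\tau$, or that the averaging bound extends to the endpoints as in the applications, where $h$ is defined on all of $\mathbb{R}$.

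This endpoint bookkeeping is the only delicate point. The rest is the elementary estimate above.
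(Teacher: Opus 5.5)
The paper does not prove this lemma; it quotes it from Winkler, so there is no argument in the paper to compare yours against. Your route is the natural self-contained proof, and it is correct: variation of constants, then bounding $\int_{t_0}^t e^{-a(t-s)}h(s)\,ds$ over backward blocks of length $\tau$ and summing a geometric series. The identity $\frac{b\tau}{1-e^{-a\tau}}=\sum_{k\ge0}b\tau e^{-ak\tau}$ shows this is exactly the mechanism behind the stated constant. Your handling of the last incomplete block, via a window starting just after $t_0$, is fine. A shorter alternative is to note that $s\mapsto\int_s^{s+\tau}h$ is continuous, which extends the window bound to the closed interval $[t_0,T-\tau]$ in one line.

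What writing out the proof buys you over the citation is that it exposes two hypotheses missing from the statement as quoted, and both of your caveats are genuine. First, take $h\equiv0$ and $y(t)=-Me^{-a(t-t_0)}$; the conclusion fails at any fixed $t>t_0$ once $M$ is large. So one needs $y(t_0)\ge0$, or $\max\{y(t_0),0\}$ in the bound. Second, if $T-t_0\le\tau$ the averaging hypothesis is vacuous, and $h\equiv M$ with $y(t_0)=0$ is a counterexample. So one needs $\tau<T-t_0$.

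In the paper's applications the second condition holds automatically, since $\tau=\min\{1,\frac12T_{\rm max}\}<T_{\rm max}$. The first does not. The function $\mathcal{G}$ in Lemma~\ref{LEMLOGL} can be negative at $t=0$, for example where $n_0\equiv1$ on the support of $\varphi$. So your $\max\{y(t_0),0\}$ version is precisely what is needed there, and it still gives the upper bound the paper draws.

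One small slip: the intermediate claim in your Step 1 that $e^{-a(t-t_0)}y(t_0)\le0\le y(t_0)+\dots$ is false for negative $y(t_0)$. You correct it in the next breath, and your final resolution is the right one.
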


For a spatially localized estimate, we prepare the following cut-off function.
\begin{lemma}\label{PSILEM}
Let $d\ge2$,  $\eta\in(0,e^{-1})$ and  $B_{\eta}(0)=\{ x\in\mathbb{R}^{d}\,|\, |x|<\eta\}$.
The non-negative radial   function 
\[
\psi_{\eta}(x):=
\left\{  
\begin{array}{ll}
\ln(-\ln |x| )-\ln(-\ln\eta),\qquad &x\in B_{\eta}(0)\setminus\{0\},\vspace{1mm}
\\
0,  &{\rm otherwise},
\end{array}
\right.
\]
satisfies $\psi_{\eta}\nearrow \infty$  as $r:=|x|\rightarrow 0$, and 
\[
\|\psi_{\eta}\|_{H^{1}(\mathbb{R}^{d})}+\|\nabla  \psi_{\eta}^{2} \|_{L^{2}(\mathbb{R}^{d})}\rightarrow 0\quad\mbox{ as }\quad\eta\rightarrow 0. 
\]
 \end{lemma}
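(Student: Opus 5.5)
Since $\psi_\eta$ is radial, I would prove the lemma by direct computation in the radial variable $r=|x|$. Set $L:=-\ln\eta>1$, so $\eta\to0$ means $L\to\infty$. On $B_\eta(0)\setminus\{0\}$ we have $-\ln r\ge L>1$, so $\psi_\eta(x)=\ln(-\ln r)-\ln L\ge 0$, with $\psi_\eta\to\infty$ as $r\to0$. The function $\psi_\eta$ is also strictly decreasing in $r$. It vanishes continuously on $\partial B_\eta$, hence lies in $W^{1,1}_{\rm loc}$, and its weak gradient is the pointwise one (the singularity at the origin is removable since $d\ge2$; see below). A direct differentiation gives
\[
\nabla\psi_\eta(x)=\frac{1}{r\ln r}\,\frac{x}{r},\qquad |\nabla\psi_\eta|=\frac{1}{r(-\ln r)},\qquad 0<r<\eta .
\]

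For the $H^1$ norm, I would write the integrals in polar coordinates with $\omega_{d-1}=|S^{d-1}|$:
\[
\|\nabla\psi_\eta\|_{L^2}^2=\omega_{d-1}\int_0^\eta \frac{r^{d-3}}{(\ln r)^2}\,dr .
\]
For $d=2$, the substitution $s=-\ln r$ gives $\omega_1\int_L^\infty s^{-2}\,ds=2\pi/L\to0$. For $d\ge3$, the integrand is at most $r^{d-3}/L^2$, which gives a bound $\le C\eta^{d-2}/L^2\to0$. For $\|\psi_\eta\|_{L^2}^2=\omega_{d-1}\int_0^\eta(\ln(-\ln r)-\ln L)^2r^{d-1}dr$, I would substitute $r=e^{-s}$, turning it into $\omega_{d-1}\int_L^\infty(\ln(s/L))^2e^{-ds}\,ds$. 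Then $t=s/L$ gives $L\int_1^\infty(\ln t)^2e^{-dLt}\,dt\le L\int_1^\infty t^2e^{-dLt}\,dt\to0$, and the decay is exponential.

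For $\nabla\psi_\eta^2=2\psi_\eta\nabla\psi_\eta$,
\[
\|\nabla\psi_\eta^2\|_{L^2}^2=4\omega_{d-1}\int_0^\eta \frac{(\ln(-\ln r)-\ln L)^2}{r^2(\ln r)^2}\,r^{d-1}\,dr .
\]
For $d=2$, with $s=-\ln r$ this becomes $8\pi\int_L^\infty (\ln(s/L))^2s^{-2}\,ds$, and $t=s/L$ turns it into $\frac{8\pi}{L}\int_1^\infty(\ln t)^2t^{-2}\,dt=\frac{16\pi}{L}\to0$, since $\int_1^\infty(\ln t)^2t^{-2}dt=2$. For $d\ge3$, the extra factor $r^{d-2}=e^{-(d-2)s}$ only improves the convergence, so the same substitution again gives a bound that tends to $0$.

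The one genuinely delicate point is the case $d=2$, where $\psi_\eta$ is unbounded and the integrals are borderline. This is where the double-logarithm choice matters: the substitution $t=s/L$ must produce the factor $1/L$ in both gradient integrals. A second point is checking that the computed gradient is the distributional one despite the singularity at the origin. For this I would cut out a ball $B_\delta(0)$ and integrate by parts against a test function $\varphi$. The boundary term is bounded by $\|\varphi\|_\infty\,\psi_\eta(\delta)\,\omega_{d-1}\delta^{d-1}=O(\delta^{d-1}\ln\ln(1/\delta))$, which tends to $0$ as $\delta\to0$. Together with the $L^1$ integrability of $\nabla\psi_\eta$ and $\nabla\psi_\eta^2$, this identifies the weak gradients.
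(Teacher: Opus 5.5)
Your proof is correct and follows essentially the same route as the paper: a direct polar-coordinate computation using the substitution $s=-\ln r$; the paper does it in two steps, $z=\ln r$ and then $\tau=\ln(-z)$. There are three differences. The paper cites an earlier result of Ahn, Kang, and Lee for $\|\psi_\eta\|_{H^1}\to0$, whereas you compute it directly. For $\nabla\psi_\eta^2$, the paper uses the cruder bound $\psi_\eta\le\ln(-\ln r)$ to reduce to the tail $\int_{\ln(-\ln\eta)}^\infty\tau^2e^{-\tau}\,d\tau$; you keep the shift by $\ln L$ and rescale $t=s/L$, which gives the sharper rate $16\pi/L$ in $d=2$. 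Finally, you also check that the pointwise gradient is the weak one, a step the paper omits.
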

\begin{proof}
In \cite[Prop.~3.1.]{Ahn_Kang_Lee_2023}, $
\|\psi_{\eta}\|_{H^{1}(\mathbb{R}^{d})}\rightarrow 0$  as $\eta\rightarrow 0$ has been shown. It is easy to see that $\psi_{\eta}\nearrow \infty$  as $|x|\rightarrow 0$. 
Now, it remains to show
\[
\|\nabla \psi_{\eta}^{2}\|_{L^{2}(\mathbb{R}^{d})}\rightarrow 0\quad\mbox{ as }\quad\eta\rightarrow 0. 
\]
We denote the surface area of $B_{1}(0)$  as  $\sigma_{d}$.
A direct computation yields that   
\begin{align*}
\begin{aligned}
\|\nabla \psi_{\eta}^{2}\|_{L^{2}(\mathbb{R}^{d})}^{2}=\|\partial_{r} (\psi_{\eta}^{2})\|_{L^{2}(\mathbb{R}^{d})}^{2}
&=4\sigma_{d}\int_{0}^{\eta} \psi_{\eta}^{2} \psi_{\eta r}^{2}r^{d-1}dr
\\&\le 4\sigma_{d}\int_{0}^{\eta}\frac{|\ln(-\ln r)|^{2}}{|r\ln r|^{2}}r^{d-1}dr 
\\&=4\sigma_{d}\int_{-\infty}^{\ln\eta}\frac{|\ln(-z)|^{2}}{z^{2}}e^{(d-2)z}dz
\\&=4\sigma_{d}\int_{\ln(-\ln\eta)}^{\infty} \frac{\tau^{2}}{e^{\tau}}e^{-(d-2)e^{\tau}}d\tau
\\&\le 4\sigma_{d}\int_{\ln(-\ln\eta)}^{\infty} \frac{\tau^{2}}{e^{\tau}}d\tau
\end{aligned}
\end{align*}
and the rightmost term tends to $0$ as $\eta\rightarrow 0$. This completes the proof.
\end{proof}

%%%
\section{Global Existence and Boundedness: Proof of Theorem~\ref{THM1}}\label{S:Proof_Thm1}

In this section, we prove Theorem~\ref{THM1}. We begin with key integral estimates derived from \eqref{KS01}.

In the subsequent lemmas and proofs, a constant $C>0$ denotes a generic constant which may vary from line to line and depends only on the prescribed data.

\begin{lemma}\label{LEMNABLNN}
Let $\gamma>\frac{1}{2}$ and $s_{1}>0$.
There exists $C=C(\Omega, \gamma, s_{0}, s_{1}, \int_{\Omega}n_{0})>0$ satisfying
\[
\int_{t}^{t+\tau}\int_{\Omega} |\nabla \ln(n+1)|^{2}\le C\quad\mbox{ for all }t\in(0,T_{\rm max}-\tau),
\]
where $\tau:=\min\{1,\frac{1}{2}T_{\rm max}\}$.
\end{lemma}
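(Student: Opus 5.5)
Test the $n$-equation of \eqref{KS01} against $\frac{1}{n+1}$, i.e. compute $\frac{d}{dt}\int_\Omega \ln(n+1)$. The transport term drops out: since $\nabla\cdot u=0$ and $u=0$ on $\partial\Omega$, we have $\int_\Omega \frac{u\cdot\nabla n}{n+1}=\int_\Omega u\cdot\nabla\ln(n+1)=0$. Integrating by parts with the no-flux condition \eqref{KS_boundary} then gives
\[
\frac{d}{dt}\int_\Omega \ln(n+1)=\int_\Omega |\nabla\ln(n+1)|^2-\int_\Omega \frac{n}{(n+1)^2}\,\nabla n\cdot S\nabla c .
\]
We use Young's inequality together with $\frac{n}{n+1}\le1$ and \eqref{KS02} to bound the cross term:
\[
\Bigl|\int_\Omega \frac{n}{(n+1)^2}\nabla n\cdot S\nabla c\Bigr|\le \frac12\int_\Omega|\nabla\ln(n+1)|^2+\frac{s_0^2}{2}\int_\Omega\frac{|\nabla c|^2}{(s_1+c)^{2\gamma}} .
\]
Integrating in time over $(t,t+\tau)$ and using $0\le\ln(n+1)\le n$ together with mass conservation from Lemma~\ref{LWLEM}, we obtain
\[
\frac12\int_t^{t+\tau}\!\!\int_\Omega|\nabla\ln(n+1)|^2\le \int_\Omega n_0+\frac{s_0^2}{2}\int_t^{t+\tau}\!\!\int_\Omega\frac{|\nabla c|^2}{(s_1+c)^{2\gamma}} .
\]
Note that the sign works in our favour: the favourable gradient term appears as a time derivative of a quantity bounded by the mass.

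It therefore remains to bound $\int_\Omega\frac{|\nabla c|^2}{(s_1+c)^{2\gamma}}$ uniformly in time, and this is where $\gamma>\frac12$ and $s_1>0$ are used. Apply the identity \eqref{NABLACL2} with $\sigma_1=s_1$ and $r=2\gamma-1>0$:
\[
(2\gamma-1)\int_\Omega\frac{|\nabla c|^2}{(s_1+c)^{2\gamma}}=\int_\Omega\frac{c}{(s_1+c)^{2\gamma-1}}-\int_\Omega\frac{n}{(s_1+c)^{2\gamma-1}}\le \int_\Omega\frac{c}{(s_1+c)^{2\gamma-1}}.
\]
To bound the right-hand side, split according to whether $2\gamma-1\le1$ or $>1$. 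In either case
\[
\frac{c}{(s_1+c)^{2\gamma-1}}\le s_1^{-(2\gamma-1)}c \qquad \text{or}\qquad \frac{c}{(s_1+c)^{2\gamma-1}}\le (s_1+c)^{2-2\gamma}\le s_1+c+1,
\]
so the integral is at most $C(s_1,\gamma)\bigl(\int_\Omega c+|\Omega|\bigr)=C\bigl(\int_\Omega n_0+|\Omega|\bigr)$ by Lemma~\ref{LWLEM}. This yields a pointwise-in-time bound, and integrating over an interval of length $\tau\le1$ gives the claim.

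The main point to verify is the use of \eqref{NABLACL2} with the fluid present. It is valid, since its proof already handles the transport term, and the positivity $r=2\gamma-1>0$ is exactly what makes the weighted Dirichlet integral controllable; $s_1>0$ keeps the weights nonsingular since no lower bound on $c$ is available in the fluid case. A minor technical issue is the justification of differentiating $\int_\Omega\ln(n+1)$ near $t=0$. This is harmless since $t>0$ and the solution is classical there, with continuity at $t=0$ from Lemma~\ref{LWLEM}.
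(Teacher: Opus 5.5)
Your proposal is correct and follows the paper's proof essentially step for step. You test the $n$-equation by $\frac{1}{n+1}$, absorb the chemotactic cross term via Young's inequality, bound $\int_\Omega \ln(n+1)$ by the mass, and control $\int_\Omega \frac{|\nabla c|^2}{(s_1+c)^{2\gamma}}$ through \eqref{NABLACL2} with $(\sigma_1,r)=(s_1,2\gamma-1)$. Your final case distinction is unnecessary, because $(s_1+c)^{2\gamma-1}\ge s_1^{2\gamma-1}$ gives $\frac{c}{(s_1+c)^{2\gamma-1}}\le s_1^{1-2\gamma}c$ for every $\gamma>\frac12$, which is exactly the bound the paper uses.
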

\begin{proof}
We divide $\eqref{KS01}_{1}$ by $n+1$ and integrate over $\Omega$, then it follows by \eqref{KS02} and Young's inequality,  
%Using $\eqref{KS01}_{1}$ and Young's inequality, we compute $-\frac{d}{dt}\int_{\Omega}\ln(n+1)$  as
\[
\begin{aligned}
-\frac{d}{dt}\int_{\Omega}\ln(n+1)+\int_{\Omega} |\nabla \ln(n+1)|^{2}
&\le s_{0}\int_{\Omega} \frac{n |\nabla n| |\nabla c|}{(n+1)^{2}(s_{1}+c)^{\gamma}}
\\&\le \frac{1}{2}\int_{\Omega}|\nabla \ln(n+1)|^{2}+\frac{s_{0}^{2}}{2}\int_{\Omega}\frac{|\nabla c|^{2}}{(s_{1}+c)^{2\gamma}}.
\end{aligned}
\]
The first term on RHS is absorbed by the second term on LHS. 
Integrating this inequality over $[t,t+\tau]$ yields the following, 
\[
\begin{aligned}
\int_{t}^{t+\tau}\int_{\Omega} |\nabla \ln(n+1)|^{2}
&\le 2\int_{\Omega}\ln(n+1)(\cdot,t+\tau)+  \frac{s_{0}^{2}}{2}\int_{t}^{t+\tau}\int_{\Omega}\frac{|\nabla c|^{2}}{(s_{1}+c)^{2\gamma}}
\\&\le 2\int_{\Omega}n_{0}+ \frac{s_{0}^{2}}{2}\int_{t}^{t+\tau}\int_{\Omega}\frac{|\nabla c|^{2}}{(s_{1}+c)^{2\gamma}},
\end{aligned}
\]
since $\ln (a+1) \le a$ for all $a\ge0$ and $\int_{\Omega}n(\cdot, t)=\int_{\Omega}n_{0}$. 
Finally, because $\gamma>\frac{1}{2}$ and $s_{1}>0$, choosing $\sigma_1 = s_1$ and $r=2\gamma -1 >0$ in  \eqref{NABLACL2}, and $\int_{\Omega}c=\int_{\Omega}n_{0}$ gives 
\[
(2\gamma -1)\int_{\Omega}\frac{|\nabla c|^{2}}{(s_{1}+c)^{2\gamma}}
\le \int_{\Omega}\frac{c}{(s_{1}+c)^{2\gamma-1}}
\le \frac{1}{s_{1}^{2\gamma-1}} \int_{\Omega} c 
\le \frac{1}{s_{1}^{2\gamma-1}} \int_{\Omega}n_{0}.
\]
This entails the desired result. 
\end{proof}

The combination of a uniform estimate Lemma~\ref{LEMNABLNN} and Trudinger--Moser type inequality in Lemma~\ref{TMLEM}, we obtain a uniform bound for the space-time integrability of $n\ln(n+1)$.

\begin{lemma}\label{LEMLLNL}
Let $\gamma>\frac{1}{2}$ and $s_{1}>0$.
There exists $C=C(\Omega, \gamma, s_{0}, s_{1}, \int_{\Omega}n_{0})>0$ satisfying
\[
\int_{t}^{t+\tau}\int_{\Omega} n \ln(n+1) \le C\quad\mbox{ for all }t\in(0,T_{\rm max}-\tau),
\]
where $\tau:=\min\{1,\frac{1}{2}T_{\rm max}\}$.
\end{lemma}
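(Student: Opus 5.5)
We apply Lemma~\ref{TMLEM} with $f=n+1$ and $g=\ln(n+1)$, so that the left-hand side is exactly $\int_\Omega (n+1)\ln(n+1)\ge \int_\Omega n\ln(n+1)$. The gradient term then becomes $\int_\Omega|\nabla \ln(n+1)|^2$, which is controlled in time average by Lemma~\ref{LEMNABLNN}. The entropy term $\int_\Omega f\ln(f/\overline f)$ reproduces essentially the quantity we want to bound, so the parameter $a$ must be taken large enough to absorb it.

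\textbf{Step 1: the pointwise-in-time inequality.} Write $M:=\int_\Omega n_0$, so that $\int_\Omega f=M+|\Omega|$ and $\overline f=(M+|\Omega|)/|\Omega|\ge 1$. Then
\[
\int_\Omega f\ln\Bigl(\frac{f}{\overline f}\Bigr)=\int_\Omega f\ln f-\int_\Omega f\ln\overline f\le \int_\Omega f\ln f .
\]
For the $L^1$ term, $\ln(n+1)\le n$ gives $\int_\Omega|g|\le M$. Fix $\lambda=1$ and $a=2$ in Lemma~\ref{TMLEM}, with $C$ the corresponding constant. This yields, for each $t<T_{\rm max}$,
\[
\int_\Omega (n+1)\ln(n+1)\le \frac12\int_\Omega (n+1)\ln(n+1)+\frac{M+|\Omega|}{2\pi}\int_\Omega|\nabla\ln(n+1)|^2+C(M+|\Omega|)\Bigl(2M^2+\tfrac12\Bigr).
\]

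\textbf{Step 2: absorb and integrate in time.} The first term on the right is finite for each fixed time, since $n(\cdot,t)$ is continuous on $\overline\Omega$, so it can be absorbed into the left-hand side. This gives
\[
\int_\Omega n\ln(n+1)\le C_1\int_\Omega|\nabla\ln(n+1)|^2+C_2 ,
\]
with $C_1,C_2$ depending only on $\Omega$ and $M$. Integrating over $[t,t+\tau]$ with $\tau\le1$ and invoking Lemma~\ref{LEMNABLNN} gives the claim, with a constant depending on $\Omega,\gamma,s_0,s_1,M$.

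\textbf{Checks and main obstacle.} The main point to verify is that the hypotheses of Lemma~\ref{TMLEM} apply at each time $t\in(0,T_{\rm max})$. This holds because $f=n+1$ is continuous, positive and nontrivial, and $g=\ln(n+1)\in W^{1,2}(\Omega)$ by the regularity in Lemma~\ref{LWLEM}. Beyond that, the only care needed is the bookkeeping of the entropy term. Since $\overline f\ge1$, there is no sign issue, and $a=2$ leaves a factor $\tfrac12$ on the left-hand side after absorption.
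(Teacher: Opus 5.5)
Your proof is correct and follows the paper's argument: apply Lemma~\ref{TMLEM} with $g=\ln(n+1)$ and $a=2$, absorb the entropy term into the left-hand side, bound $\int_\Omega \ln(n+1)\le \int_\Omega n_0$, and integrate in time using Lemma~\ref{LEMNABLNN}. The only difference is cosmetic. You take $f=n+1$, so that $\overline{f}\ge 1$ and the $\ln\overline{f}$ term has a favorable sign. The paper instead takes $f=n$, absorbs $\frac12\int_\Omega n\ln n$ via $n\ln n\le n\ln(n+1)$, and treats $-\frac12(\ln\overline{n})\int_\Omega n$ as a constant.
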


\begin{proof}
Note that $\int_{\Omega} n (\cdot, t) = \int_{\Omega} n_0$. Apply Lemma~\ref{TMLEM} with $f=n$, $g=\ln(n+1)$, and $a=\lambda=2$. Then we have the following, for $\overline{n}=\frac{1}{|\Omega|}\int_\Omega n$, that 
\[
\begin{aligned}
\int_{\Omega} n \ln(n+1)
&\le \frac{1}{2}\int_{\Omega} n\ln n-\frac{1}{2}(\ln\overline{n})\int_{\Omega} n + \frac{3}{4\pi} \int_{\Omega}n \int_{\Omega}|\nabla \ln(n+1)|^{2}
\\&
\quad+2C\int_{\Omega}n\left(\int_{\Omega}\ln(n+1)\right)^{2}+\frac{C}{2}\int_{\Omega}n.
\end{aligned}
\]
Using $a\ln a\le a\ln (a+1)$, for $a\geq 0$, the first term on RHS is absorbed to the LHS. Also, using $\ln (a+1)\le a$, for $a\ge0$ and the mass conservation property of $n$ yields that there is a constant $C>0$ such that 
\[
\int_{\Omega} n \ln(n+1)
\le C\int_{\Omega}|\nabla \ln(n+1)|^{2} + C.
\]
We complete the proof by integrating this inequality over $[t,t+\tau]$ and using Lemma~\ref{LEMNABLNN}. 
\end{proof}

With the $L^1$-bound for $n\ln(n+1)$ Lemma~\ref{LEMLLNL}, we show that $u$ belongs to the energy class $L^\infty_t L^2_x \cap L^2_t W^{1,2}_x$.

\begin{lemma}\label{UBDD}
Let $\gamma>\frac{1}{2}$ and $s_{1}>0$. 
There exists $C=C(\Omega, \gamma, s_0, s_1, \int_{\Omega}n_0, \|u_0\|_{L^2(\Omega)}, \|\nabla \Phi\|_{L^{\infty}(\Omega)})>0$ such that
\begin{equation}\label{UBDD01}
\sup_{t<T_{\rm max}}\int_{\Omega}|u(\cdot,t)|^{2}\le C,
\end{equation}
and 
\begin{equation}\label{UBDD02}
\int_{t}^{t+\tau}\int_{\Omega}|\nabla u|^{2}\le  C\quad\mbox{ for all }\quad t\in(0,T_{\rm max}-\tau),
\end{equation}
where $\tau:=\min\{1,\frac{1}{2}T_{\rm max}\}$.
\end{lemma}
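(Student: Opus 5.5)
We test the Navier--Stokes equation $\eqref{KS01}_3$ against $u$ and integrate over $\Omega$. Since $\nabla\cdot u=0$ and $u=0$ on $\partial\Omega$, both the convective term $\int_\Omega (u\cdot\nabla)u\cdot u$ and the pressure term $\int_\Omega \nabla\pi\cdot u$ vanish. This leaves
\[
\frac12\frac{d}{dt}\int_\Omega |u|^2+\int_\Omega|\nabla u|^2=\int_\Omega n\,u\cdot\nabla\Phi\le \|\nabla\Phi\|_{L^\infty}\int_\Omega n|u|.
\]
The task is to control $\int_\Omega n|u|$ using only the space-time $L\ln L$ information from Lemma~\ref{LEMLLNL}, which is all we have on $n$ so far.

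For this we apply the Trudinger--Moser type inequality of Lemma~\ref{TMLEM} with $f=n$, $g=|u_i|$ (each component lies in $W^{1,2}_0$, and $|\nabla|u_i||\le|\nabla u_i|$), and fixed $\lambda=1$. The parameter $a>0$ is chosen small enough that
\[
\frac{2a}{8\pi}\|\nabla\Phi\|_{L^\infty}\int_\Omega n_0\le\frac14 .
\]
This gives
\[
\|\nabla\Phi\|_{L^\infty}\int_\Omega n|u|\le \frac12\int_\Omega|\nabla u|^2+C\int_\Omega n\ln\frac{n}{\overline n}+C\Bigl(\int_\Omega|u|\Bigr)^2+C .
\]
We then estimate:
\begin{itemize}
\item $\bigl(\int_\Omega|u|\bigr)^2\le |\Omega|\int_\Omega|u|^2\le C_P\int_\Omega|\nabla u|^2$ by Poincaré's inequality (using $u=0$ on $\partial\Omega$), so this term is absorbed provided its coefficient $Ca\int_\Omega n_0\, C_P|\Omega|$ is small. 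This holds after shrinking $a$ further; the needed $a$ depends only on $\Omega$, $\int_\Omega n_0$ and $\|\nabla\Phi\|_{L^\infty}$.
\item $\int_\Omega n\ln(n/\overline n)\le \int_\Omega n\ln(n+1)+|\ln\overline n|\int_\Omega n_0$.
\end{itemize}
Together these give
\[
\frac{d}{dt}\int_\Omega|u|^2+\frac12\int_\Omega|\nabla u|^2\le C\int_\Omega n\ln(n+1)+C .
\]

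Using Poincaré once more, $\frac12\int_\Omega|\nabla u|^2\ge a_0\int_\Omega|u|^2$ for some $a_0>0$. Setting $y(t)=\int_\Omega|u(\cdot,t)|^2$ and $h=C\int_\Omega n\ln(n+1)+C$, we get $y'+a_0y\le h$. Lemma~\ref{LEMLLNL} gives $\frac1\tau\int_t^{t+\tau}h\le b$ with $b$ depending on the data, since $\tau\le 1$. Lemma~\ref{EXPTINT} with $t_0=0$ then yields
\[
y(t)\le \|u_0\|_{L^2}^2+\frac{b\tau}{1-e^{-a_0\tau}} .
\]
A caveat: $b\tau/(1-e^{-a_0\tau})$ remains bounded as $\tau\to0$ only if $b$ is taken as the bound on $\int_t^{t+\tau}h$ divided by $\tau$. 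For $\tau=1$ this is fine. If $T_{\rm max}<2$ we argue directly on a finite interval: integrating the differential inequality on $(0,T_{\rm max})$, covered by at most two intervals of length $\tau$, gives a bound depending only on the data. This proves \eqref{UBDD01}.

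For \eqref{UBDD02}, we integrate the differential inequality over $[t,t+\tau]$:
\[
\frac12\int_t^{t+\tau}\int_\Omega|\nabla u|^2\le y(t)+C\int_t^{t+\tau}\int_\Omega n\ln(n+1)+C\tau,
\]
which is bounded by \eqref{UBDD01} and Lemma~\ref{LEMLLNL}.

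The main obstacle is the coupling term $\int_\Omega n|u|$: $n$ is only known in time-averaged $L\ln L$, so no Hölder pairing closes. The Trudinger--Moser duality (Lemma~\ref{TMLEM}), with the smallness in $a$ absorbing both the gradient and the mean term, is the key device.
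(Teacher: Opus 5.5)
Your proof is correct and follows the paper's argument: you test by $u$, apply Lemma~\ref{TMLEM} with $f=n$ and small $a$ so that both the gradient term and the $\bigl(\int_\Omega|u|\bigr)^2$ term (via Poincar\'e) are absorbed, close with Lemma~\ref{EXPTINT}, and integrate over $[t,t+\tau]$. Applying the inequality componentwise with $g=|u_i|$, and treating $T_{\rm max}<2$ separately (where the constant in Lemma~\ref{EXPTINT} would otherwise degenerate as $\tau\to0$), are minor refinements of points the paper passes over quickly.
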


\begin{proof}
Testing $\eqref{KS01}_{3}$ by $u$ and integrating over $\Omega$, it follows that, using $\nabla\cdot u=0$ in $\Omega$ and $u=0$ on $\partial\Omega$, 
\[
\frac12\frac{d}{dt}\int_\Omega |u|^2 + \int_\Omega |\nabla u|^2
= \int_\Omega n\,\nabla\Phi\cdot u
\le \|\nabla\Phi\|_{L^\infty(\Omega)}\int_\Omega n|u|.
\] 
Applying Lemma~\ref{TMLEM} with $f=n$ and $g=|u|$, $\lambda =1$ and $a>0$ to be chosen later, we have
\[
\int_{\Omega}n |u|
\le   \frac{1}{a}\int_{\Omega} n\ln\left(\frac{n}{\overline{n_0}}\right)+\frac{a}{4\pi}\int_{\Omega}n_0\int_{\Omega}|\nabla u|^{2}+Ca\int_{\Omega}n_0\left(\int_{\Omega}|u|\right)^{2}+\frac{C}{a}\int_{\Omega}n_0.
\]
for a positive constant $C$. Moreover, by H\"{o}lder inequality and Poincar\'e's inequality, we have 
\[
\left(\int_\Omega |u|\right)^2 \le |\Omega|\,\|u\|_{L^2(\Omega)}^2 \le C(\Omega)\int_\Omega |\nabla u|^2.
\]
Moreover, as in the proof of Lemma~\ref{LEMLLNL}, there is $C=C(\Omega, \int_{\Omega}n_0)>0$ such that  
\[
\int_\Omega n\ln\left(\frac{n}{\overline{n_0}}\right)
\le \int_\Omega n\ln(n+1) + C.
\]
Combination of all estimates above using the mass conservation property of $n$ yields the following 
\[
\frac12\frac{d}{dt}\int_\Omega |u|^2 + \int_\Omega |\nabla u|^2
\le \|\nabla\Phi\|_{L^{\infty}(\Omega)} 
\left[
\frac{1}{a}\int_\Omega n\ln(n+1) + C a\,\int_{\Omega}n_0 \int_\Omega |\nabla u|^2 + \frac{C}{a}
\right].
\]
Choose $a>0$ sufficiently small that $\|\nabla\Phi\|_{L^{\infty}(\Omega)} C a\,\int_{\Omega}n_0 <\frac{1}{2}$, then we can absorb the term involving $\int_{\Omega}|\nabla u|^2$ into the LHS and have 
\begin{equation}\label{MAR63}
\frac{d}{dt}\int_\Omega |u|^2 + \frac{1}{C}\int_\Omega |\nabla u|^2
\le C\left(1+\int_\Omega n\ln(n+1)\right),
\end{equation}
for a constant $C=C(\Omega, \int_{\Omega} n_0, \|\nabla \Phi\|_{L^{\infty}(\Omega)})>0$. 
By Poincar\'e's inequality, $\int_\Omega |\nabla u|^2 \ge C(\Omega)\int_\Omega |u|^2$, and Lemma~\ref{LEMLLNL}, a uniform bound \eqref{UBDD01} follows by applying Lemma~\ref{EXPTINT} with $C$ additionally depending on $\|u_0\|^{2}_{L^2(\Omega)}$. 
Integrating \eqref{MAR63} over $[t,t+\tau]$ and using \eqref{UBDD01} together with Lemma~\ref{LEMLLNL}, we obtain \eqref{UBDD02}.
\end{proof}

 Next, we establish a uniform integrability property for the weighted gradient density
\[
\frac{|\nabla c(\cdot,t)|^{2}}{(s_{1}+c(\cdot,t))^{\beta+1}}
\quad \mbox{for } t\in(0,T_{\max})  \mbox{ and }  \beta>1.
\]
More precisely, we show that this family is equi-integrable in space, uniformly in time,
which will later rule out concentration of the weighted energy near points.

%Next, we prove the equi-integrability of 
%\[
%\bket{\int_{\Omega} \frac{|\nabla  c|^{2}}{(s_{1}+c)^{\beta+1}}(\cdot,t) \,\Bigr{|}\, t\in(0,T_{\rm max})}
%\]
%when $\beta>1$.

\begin{proposition}\label{KEYPRO}   Let $\beta>1$. Assume that either $s_{1}>0$, or $s_{1}\ge0$ and the system is fluid-free. Then, for any $\varepsilon>0$, there exists $\delta_{\varepsilon}\in(0, e^{-1})$ independent of $q\in\overline{\Omega}$ such that
\[
\sup_{t<T_{\rm max}}\int_{\Omega\cap B_{\delta}(q)}\frac{|\nabla c|^{2}}{(s_{1}+c)^{\beta+1}} (\cdot,t)\le \varepsilon\quad\mbox{ for }\quad \delta\in(0,\delta_{\varepsilon}).
\]
\end{proposition}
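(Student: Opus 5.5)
The plan is to test the elliptic equation $\eqref{KS01}_2$ with a localized, singular weight built from the cut-off function of Lemma~\ref{PSILEM}. This mirrors the derivation of \eqref{NABLACL2} with $r=\beta$, but with a spatial cut-off inserted. Fix $q\in\overline{\Omega}$, $\eta\in(0,e^{-1})$, and put $\psi:=\psi_\eta(\cdot-q)$, restricted to $\Omega$. Since every constant coming from Lemma~\ref{PSILEM} is translation invariant, all bounds below will be independent of $q$.

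I multiply $\eqref{KS01}_2$ by $(s_1+c)^{-\beta}\psi^2$ and integrate over $\Omega$. The Neumann condition on $c$ and $u=0$ on $\partial\Omega$ kill all boundary terms, even though $\psi$ does not vanish on $\partial\Omega$. This gives
\[
\beta\int_\Omega\frac{|\nabla c|^2\psi^2}{(s_1+c)^{\beta+1}}+\int_\Omega\frac{n\psi^2}{(s_1+c)^\beta}
=\int_\Omega\frac{c\,\psi^2}{(s_1+c)^\beta}+\int_\Omega\frac{\nabla c\cdot\nabla\psi^2}{(s_1+c)^{\beta}}-\int_\Omega\frac{(u\cdot\nabla c)\psi^2}{(s_1+c)^{\beta}} .
\]
The second term on the left is nonnegative and is dropped. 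I then estimate the three terms on the right separately.

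The key pointwise fact is a bound $s_1+c\ge m>0$ with $m$ uniform in time. When $s_1>0$ take $m=s_1$. In the fluid-free case with $s_1=0$, Lemma~\ref{ULE} gives $c\ge C\int_\Omega n_0=:m$. Since $\beta>1$, this yields $c(s_1+c)^{-\beta}\le (s_1+c)^{1-\beta}\le m^{1-\beta}$.

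First term: it is bounded by $m^{1-\beta}\|\psi_\eta\|_{L^2}^2$.

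Second term: I write $2\psi\nabla\psi$ and apply Young's inequality. This bounds it by $\frac{\beta}{2}\int|\nabla c|^2\psi^2(s_1+c)^{-\beta-1}+\frac{2}{\beta}\int|\nabla\psi|^2(s_1+c)^{1-\beta}$. The first part is absorbed on the left, and the second is at most $\frac{2}{\beta}m^{1-\beta}\|\nabla\psi_\eta\|_{L^2}^2$.

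Fluid term (only present when $s_1>0$): I write $(s_1+c)^{-\beta}\nabla c=\nabla G(c)$ with $G(c)=\frac{(s_1+c)^{1-\beta}}{1-\beta}$, so $|G|\le\frac{s_1^{1-\beta}}{\beta-1}$. Integrating by parts using $\nabla\cdot u=0$ and $u|_{\partial\Omega}=0$ turns it into $\int G\,u\cdot\nabla\psi^2$. This is bounded by $C\|u(\cdot,t)\|_{L^2}\|\nabla\psi_\eta^2\|_{L^2}$. By \eqref{UBDD01} this is at most $C\|\nabla\psi_\eta^2\|_{L^2}$ uniformly in $t<T_{\rm max}$. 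This is exactly why Lemma~\ref{PSILEM} also controls $\nabla\psi_\eta^2$. The step uses the energy bound of Lemma~\ref{UBDD}, hence the standing hypothesis $\gamma>\frac12$ of the fluid-coupled case. I expect this fluid term to be the main obstacle, and the point to check is that nothing beyond $L^\infty_tL^2_x$ of $u$ is needed.

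Combining the three bounds gives, for all $t<T_{\rm max}$,
\[
\int_\Omega\frac{|\nabla c|^2\psi_\eta^2(\cdot-q)}{(s_1+c)^{\beta+1}}\le C\Bigl(\|\psi_\eta\|_{H^1}^2+\|\nabla\psi_\eta^2\|_{L^2}\Bigr).
\]
By Lemma~\ref{PSILEM}, I choose $\eta=\eta_\varepsilon$ so small that the right-hand side is at most $\varepsilon$. Since $\psi_\eta(x)=\ln(-\ln|x|)-\ln(-\ln\eta)\ge1$ exactly when $|x|\le\delta_\varepsilon:=\exp(-e\,(-\ln\eta_\varepsilon))=\eta_\varepsilon^{\,e}\in(0,e^{-1})$, we have $\psi^2\ge1$ on $\Omega\cap B_\delta(q)$ for every $\delta<\delta_\varepsilon$. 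Hence the integral over $\Omega\cap B_\delta(q)$ is at most $\varepsilon$, uniformly in $t$ and $q$.
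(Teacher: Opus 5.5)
Your proof is correct and follows the paper's argument essentially step for step. You test $\eqref{KS01}_2$ with $(s_1+c)^{-\beta}\psi_\eta^2(\cdot-q)$, use the uniform lower bound on $s_1+c$ (from $s_1>0$ or Lemma~\ref{ULE}), absorb the cross term by Young's inequality, and handle the fluid term by writing $(s_1+c)^{-\beta}\nabla c$ as the gradient of a bounded function, integrating by parts, and invoking Lemma~\ref{UBDD} together with $\|\nabla\psi_\eta^2\|_{L^2}\to0$ from Lemma~\ref{PSILEM}.

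You also make explicit two points the paper leaves tacit. First, $\psi_\eta\ge1$ exactly on $B_{\eta^{e}}(0)$, so one may take $\delta_\varepsilon=\eta_\varepsilon^{e}$. Second, the fluid-coupled case needs $\gamma>\frac12$ through Lemma~\ref{UBDD}. The sign of the $u$-term in your identity should be $+$ rather than $-$, but this is harmless since that term is only estimated in absolute value.
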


\begin{proof}
Fix $q\in\overline{\Omega}$ and let $\psi(x):=\psi_\eta(x-q)$, where $\psi_\eta$ is given by Lemma~\ref{PSILEM} (with $d=2$) for some $\eta\in(0,e^{-1})$.
Note that $\supp\psi\subset B_\eta(q)$ and let $B_{\eta}:= B_{\eta}(q)$. Hence all integrals below are taken over $\Omega\cap B_\eta$.
Multiply \eqref{KS01}$_2$ by $\frac{-1}{\beta(s_{1}+c)^{\beta}} \psi^2$ and integrating over $\Omega$, we obtain 
	\begin{equation}\label{KEYPRO_E}
	\begin{aligned}
	   & \int_{\Omega \cap B_{\eta}} \frac{|\nabla c|^2}{(s_{1}+c)^{\beta+1}} \psi^2 
		       + \frac{1}{\beta}\int_{\Omega \cap B_{\eta}}\frac{n}{(s_{1}+c)^{\beta}} \psi^2\\
		       &\quad = \frac{1}{\beta}\int_{\Omega \cap B_{\eta}}\frac{c}{(s_{1}+c)^{\beta}} \psi^2 
		       + \frac{1}{\beta}\int_{\Omega \cap B_{\eta}}\frac{u \cdot \nabla c}{(s_{1}+c)^{\beta  }} \psi^2
		       - \frac{2}{\beta}\int_{\Omega \cap B_{\eta}} \frac{\psi \nabla c \cdot \nabla \psi}{(s_{1}+c)^{\beta}}. 
	\end{aligned}
	\end{equation}
Then we observe that 
	\[
	\frac{1}{\beta}\int_{\Omega \cap B_{\eta}}\frac{c}{(s_{1}+c)^{\beta}} \psi^2 \leq \frac{1}{\beta}\int_{\Omega \cap B_{\eta}}  \frac{1}{(s_{1}+c)^{\beta-1}}\psi^2.
	\]
Moreover, by the Young's inequality,
	\[
	- \frac{2}{\beta}\int_{\Omega \cap B_{\eta}} \frac{\psi \nabla c \cdot \nabla \psi}{(s_{1}+c)^{\beta}} \leq \frac{1}{2}\int_{\Omega \cap B_{\eta}}\frac{|\nabla c|^2}{(s_{1}+c)^{\beta+1}} \psi^2 + \frac{2}{\beta^2}\int_{\Omega \cap B_{\eta}}\frac{1}{(s_{1}+c)^{\beta-1}} |\nabla \psi|^2.
	\]
Inserting these bounds into \eqref{KEYPRO_E} and discarding the nonnegative $n$-term yields
\begin{equation}\label{KEYPRO_E1}
\frac12\int_{\Omega}\frac{|\nabla c|^2}{(s_1+c)^{\beta+1}}\psi^2
\le \frac{1}{\beta}\int_{\Omega}\frac{1}{(s_1+c)^{\beta-1}}\psi^2
+\frac{1}{\beta}\int_{\Omega}\frac{u\cdot\nabla c}{(s_1+c)^{\beta}}\psi^2
+\frac{2}{\beta^2}\int_{\Omega}\frac{|\nabla\psi|^2}{(s_1+c)^{\beta-1}}.
\end{equation}

$\bullet$ \emph{Case 1: $s_1 >0$.}  Note that 
	\[
	\frac{1}{\beta}\int_{\Omega \cap B_{\eta}}\frac{u \cdot \nabla c}{(s_{1}+c)^{\beta  }} \psi^2 = \frac{1}{\beta}\int_{\Omega \cap B_{\eta}} u\cdot \nabla h(c) \psi^2, 
	\ \text{ where } \ h(c)=\int_{0}^{c} (s_{1}+\sigma)^{-\beta} \,d\sigma.
	\]
Because 
 	\[
	0 \le h(c) = \frac{1}{\beta -1} \left[s_{1}^{1-\beta}-(s_{1}+c)^{1-\beta}\right] \leq \frac{s_{1}^{1-\beta}}{\beta - 1},
	\]
it follows that, by the integration by parts using $\nabla\cdot u=0$ in $\Omega$ and $u=0$ on $\partial\Omega$ and H\"{o}lder inequality,
	\[\begin{aligned}
	\frac{1}{\beta}\int_{\Omega \cap B_{\eta}}\frac{u \cdot \nabla c}{(s_{1}+c)^{\beta  }} \psi^2 
	& = -\frac{1}{\beta}\int_{\Omega \cap B_{\eta}} h(c)   u \cdot \nabla \psi^{2} \\
	&\leq \frac{s_{1}^{1-\beta}}{\beta (\beta - 1)} \|u\|_{L^{2}(\Omega)}\bke{\int_{\Omega \cap B_{\eta}}    |\nabla \psi^{2}|^{2}}^{\frac{1}{2}}.
	\end{aligned}
	\]
Since $\beta >1$, combination of all estimates above yields that 
\[\begin{aligned}
	\int_{\Omega \cap B_{\eta}} \frac{|\nabla c|^2}{(s_{1}+c)^{\beta+1}} \psi^2 
		       %+ \frac{2}{\beta}\int_{\Omega \cap B_{\eta}}\frac{n}{\beta(s_{1}+c)^{\beta}} \psi^2 \\
     \leq \frac{2}{\beta s_{1}^{\beta-1}}\int_{\Omega \cap B_{\eta}}   \psi^2 + \frac{4}{\beta^2s_{1}^{\beta-1}}\int_{\Omega \cap B_{\eta}}  |\nabla \psi|^2 + \frac{2 s_{1}^{1-\beta}}{\beta (\beta - 1)} \|u\|_{L^{2}(\Omega)}\left(\int_{\Omega \cap B_{\eta}}|\nabla \psi^2|^2\right)^{\frac{1}{2}},
\end{aligned}
\]
and thus, we can conclude the desired result from Lemma~\ref{PSILEM} and Lemma~\ref{UBDD}.
\smallskip 

$\bullet$ \emph{Case 2: $s_{1}\ge0$ and the system is fluid-free.}  In this case, the second term on RHS of \eqref{KEYPRO_E} is absent. Moreover, by Lemma~\ref{ULE} and $\int_{\Omega}n=\int_{\Omega}n_{0}$, there exists $C>0$ satisfying  $c(x) \ge C\int_{\Omega}n_{0}$ for $x\in \Omega$. Thus, we have
	\[
	\frac{1}{\beta}\int_{\Omega \cap B_{\eta}}\frac{c}{(s_{1}+c)^{\beta}} \psi^2 \le \frac{1}{\beta (C\int_{\Omega}n_{0})^{\beta-1}}\int_{\Omega \cap B_{\eta}}  \psi^2,
	\]
	and 
	\[
	 - \frac{2}{\beta}\int_{\Omega \cap B_{\eta}} \frac{\psi \nabla c \cdot \nabla \psi}{(s_{1}+c)^{\beta}} \leq \frac{1}{2}\int_{\Omega \cap B_{\eta}}\frac{|\nabla c|^2}{(s_{1}+c)^{\beta+1}} \psi^2 + \frac{2}{\beta^2(C\int_{\Omega}n_{0})^{\beta-1}}\int_{\Omega \cap B_{\eta}}|\nabla \psi|^2.
	\]
This yields 
\[\begin{aligned}
	\int_{\Omega \cap B_{\eta}} \frac{|\nabla c|^2}{(s_{1}+c)^{\beta+1}} \psi^2 
    \leq \frac{2}{\beta (C\int_{\Omega}n_{0})^{\beta-1}}\int_{\Omega \cap B_{\eta}}   \psi^2 + \frac{4}{\beta^2(C\int_{\Omega}n_{0})^{\beta-1}}\int_{\Omega \cap B_{\eta}}  |\nabla \psi|^2.
     \end{aligned}
\]
Therefore, as in the previous case, we have the desired result using Lemma~\ref{PSILEM}. 
\end{proof}

We now turn to localized energy estimates by introducing a standard smooth cut-off function.

\begin{lemma}\label{LEMSTEST}
Let $\delta >0$. There exists a radially decreasing function 
$\varphi_{\delta} \in C_{0}^{\infty}(\mathbb{R}^d)$, $d\ge1$,  such that 
	\[
	\varphi_{\delta}(x) = 
	\begin{cases}
		1, & \text{if } x\in B_{\frac{\delta}{2}}(0), \\
		0, & \text{if } x\in \mathbb{R}^d \setminus B_{\delta}(0),
	\end{cases}
	\]
	$0\leq \varphi_{\delta} \leq 1 $    in $\mathbb{R}^d$,
	and 
	\[
 |\nabla \varphi_{\delta}| \leq K \varphi_{\delta}^{\frac{1}{2}} \ \text{ in } \ \mathbb{R}^d,
	\]
	where $K$ is a positive constant of order $\mathcal{O}(\delta^{-1})$.
\end{lemma}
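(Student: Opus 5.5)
The plan is to build $\varphi_\delta$ by scaling a single fixed profile, and to get the gradient bound by making the profile the square of a smooth function. First I will construct a one-dimensional profile. Let $\zeta\in C^\infty(\mathbb{R})$ be nonincreasing with $\zeta(s)=1$ for $s\le \frac12$, $\zeta(s)=0$ for $s\ge1$, and $0\le\zeta\le1$. A standard choice is
\[
\zeta(s)=\frac{g(1-s)}{g(1-s)+g(s-\tfrac12)},\qquad g(t)=e^{-1/t}\ \text{for }t>0,\quad g(t)=0\ \text{for }t\le0 .
\]
Then I set $\theta:=\zeta^{2}$, which is again smooth, nonincreasing, equal to $1$ on $(-\infty,\frac12]$, equal to $0$ on $[1,\infty)$, and valued in $[0,1]$.

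Next, I will define
\[
\varphi_\delta(x):=\theta\bigl(|x|/\delta\bigr),\qquad x\in\mathbb{R}^d .
\]
Smoothness near the origin is automatic, because $\theta$ is constant on $[0,\frac12]$; so $|x|$ enters only where $|x|\ge \delta/2>0$, and there $x\mapsto|x|$ is smooth. The function is radial and radially nonincreasing, has compact support in $\overline{B_\delta(0)}$, equals $1$ on $B_{\delta/2}(0)$, vanishes outside $B_\delta(0)$, and satisfies $0\le\varphi_\delta\le1$.

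For the gradient estimate, the chain rule gives
\[
\nabla\varphi_\delta(x)=\frac{1}{\delta}\,2\zeta\bigl(|x|/\delta\bigr)\,\zeta'\bigl(|x|/\delta\bigr)\,\frac{x}{|x|}.
\]
From this,
\[
|\nabla\varphi_\delta|\le \frac{2\|\zeta'\|_{L^\infty}}{\delta}\,\zeta\bigl(|x|/\delta\bigr)=\frac{2\|\zeta'\|_{L^\infty}}{\delta}\,\varphi_\delta^{1/2}.
\]
Hence the claim holds with $K:=2\|\zeta'\|_{L^\infty}\delta^{-1}=\mathcal{O}(\delta^{-1})$, since $\zeta$ is fixed and independent of $\delta$.

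The only delicate point is the inequality $|\nabla\varphi|\le K\varphi^{1/2}$. A generic smooth cutoff need not satisfy it near the edge of its support. Writing the cutoff as a square sidesteps that issue, because the square-root of $\varphi_\delta$ is then exactly $\zeta(|x|/\delta)$, so no further analysis is needed. Everything else is routine.
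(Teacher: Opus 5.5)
Your argument is correct and complete. $\zeta$ is a smooth nonincreasing profile with the right plateau and support. $\varphi_\delta=\zeta^{2}(|x|/\delta)$ is smooth because it is identically $1$ on $\overline{B_{\delta/2}(0)}$, so the non-smoothness of $|x|$ at the origin never enters and $\nabla\varphi_\delta=0$ there. Finally, $|\nabla\varphi_\delta|\le 2\|\zeta'\|_{L^\infty}\delta^{-1}\zeta(|x|/\delta)=K\varphi_\delta^{1/2}$. The paper gives no proof of this lemma; it is introduced only as a ``standard smooth cut-off function.'' Your construction therefore supplies the omitted justification. Your reading of ``radially decreasing'' as nonincreasing is the only possible one.

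One correction to your closing remark: squaring is convenient but not needed, and it is not true that a generic smooth cutoff may fail the bound. Suppose $f\ge0$ on $\mathbb{R}^d$ and $M:=\|D^2f\|_{L^\infty(\mathbb{R}^d)}\in(0,\infty)$. At any point with $\nabla f(x)\neq0$, Taylor's formula along $e=-\nabla f(x)/|\nabla f(x)|$ gives
\[
0\le f(x+te)\le f(x)-t|\nabla f(x)|+\frac{M}{2}t^{2}\qquad (t>0).
\]
Choosing $t=|\nabla f(x)|/M$ yields $|\nabla f|^{2}\le 2Mf$. For $\varphi_\delta=\theta(|x|/\delta)$ with any fixed smooth profile $\theta$, one has $M=\delta^{-2}\|D^{2}\varphi_{1}\|_{L^\infty}$. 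Hence $|\nabla\varphi_\delta|\le K\varphi_\delta^{1/2}$ with $K=\sqrt{2\|D^2\varphi_1\|_{L^\infty}}\,\delta^{-1}=\mathcal{O}(\delta^{-1})$. The exponent $\frac12$ is exactly what nonnegativity plus bounded second derivatives gives for free. The example $f(x)=|x|^{2}$ shows that $|\nabla f|\le Kf^{a}$ fails near the origin for $a>\frac12$, and power constructions $\varphi=\zeta^{m}$ are what one uses when a larger exponent is needed. Your squaring trick has the mild advantage of producing an explicit constant without this auxiliary inequality.
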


In particular, for any $q \in \overline{\Omega}$, the translated cut-off function $\varphi_{\delta}(x-q)$ satisfies the same properties. 

Next, we prepare a uniform-in-time lower bound for $s_{1}+c$.

\begin{lemma}
Assume that either $s_{1}>0$, or $s_{1}\ge0$ and the system is fluid-free. Then, there exists $C=C(\Omega, s_1, \int_{\Omega}n_0)>0$ such that 
 \begin{equation}\label{CLOWER}
\inf_{\overline{\Omega}}\{s_{1}+c(\cdot,t)\}\ge C\quad\mbox{for all }t<T_{\rm max}.
 \end{equation}
\end{lemma}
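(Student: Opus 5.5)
The statement splits naturally along its two hypotheses, and in each case the lower bound will follow from a different, already available ingredient.

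\emph{Case $s_1>0$.} Here I use only positivity of $c$. By Lemma~\ref{LWLEM}, $c>0$ in $\overline{\Omega}\times(0,T_{\rm max})$. Hence $s_1+c\ge s_1$ pointwise for every $t<T_{\rm max}$. This gives \eqref{CLOWER} with $C=s_1$, which depends only on $s_1$. No information on the fluid is needed, which is important because Lemma~\ref{ULE} is not available when $u\not\equiv0$.

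\emph{Case $s_1\ge 0$, fluid-free.} Now $s_1$ may vanish, so I need a genuine positive lower bound for $c$ itself. For each fixed $t<T_{\rm max}$, $c(\cdot,t)\in C^2(\overline{\Omega})$ solves $-\Delta c+c=n(\cdot,t)$ with $\partial_\nu c=0$ on $\partial\Omega$. The right-hand side $n(\cdot,t)\in C(\overline{\Omega})$ is nonnegative, and $\int_\Omega n(\cdot,t)=\int_\Omega n_0>0$ by mass conservation in Lemma~\ref{LWLEM} together with $n_0\not\equiv0$. Lemma~\ref{ULE} with $w=n(\cdot,t)$ then yields
\[
c(\cdot,t)\ge C(\Omega)\int_\Omega n_0 \quad\text{in }\Omega,
\]
and therefore $s_1+c(\cdot,t)\ge C(\Omega)\int_\Omega n_0$. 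The constant is independent of $t$, which is exactly what uniformity in time requires.

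Taking the minimum of the constants from the two cases (or just the relevant one in each setting) gives $C=C(\Omega,s_1,\int_\Omega n_0)$. The argument has no real obstacle. The only points to check are the hypotheses of Lemma~\ref{ULE} at each time: continuity and nonnegativity of $n(\cdot,t)$, and $C^2(\overline{\Omega})$-regularity of $c(\cdot,t)$. These come from the regularity class in Lemma~\ref{LWLEM} for $t>0$; at $t=0$, $c(\cdot,0)$ is simply the elliptic solution with datum $n_0$, so the same argument applies there too.
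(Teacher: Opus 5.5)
Your proposal is correct and follows the paper's proof essentially verbatim. For $s_1>0$ you take $C=s_1$ from nonnegativity of $c$, and in the fluid-free case you apply Lemma~\ref{ULE} to $-\Delta c+c=n(\cdot,t)$ with mass conservation $\int_\Omega n(\cdot,t)=\int_\Omega n_0>0$; your extra check of the hypotheses of Lemma~\ref{ULE}, including at $t=0$, is a harmless refinement that the paper leaves implicit.
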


%\begin{proof}
%In the fluid-free case, we note from Lemma~\ref{ULE} and $\int_{\Omega}n=\int_{\Omega}n_{0}$ that there exists  $C>0$ satisfying $c(x) \ge C\int_{\Omega}n_{0}$ for $x\in \Omega$. Thus, in any case whether $s_{1}>0$, or $s_{1}\ge0$ and the system is fluid-free, we have \eqref{CLOWER}.
%\end{proof}

\begin{proof}
If $s_1>0$, then $s_1+c(\cdot,t)\ge s_1$ in $\Omega$ for all $t<T_{\max}$, and hence \eqref{CLOWER} holds with $C=s_1$.

In the fluid-free case, $\eqref{KS01}_2$ reduces to $-\Delta c + c = n$ in $\Omega$ with $\partial_\nu c=0$ on $\partial\Omega$.
By Lemma~\ref{ULE} and $\int_\Omega n(\cdot,t)=\int_\Omega n_0>0$, we obtain
$c(\cdot,t)\ge C\int_\Omega n_0$ in $\Omega$ for all $t<T_{\max}$.
This yields \eqref{CLOWER} when $s_1\ge0$.
\end{proof}

Next, we prepare some uniform-in-time estimates for $1/(s_{1}+c)$ and
\begin{equation}\label{FunctionF}
 F(c):=  \int_{1}^{c}\frac{1}{(s_{1}+s)^{k}}ds \quad  \mbox{for } k>\frac{1}{2}.
\end{equation}

\begin{lemma}
Assume that either $s_{1}>0$, or $s_{1}\ge0$ and the system is fluid-free.
For $k>\frac{1}{2}$ and $p\in[1,\infty)$, there exists $C=C(\Omega, k, p, s_1, \int_{\Omega} n_0)>0$ such that 
\begin{equation}\label{FLEM}
 \|F(c)(\cdot,t)\|_{L^{p}(\Omega)}+\|\nabla F(c)(\cdot,t)\|_{L^{2}(\Omega)} \le C\quad\mbox{for all }t<T_{\rm max},
\end{equation}
where $F$ is defined by \eqref{FunctionF}.
\end{lemma}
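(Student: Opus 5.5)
The gradient bound is the core of the argument; the $L^p$ bound then follows from it by a Poincaré--Sobolev argument.

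\emph{Gradient bound.} Since $\nabla F(c)=\frac{\nabla c}{(s_1+c)^{k}}$, we have
\[
\int_\Omega|\nabla F(c)|^2=\int_\Omega\frac{|\nabla c|^2}{(s_1+c)^{2k}}.
\]
Apply identity \eqref{NABLACL2} with $\sigma_1=s_1$ and $r=2k-1>0$. After discarding the nonnegative $n$-term this gives
\[
(2k-1)\int_\Omega\frac{|\nabla c|^2}{(s_1+c)^{2k}}\le\int_\Omega\frac{c}{(s_1+c)^{2k-1}}.
\]
To bound the right-hand side, write $\frac{c}{(s_1+c)^{2k-1}}\le (s_1+c)^{2-2k}$.
\begin{itemize}
\item If $2-2k\le 0$, use the lower bound \eqref{CLOWER}: it gives $(s_1+c)^{2-2k}\le C$ pointwise, hence an integral bound $C|\Omega|$.
\item If $2-2k\in(0,1)$, use Hölder's inequality together with $\int_\Omega c=\int_\Omega n_0$ to obtain $\int_\Omega (s_1+c)^{2-2k}\le C$.
\end{itemize}
In both cases $\|\nabla F(c)\|_{L^2}$ is bounded uniformly in $t$. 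Note that the range $k>\frac12$ is exactly what makes $r=2k-1>0$.

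\emph{$L^p$ bound.} The main obstacle is controlling the mean of $F(c)$, since $F$ can be unbounded in both directions: as $c\to\infty$ when $k\le 1$, and towards $0$ when $s_1=0$. Below, the lower bound \eqref{CLOWER} gives $s_1+c\ge C_0>0$, so for $c\le 1$
\[
|F(c)|\le \int_c^1 (s_1+s)^{-k}\,ds\le C_0^{-k}.
\]
(Here $c\ge0$, so the integration range is harmless.) Above, when $c\ge1$,
\[
0\le F(c)\le \int_1^c (s_1+s)^{-k}\,ds\le C(1+c).
\]
Hence $|F(c)|\le C(1+c)$ pointwise, and therefore $\bigl|\int_\Omega F(c)\bigr|\le C(|\Omega|+\int_\Omega n_0)$.

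We then combine the two pieces with the Poincaré--Sobolev inequality in two dimensions, $W^{1,2}(\Omega)\hookrightarrow L^p(\Omega)$ for every $p<\infty$:
\[
\|F(c)\|_{L^p}\le C\|\nabla F(c)\|_{L^2}+C\Bigl|\int_\Omega F(c)\Bigr|.
\]
This yields \eqref{FLEM}. The constant depends only on $\Omega,k,p,s_1,\int_\Omega n_0$, through \eqref{CLOWER} and mass conservation.
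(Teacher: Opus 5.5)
Your proof is correct and follows the same route as the paper. The gradient bound comes from \eqref{NABLACL2} with $(\sigma_1,r)=(s_1,2k-1)$, and the $L^p$ bound comes from a Sobolev-type inequality that reduces everything to a lower-order quantity controlled by \eqref{CLOWER} and mass conservation. The differences are cosmetic. Your pointwise bound $|F(c)|\le C(1+c)$, read off from the integral definition, replaces the paper's three-case analysis ($k<1$, $k=1$, $k>1$) of the explicit formula. In the other direction, the paper bounds $\int_\Omega c\,(s_1+c)^{1-2k}$ in one step via $(s_1+c)^{1-2k}\le C_0^{1-2k}$, so your case split in the gradient step is unnecessary.
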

\begin{proof}
A direct computation shows that
\[
F(c)=
\begin{cases}
\frac{1}{1-k} \left[(s_1+c)^{1-k}-(s_1+1)^{1-k} \right], & \text{if } k\neq 1,\\[1mm]
\ln(s_1+c)-\ln(s_1+1), & \text{if } k=1,
\end{cases}
\qquad\text{and}\qquad
\nabla F(c)=\frac{\nabla c}{(s_1+c)^k}.
\]

Choose $(\sigma_1,r)=(s_1,2k-1)$ in \eqref{NABLACL2}. Since $k>\frac{1}{2}$, by applying \eqref{CLOWER}, it follows that  
\[
(2k-1)\int_\Omega \frac{|\nabla c|^2}{(s_1+c)^{2k}}
\le \int_\Omega \frac{c}{(s_1+c)^{2k-1}} \le C^{1-2k} \int_{\Omega} c = C^{1-2k} \int_{\Omega} n_0.
\]
This yields the desired uniform bound for $\|\nabla F(c)\|_{L^2(\Omega)}$.

Next, in the view of the Gagliardo-Nirenberg inequality, it is enough to show the uniform-in-time bound of $\| F(c)(\cdot,t)\|_{L^{1}(\Omega)}$.
For $\|F(c)\|_{L^1(\Omega)}$, we consider three cases. \emph{(i)} If $\frac{1}{2}<k<1$, then $1-k \in (0,1)$ and $(s_1 + c)^{1-k} \leq C(1+c)$ for all $c \geq 0$ using $a^{1-k} \le 1+a$ for all $a\ge 0$. Because $\int_{\Omega} c = \int_{\Omega} n_0$, it follows that
\[
\|F(c)\|_{L^1(\Omega)}\le C\left(1+\|c\|_{L^1(\Omega)}\right)
= C\left(1+\|n_0\|_{L^1(\Omega)}\right).
\]
\emph{(ii)} If $k>1$, then $1-k<0$ and $(s_1+c)^{1-k}\le C^{1-k}$ by \eqref{CLOWER}. Hence $\|F(c)\|_{L^1(\Omega)}\le C$.  
\emph{(iii)} If $k=1$, then $\ln (s_1 +c)$ is bounded from below using \eqref{CLOWER}, while from above using $\ln a \le a$ for $a>0$. Therefore, $\|F(c)\|_{L^1(\Omega)}\le C\left(1+\|c\|_{L^1(\Omega)}\right)\le C$.
\end{proof}

%Next, we derive some local estimates using the cut-off function $\varphi_{\delta}$. 

Next, we derive localized estimates by means of the cut-off function $\varphi_\delta$ from Lemma~\ref{LEMSTEST}.
In particular, we obtain a local $L^2$-bound for $n$ and a local $L^4$-bound for $\nabla c/(s_1+c)^k$, which will be used in the subsequent energy arguments.

\begin{lemma}\label{LEMLOCINT}
Let $k>\frac{1}{2}$.
Assume that either $s_{1}>0$, or $s_{1}\ge0$ and the system is fluid-free.
Denote $\varphi(x)=\varphi_{\delta}(x-q)$ and $B_{\delta}=B_{\delta}(q)$  for $q\in\overline{\Omega}$, where $\varphi_{\delta}$, $\delta>0$,  is the function given in   Lemma~\ref{LEMSTEST}. 
Then, there exist two positive constants $C_{1}=C_1(\Omega, \int_{\Omega} n_0) >0$ and  $C_2=C_2(\Omega, k, s_1, \int_{\Omega} n_0)>0$ independent of $\delta$ and $q$  such that
\begin{equation}\label{LEMLOCINT1}
\int_{\Omega}n^{2}\varphi^{4}
 \le C_{1} \bke{  \int_{\Omega} \frac{|\nabla n|^{2}}{n}\varphi^{4}  +\| \varphi^{2}  \|_{W^{1,\infty}(\R^{2})}^{2}},
\end{equation}
\begin{align}\label{LEMLOCINT2}
\begin{aligned} 
 \int_{\Omega} \frac{|\nabla c|^{4}}{(s_{1}+c)^{4k}}\varphi^{4}
 &\le  C_{2}\norm{\frac{\nabla c}{(s_{1}+c)^{k}}}_{L^{2}(\Omega\cap B_{\delta})} \norm{\frac{\nabla c}{(s_{1}+c)^{\frac{3-k}{2}}}}_{L^{2}(\Omega\cap B_{\delta})}^{2}  \int_{\Omega} \frac{|\nabla c|^{4}}{(s_{1}+c)^{4k}}\varphi^{4}
   \\
 &\quad +  C_{2}\norm{\frac{\nabla c}{(s_{1}+c)^{k}}}_{L^{2}(\Omega\cap B_{\delta})}
\bke{   \int_{\Omega} \frac{|\nabla n|^{2}}{n}\varphi^{4}  +\| \varphi^{2}  \|_{W^{1,\infty}(\R^{2})}^{2}  +\|\varphi^{\frac{4}{3}}\|_{W^{2,2}(\R^{2})}^{3}} \\
&\quad +C_{2} \norm{\frac{\nabla c}{(s_{1}+c)^{k}}}^{4}_{L^{2}(\Omega\cap B_{\delta})} \bke{\int_{\Omega}|u|^{\frac{12}{5}} }^{5}
\end{aligned}
\end{align}
\end{lemma}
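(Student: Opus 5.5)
For \eqref{LEMLOCINT1}, I will apply the two-dimensional Gagliardo--Nirenberg inequality on $\Omega$ to $w:=\sqrt{n}\,\varphi^{2}$:
\[
\int_\Omega n^2\varphi^4=\|w\|_{L^4}^4\le C\|\nabla w\|_{L^2}^2\|w\|_{L^2}^2+C\|w\|_{L^2}^4 .
\]
Here $\|w\|_{L^2}^2\le\int_\Omega n_0$. For the gradient I compute $\nabla w=\tfrac12 n^{-1/2}\nabla n\,\varphi^2+\sqrt n\,\nabla\varphi^2$, so that
\[
\|\nabla w\|_{L^2}^2\le \tfrac12\int_\Omega\frac{|\nabla n|^2}{n}\varphi^4+2\|\varphi^2\|_{W^{1,\infty}}^2\int_\Omega n_0 .
\]
This gives \eqref{LEMLOCINT1} with $C_1$ depending only on $\Omega$ and the mass. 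The constant is independent of $\delta$ and $q$ because Gagliardo--Nirenberg is applied on the fixed domain $\Omega$, not on $B_\delta$.

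For \eqref{LEMLOCINT2}, I set $v:=\nabla F(c)=\nabla c/(s_1+c)^k$ and $I:=\int_\Omega|v|^4\varphi^4$. The plan is to integrate by parts once, using the $c$-equation, and then use a localized Ladyzhenskaya argument. First write
\[
I=\int_\Omega |v|^2\varphi^4\, v\cdot\nabla F(c),
\]
and integrate by parts. No boundary term appears since $\partial_\nu c=0$. This produces three kinds of terms.
\begin{enumerate}[(a)]
\item $\int F\,\Delta c\,(s_1+c)^{-3k}|\nabla c|^2\varphi^4$-type terms. By $\eqref{KS01}_2$, $\Delta c=u\cdot\nabla c+c-n$.
\item $\int F(c)\,\varphi^4\,\nabla|v|^2\cdot v$.
\item $\int F(c)|v|^2 v\cdot\nabla\varphi^4$.
\end{enumerate}
The term involving $\partial_c\big((s_1+c)^{-3k}\big)|\nabla c|^4$ will not be integrated by parts in this way. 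Instead I will keep $|v|^2$ paired with a factor $\nabla c/(s_1+c)^{(3-k)/2}$: the identity $|v|^2|\nabla c|^2/(s_1+c)^{2k+1}$ split by H\"older yields exactly the cubic structure $\|v\|_{L^2(B_\delta)}\,\|\nabla c/(s_1+c)^{(3-k)/2}\|_{L^2(B_\delta)}^2\, I$. More concretely, I expect to bound things through the local Gagliardo--Nirenberg inequality $\|g\varphi\|_{L^4}^2\le C\|g\|_{L^2(B_\delta)}\|\nabla(g\varphi)\|_{L^2}$ plus lower order terms, applied to $g=|v|$. This reduces matters to controlling $\int|D^2c|^2(s_1+c)^{-2k}\varphi^4$. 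That quantity I will estimate by Bochner's identity $\Delta\tfrac12|\nabla c|^2=|D^2c|^2+\nabla c\cdot\nabla\Delta c$ tested against $(s_1+c)^{-2k}\varphi^4$, together with the $c$-equation. The boundary curvature term is handled by the standard trace estimate $\partial_\nu|\nabla c|^2\le C|\nabla c|^2$.

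The terms then arising are treated as follows.
\begin{itemize}
\item The $n$-contribution $\int n\,|\Delta c|\cdots$ is bounded by $\int n^2\varphi^4$, which is controlled via \eqref{LEMLOCINT1}. This accounts for the second line of \eqref{LEMLOCINT2}, each term carrying one factor $\|v\|_{L^2(B_\delta)}$ from the Gagliardo--Nirenberg step.
\item The derivatives falling on the cut-off produce $\|\varphi^2\|_{W^{1,\infty}}^2$. They also produce the term $\|\varphi^{4/3}\|_{W^{2,2}}^3$, from $\Delta\varphi^4$ after writing $\varphi^4=(\varphi^{4/3})^3$ and using Sobolev embedding.
\item The transport term $\int (u\cdot\nabla c)\cdots$ is estimated by H\"older with exponents $12/5$ and $12/7$, using $|\nabla c|^{3}(s_1+c)^{-3k}\varphi^{3}\lesssim I^{3/4}$. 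Young's inequality then absorbs a fraction of $I$ and leaves $\|v\|^4_{L^2(B_\delta)}(\int|u|^{12/5})^5$ after rebalancing the powers.
\item Throughout, the lower bound \eqref{CLOWER} converts powers of $(s_1+c)^{-1}$ into constants. The bound \eqref{FLEM} controls $F(c)$ where needed.
\end{itemize}

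The main obstacle is the Bochner/Hessian step. It requires exact bookkeeping of the powers of $(s_1+c)$ so that the cubic term comes out with precisely the weight $(3-k)/2$ and without unabsorbable $|D^2c|^2$ terms. If the exponents fail to match, I would first try choosing the weight $(s_1+c)^{-2k}$ versus $(s_1+c)^{-(k+1)}$ in the Bochner test, whichever makes the $|\nabla c|^4$ correction term reducible to the stated product.
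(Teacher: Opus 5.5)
Your argument for \eqref{LEMLOCINT1} is fine. Ladyzhenskaya for $\sqrt{n}\,\varphi^{2}$ is an equivalent variant of the paper's $W^{1,1}(\Omega)\hookrightarrow L^{2}(\Omega)$ step for $n\varphi^{2}$, and the leftover $C(\int_\Omega n_0)^2$ is absorbed because $\|\varphi^2\|_{W^{1,\infty}(\R^2)}\ge 1$.

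The gap is in \eqref{LEMLOCINT2}, and it is structural, not a matter of bookkeeping. The route you commit to is Ladyzhenskaya for $|v|\varphi$ plus a Bochner bound on the weighted Hessian. It gives $I\lesssim \|v\|^2_{L^2(\Omega\cap B_\delta)}\|\nabla(|v|\varphi)\|^2_{L^2(\Omega)}+\dots$. Since $\nabla v=\frac{D^2c}{(s_1+c)^k}-k\frac{\nabla c\otimes\nabla c}{(s_1+c)^{k+1}}$, and the Bochner test produces the same term from $\nabla(s_1+c)^{-2k}$, the quartic contribution is $\|v\|^2_{L^2(\Omega\cap B_\delta)}\int_\Omega\frac{|\nabla c|^4}{(s_1+c)^{2k+2}}\varphi^{2}$. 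This cannot be turned into $\|v\|_{L^2(\Omega\cap B_\delta)}\|\nabla c/(s_1+c)^{\frac{3-k}{2}}\|^2_{L^2(\Omega\cap B_\delta)}\,I$, for three reasons:
\begin{enumerate}[(i)]
\item the cut-off power is $\varphi^2$, not $\varphi^4$;
\item for $k>1$ the weight $(s_1+c)^{-(2k+2)}$ is not dominated by $(s_1+c)^{-4k}$, since no upper bound on $c$ is available;
\item for $k<1$ the prefactor is $\|v\|^2_{L^2(\Omega\cap B_\delta)}$, which Proposition~\ref{KEYPRO} does not make small (that needs $2k-1>1$). This is exactly why Lemma~\ref{LEMLOGL} needs the smallness to sit on the $\frac{3-k}{2}$-weighted factor.
\end{enumerate}
The transport terms fail similarly. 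Your route produces $\int_\Omega|u|^2|v|^2\varphi^4$ from $(\Delta c)^2$, or $\int_\Omega|F(c)||u||v|^3\varphi^4$ from the integration by parts. Neither is controlled by $\int_\Omega|u|^{12/5}$ and $I$: pairing $u\in L^{12/5}$ with $|v|^3\varphi^3\in L^{12/7}$ would need $v\in L^{36/7}$. Term (b), $\int_\Omega F(c)\varphi^4\nabla|v|^2\cdot v$, also does not close by H\"older, since $F(c)$ is only in each $L^p$, $p<\infty$.

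The missing idea is an $L^2$--$L^6$ split followed by an $L^{3/2}$ elliptic estimate, with no Hessian of $c$ needed:
\[
I\le \|v\|_{L^2(\Omega\cap B_\delta)}\Big(\int_\Omega|v|^6\varphi^8\Big)^{1/2}=\|v\|_{L^2(\Omega\cap B_\delta)}\big\|\nabla(F(c)\varphi^{4/3})-F(c)\nabla\varphi^{4/3}\big\|^3_{L^6(\Omega)}.
\]
Then apply the Neumann estimate $W^{2,3/2}(\Omega)\hookrightarrow W^{1,6}(\Omega)$ to $F(c)\varphi^{4/3}$. Its boundary datum $F(c)\partial_\nu\varphi^{4/3}$ is handled by the trace inequality and \eqref{FLEM}. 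Everything then reduces to cubes of $L^{3/2}$ norms of the pieces of $\Delta F(c)=\frac{u\cdot\nabla c+c-n}{(s_1+c)^k}-k\frac{|\nabla c|^2}{(s_1+c)^{k+1}}$:
\begin{enumerate}[(i)]
\item The gradient piece gives $\big(\int_\Omega\frac{|\nabla c|^3}{(s_1+c)^{3(k+1)/2}}\varphi^2\big)^2\le\|\nabla c/(s_1+c)^{\frac{3-k}{2}}\|^2_{L^2(\Omega\cap B_\delta)}\,I$ by Cauchy--Schwarz. This is where both the weight $\frac{3-k}{2}$ and the single factor $\|v\|_{L^2(\Omega\cap B_\delta)}$ come from.
\item The transport piece gives $\int_\Omega|u|^{3/2}|v|^{3/2}\varphi^2$, hence $(\int_\Omega|u|^{12/5})^{5/4}I^{3/4}$, and after Young $\|v\|^4_{L^2(\Omega\cap B_\delta)}(\int_\Omega|u|^{12/5})^5$.
\item The $n$-piece gives $C\int_\Omega n^2\varphi^4$ via \eqref{NABLACL2} and \eqref{CLOWER}, to which \eqref{LEMLOCINT1} then applies.
\end{enumerate}
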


\begin{proof}
First, to prove \eqref{LEMLOCINT1}, we use the Sobolev embedding $W^{1, 1}(\Omega)\hookrightarrow L^{2}(\Omega)$ valid in two dimensions that there exists $C=C(\Omega)>0$ satisfying
\[
 \int_{\Omega}n^{2}\varphi^{4}   \le C\bke{ \|\nabla(n\varphi^{2})\|_{L^{1}(\Omega)}^{2}+\|n\varphi^{2}\|_{L^{1}(\Omega)}^{2}}.
\]
Using   H\"older's inequality and  $\int_{\Omega}n=\int_{\Omega}n_{0}$, there is $C>0$, independent of $\delta$ and $q$, such that 
\[
\begin{aligned}
 \|\nabla(n\varphi^{2})\|_{L^{1}(\Omega)}^{2} +\|n\varphi^{2}\|_{L^{1}(\Omega)}^{2}
\le C\bke{ \|n_{0}\|_{L^{1}(\Omega)}  \int_{\Omega} \frac{|\nabla n|^{2}}{n}\varphi^{4} +\|n_{0}\|_{L^{1}(\Omega)}^{2}\| \varphi^{2}  \|_{W^{1,\infty}(\R^{2})}^{2} }.
 \end{aligned}
\]
We prove \eqref{LEMLOCINT1} by combining two inequalities above.

Recall $F$ from \eqref{FunctionF}, so that $\nabla F(c)=\frac{\nabla c}{(s_1+c)^k}$.
Now, using  H\"older's  inequality and $(a_{1}+a_{2})^{3}\le 4(a_{1}^{3}+a_{2}^{3})$ for $a_{1},a_{2}\ge0$,  we compute the following 
\begin{equation}\label{LEM5_1}
\begin{aligned}[b]
 \int_{\Omega}\frac{|\nabla c|^{4}}{(s_{1}+c)^{4k}}\varphi^{4}
&\le   \norm{\frac{\nabla c}{(s_{1}+c)^{k}}}_{L^{2}(\Omega\cap B_{\delta})}\bke{\int_{\Omega}\frac{|\nabla c|^{6}}{(s_{1}+c)^{6k}}\varphi^{8}}^{\frac{1}{2}}\\
%&=  \norm{\frac{\nabla c}{(s_{1}+c)^{k}}}_{L^{2}(\Omega\cap B_{\delta})} \norm{ \nabla F(c) \varphi^{\frac{4}{3}} }_{L^{6}(\Omega)}^{3}\\
 &= \norm{\frac{\nabla c}{(s_{1}+c)^{k}}}_{L^{2}(\Omega\cap B_{\delta})}\norm{ \nabla (F(c)\varphi^{\frac{4}{3}})- F(c) \nabla \varphi^{\frac{4}{3}} }_{L^{6}(\Omega)}^{3}
\\
 &\le   \norm{\frac{\nabla c}{(s_{1}+c)^{k}}}_{L^{2}(\Omega\cap B_{\delta})}\bke{\norm{  F(c)  \varphi^{\frac{4}{3}}}_{W^{1,6}(\Omega)} + \norm{F(c)\nabla \varphi^{\frac{4}{3}}}_{L^{6}(\Omega)}}^{3}
\\
& \le  4\norm{\frac{\nabla c}{(s_{1}+c)^{k}}}_{L^{2}(\Omega\cap B_{\delta})} \bke{\norm{  F(c)  \varphi^{\frac{4}{3}}}_{W^{1,6}(\Omega)}^{3}+  \norm{ F(c)}_{L^{12}(\Omega)}^{3}\norm{ \nabla \varphi^{\frac{4}{3}}}_{L^{12}(\Omega)}^{3}}.
\end{aligned}
\end{equation}
Note that $\norm{ F(c)}_{L^{12}(\Omega)}$ is uniformly bounded by Lemma~\ref{FLEM} with $p=12$.

First, we estimate $\norm{   F(c)  \varphi^{\frac{4}{3}}}_{W^{1,6}(\Omega)}$.
The Sobolev embedding $W^{2,\frac{3}{2}}(\Omega)\hookrightarrow W^{1,6}(\Omega)$ along with 
%the standard elliptic regularity theory 
a standard $W^{2,p}$-estimate with $p=\frac{3}{2}$
yields that there exists $C=C(\Omega)>0$ independent of $\delta$ and $q$ satisfying
\begin{equation}\label{LEM5_2}
\begin{aligned}
 \norm{   F(c)  \varphi^{\frac{4}{3}}}_{W^{1,6}(\Omega)}^{3}
 \le C\bke{  \norm{\Delta \bke{  F(c)   \varphi^{\frac{4}{3}} } }_{L^{\frac{3}{2}}(\Omega)}^{3} +\norm{    F(c)   \varphi^{\frac{4}{3}}}_{L^{\frac{3}{2}}(\Omega)}^{3} +\norm{ F(c)\nabla \varphi^{\frac{4}{3}} \cdot \nu }_{W^{\frac{1}{3},\frac{3}{2}}(\partial\Omega)}^{3} },
\end{aligned}
\end{equation}
where we used
$\nabla(F(c)\varphi^{\frac{4}{3}})\cdot\nu=F(c)\nabla \varphi^{\frac{4}{3}} \cdot \nu$ on $\partial\Omega$ since $\partial_{\nu} c = 0$ on $\partial \Omega$. 
By H\"older's inequality and   $W^{1,2}(\Omega)\hookrightarrow L^{\frac{12}{7}}(\Omega)$, we can find $C=C(\Omega)>0$ such that
\[
\| F(c)   \varphi^{\frac{4}{3}}\|_{L^{\frac{3}{2}}(\Omega)}^{3}\le C \norm{ F(c)}_{L^{12}(\Omega)}^{3}\|\varphi^{\frac{4}{3}}\|_{W^{2,2}(\R^{2})}^{3}.
\]
Note   from the trace inequality and a direct computation that
\begin{align*}
\begin{aligned}
\| F(c)\nabla \varphi^{\frac{4}{3}} \cdot \nu \|_{W^{\frac{1}{3},\frac{3}{2}}(\partial\Omega)}^{3}\le C \|F(c)\nabla \varphi^{\frac{4}{3}} \|_{W^{1,\frac{3}{2}}( \Omega)}^{3}.
\end{aligned}
\end{align*}
Note also from H\"older's inequality and $W^{1,2}(\Omega)\hookrightarrow L^{q}(\Omega)$ for all $q\in[1,\infty)$ that with some $C=C(\Omega)>0$
\[
\|F(c)\nabla \varphi^{\frac{4}{3}} \|_{W^{1,\frac{3}{2}}( \Omega)}^{3}\le C(\|F(c)\|_{L^{12}(\Omega)}^{3}+\|\nabla F(c)\|_{L^{2}(\Omega)}^{3}+ \|F(c)\|_{L^{6}(\Omega)}^{3})\|\varphi^{\frac{4}{3}}\|_{W^{2,2}(\R^{2})}^{3}.
\]
By applying Lemma~\ref{FLEM} with $p=12$ and $p=6$, the last two terms of \eqref{LEM5_2} are bounded uniformly. 
Hence it remains to estimate $\|\Delta(F(c)\varphi^{4/3})\|_{L^{3/2}}^{3}$.
Using  H\"older's inequality,  $(a_{1}+a_{2})^{3}\le 4(a_{1}^{3}+a_{2}^{3})$ for $a_{1},a_{2}\ge0$,  $\eqref{KS01}_{2}$, and $W^{2,2}(\Omega)\hookrightarrow W^{1,6}(\Omega)$, we compute 
\begin{align*}
\begin{aligned}
\|  \Delta(F(c)   \varphi^{\frac{4}{3}})  \|_{L^{\frac{3}{2}}(\Omega)}^{3}
&\le 4\| \Delta F(c)   \varphi^{\frac{4}{3}}   \|_{L^{\frac{3}{2}}(\Omega)}^{3}
    +4 \left( 2\| \nabla F(c) \cdot \nabla \varphi^{\frac{4}{3}}\|_{L^{\frac{3}{2}}(\Omega)}+\| F( c)  \Delta \varphi^{\frac{4}{3}}   \|_{L^{\frac{3}{2}}(\Omega)}\right)^{3}
\\
&\le  16\bke{\norm{\frac{ u\cdot \nabla c-n+c  }{(s_{1}+c)^{k}}  \varphi^{\frac{4}{3}}   }_{L^{\frac{3}{2}}(\Omega)}^{3}
+  k^{3}\norm{ \frac{|\nabla c|^{2}}{(s_{1}+c)^{k+1}}      \varphi^{\frac{4}{3}}   }_{L^{\frac{3}{2}}(\Omega)}^{3}} 
\\&\quad+ 8\left( \|\nabla F(c)  \|_{L^{2}(\Omega )} +\|F(c)\|_{L^{6}(\Omega)} \right)^{3} \|\varphi^{\frac{4}{3}}\|^{3}_{W^{2,2}(\R^{2})}  .
\end{aligned}
\end{align*}
The last term is bounded uniformly by Lemma~\ref{FLEM} with $p=6$.
By applying H\"{o}lder inequality, we note that 
\begin{align*}
\begin{aligned}
\norm{ \frac{|\nabla c|^{2}}{(s_{1}+c)^{k+1}}      \varphi^{\frac{4}{3}}   }_{L^{\frac{3}{2}}(\Omega)}^{3}  \le \norm{\frac{\nabla c}{(s_{1}+c)^{\frac{3-k}{2}}}}_{L^{2}(\Omega\cap B_{\delta})}^{2}  \int_{\Omega} \frac{|\nabla c|^{4}}{(s_{1}+c)^{4k}}\varphi^{4}. 
\end{aligned}
\end{align*}
Now, it follows that, by the H\"{o}lder inequality along with $ \varphi \le 1$, 
\begin{align*}
\begin{aligned}
\norm{\frac{ u\cdot \nabla c  }{(s_{1}+c)^{k}}  \varphi^{\frac{4}{3}}   }_{L^{\frac{3}{2}}(\Omega)}^{3}
\le \bke{\int_{\Omega}|u|^{\frac{12}{5}} }^{\frac{5}{4}}\bke{\int_{\Omega} \frac{|\nabla c|^{4}}{(s_{1}+c)^{4k}}\varphi^{4}}^{\frac{3}{4}}.
\end{aligned}
\end{align*}
There is a constant $C=C(\Omega, s_1, k, \int_{\Omega}n_0)>0$, by H\"older's inequality along with \eqref{NABLACL2} and \eqref{CLOWER}, such that 
\begin{align*}
\begin{aligned}
\norm{ \frac{ n }{(s_{1}+c)^{k}}  \varphi^{\frac{4}{3}}   }_{L^{\frac{3}{2}}(\Omega)}^{3} 
&\le  \norm{\frac{n}{(s_{1}+c)^{2k}}}_{L^{1}( \Omega \cap B_{\delta} )}\int_{\Omega}n^{2}\varphi^{4} 
\\&  \le \left(\int_{\Omega} \frac{1}{(s_{1}+c)^{2k-1}} \right) \int_{\Omega}n^{2}\varphi^{4} 
\\&\le  C\int_{\Omega}n^{2}\varphi^{4},
\end{aligned}
\end{align*}
Similarly, by H\"older's inequality, \eqref{CLOWER}, $\int_{\Omega}c=\int_{\Omega}n_{0}$, and $W^{2,2}(\Omega)\hookrightarrow L^{6}(\Omega)$,
there is $C>0$ such that % {\color{dred} $C=C(\Omega, s_1, \int_{\Omega} n_0)>0$}
\begin{align*}
\begin{aligned}
\norm{ \frac{ c }{(s_{1}+c)^{k}}  \varphi^{\frac{4}{3}}   }_{L^{\frac{3}{2}}(\Omega)}^{3} 
&\le \norm{ \frac{ c }{(s_{1}+c)^{k}}}_{L^{2}(\Omega \cap B_{\delta} )}^{3} \norm{  \varphi^{\frac{4}{3}}   }_{L^{6}(\Omega)}^{3} 
\\&  \le \left(\int_{\Omega} \frac{c}{(s_1 + c)^{2k-1}}\right)^{\frac{3}{2}} \norm{  \varphi^{\frac{4}{3}}   }_{L^{6}(\Omega)}^{3} 
\\&\le C \norm{\varphi^{\frac{4}{3}}}_{W^{2,2}(\R^{2})}^{3} .
\end{aligned}
\end{align*}
Therefore, there is a constant $C>0$ such that 
\[\begin{aligned}
\| \Delta(F(c)   \varphi^{\frac{4}{3}})  \|_{L^{\frac{3}{2}}(\Omega)}^{3}
&\leq C\int_{\Omega}n^{2}\varphi^{4} 
+ 16 \bke{\int_{\Omega}|u|^{\frac{12}{5}} }^{\frac{5}{4}}\bke{\int_{\Omega} \frac{|\nabla c|^{4}}{(s_{1}+c)^{4k}}\varphi^{4}}^{\frac{3}{4}} \\
&\quad + 16 k^3 \norm{\frac{\nabla c}{(s_{1}+c)^{\frac{3-k}{2}}}}_{L^{2}(\Omega\cap B_{\delta})}^{2}  \int_{\Omega} \frac{|\nabla c|^{4}}{(s_{1}+c)^{4k}}\varphi^{4}
 + C \norm{\varphi^{\frac{4}{3}}}_{W^{2,2}(\R^{2})}^{3}.
\end{aligned}\]
Moreover, there is a constant $C>0$ such that 
\[\begin{aligned}
\norm{   F(c)  \varphi^{\frac{4}{3}}}_{W^{1,6}(\Omega)}^{3}
&\leq C\int_{\Omega}n^{2}\varphi^{4} 
+ C\bke{\int_{\Omega}|u|^{\frac{12}{5}} }^{\frac{5}{4}}\bke{\int_{\Omega} \frac{|\nabla c|^{4}}{(s_{1}+c)^{4k}}\varphi^{4}}^{\frac{3}{4}} \\
&\quad + C k^3 \norm{\frac{\nabla c}{(s_{1}+c)^{\frac{3-k}{2}}}}_{L^{2}(\Omega\cap B_{\delta})}^{2}  \int_{\Omega} \frac{|\nabla c|^{4}}{(s_{1}+c)^{4k}}\varphi^{4}
 + C \norm{\varphi^{\frac{4}{3}}}_{W^{2,2}(\R^{2})}^{3}.
\end{aligned}\]

Combining the above estimates
shows that there exists a constant $C>0$ independent of $\delta$ and $q$ such that
\begin{align*}
\begin{aligned}
 \int_{\Omega}\frac{|\nabla c|^{4}}{(s_{1}+c)^{4k}}\varphi^{4} 
 & \le  C\norm{\frac{\nabla c}{(s_{1}+c)^{k}}}_{L^{2}(\Omega\cap B_{\delta})} \norm{\frac{\nabla c}{(s_{1}+c)^{\frac{3-k}{2}}}}_{L^{2}(\Omega\cap B_{\delta})}^{2}  \int_{\Omega} \frac{|\nabla c|^{4}}{(s_{1}+c)^{4k}}\varphi^{4}
 \\ & \quad +
  C \norm{\frac{\nabla c}{(s_{1}+c)^{k}}}_{L^{2}(\Omega\cap B_{\delta})}   \bkt{ \bke{\int_{\Omega} \frac{|\nabla c|^{4}}{(s_{1}+c)^{4k}}\varphi^{4}}^{\frac{3}{4}}\bke{\int_{\Omega}|u|^{\frac{12}{5}} }^{\frac{5}{4}} + \int_{\Omega}n^{2}\varphi^{4}  +\|\varphi^{\frac{4}{3}}\|_{W^{2,2}(\R^{2})}^{3}}.
\end{aligned}
\end{align*}
Finally, by Young's inequality, \eqref{NABLACL2} and   \eqref{LEMLOCINT1}, it follows that there exists a constant $C=C(\Omega, s_1, k, \int_{\Omega}n_0)>0$ independent of $\delta$ and $q$ satisfying
\[
\begin{aligned}
  \int_{\Omega}\frac{|\nabla c|^{4}}{(s_{1}+c)^{4k}}\varphi^{4} 
 &  \le C\norm{\frac{\nabla c}{(s_{1}+c)^{k}}}_{L^{2}(\Omega\cap B_{\delta})} \norm{\frac{\nabla c}{(s_{1}+c)^{\frac{3-k}{2}}}}_{L^{2}(\Omega\cap B_{\delta})}^{2}  \int_{\Omega} \frac{|\nabla c|^{4}}{(s_{1}+c)^{4k}}\varphi^{4}
 \\&\quad  +   C \norm{\frac{\nabla c}{(s_{1}+c)^{k}}}_{L^{2}(\Omega\cap B_{\delta})}     \bke{  \int_{\Omega} \frac{|\nabla n|^{2}}{n}\varphi^{4}  +\| \varphi^{2}  \|_{W^{1,\infty}(\R^{2})}^{2}  +\|\varphi^{\frac{4}{3}}\|_{W^{2,2}(\R^{2})}^{3}}
 \\&\quad + C \norm{\frac{\nabla c}{(s_{1}+c)^{k}}}^{4}_{L^{2}(\Omega\cap B_{\delta})} \bke{\int_{\Omega}|u|^{\frac{12}{5}} }^{5},
 \end{aligned}
\]
namely, \eqref{LEMLOCINT2}.
\end{proof}

%The spatially localized $L\ln L$-norm of $n$ is uniformly bounded.

Next, we prove a localized entropy estimate for $n$.
Testing \eqref{KS01}$_1$ by $\ln n\,\varphi^4$ and combining the resulting identity with Lemma~\ref{LEMLOCINT} and Proposition~\ref{KEYPRO}, we obtain uniform bounds for the localized $L\ln L$-norm of $n$.

\begin{lemma}\label{LEMLOGL}
Assume that either $\gamma>\frac{1}{2}$ and $s_{1}>0$, or  $\gamma>0$,  $s_{1}\ge0$ and the system is fluid-free.
Denote $\varphi(x)=\varphi_{\delta}(x-q)$ and $B_{\delta}=B_{\delta}(q)$  for $q\in\overline{\Omega}$, where $\varphi_{\delta}$, $\delta>0$,  is the function given in   Lemma~\ref{LEMSTEST}.  Then, there exist   $\delta_{*}>0$ independent of $q $ such that if $\delta \in (0, \delta_{*})$, then there exists 
   $C=C(\delta, \Omega, s_0, s_1, \gamma, \int_{\Omega}n_0, \|u_0\|_{L^2(\Omega)}, \allowbreak\|\nabla \Phi\|_{L^{\infty}(\Omega)})>0$ independent of $q $  satisfying
\[
\sup_{t<T_{\rm max}} \int_{\Omega}  (n\ln n(\cdot,t) +e^{-1})\varphi^{4}\le C,
\]
and 
\[
\int_{t}^{t+\tau}\int_{\Omega}\frac{|\nabla n|^{2}}{n}\varphi^{4} \le C\quad\mbox{ for all }t\in(0,T_{\rm max}-\tau),
\]
where $\tau:=\min\{1,\frac{1}{2}T_{\rm max}\}$.
\end{lemma}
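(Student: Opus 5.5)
We test \eqref{KS01}$_1$ with $(\ln n+1)\varphi^{4}$. We then use the smallness from Proposition~\ref{KEYPRO} together with Lemma~\ref{LEMLOCINT} to absorb the chemotactic cross term.

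\emph{Step 1: the localized entropy identity.} Using $\nabla\cdot u=0$, $u=0$ on $\partial\Omega$ and the no-flux boundary condition, we compute
\[
\frac{d}{dt}\int_\Omega n\ln n\,\varphi^4+\int_\Omega\frac{|\nabla n|^2}{n}\varphi^4
=\int_\Omega \nabla n\cdot S\nabla c\,\varphi^4
-\int_\Omega (\ln n+1)\nabla n\cdot\nabla\varphi^4
+\int_\Omega n\ln n\, u\cdot\nabla\varphi^4
+\int_\Omega n(\ln n+1)S\nabla c\cdot\nabla\varphi^4 .
\]
The first term on the right is the chemotactic term. By \eqref{KS02} and Young's inequality it is bounded by
\[
\frac14\int_\Omega\frac{|\nabla n|^2}{n}\varphi^4+s_0^2\int_\Omega \frac{n|\nabla c|^2}{(s_1+c)^{2\gamma}}\varphi^4 .
\]
We then use Cauchy--Schwarz, $\int n^2\varphi^4$ and $\int|\nabla c|^4(s_1+c)^{-4k}\varphi^4$ with $k=\gamma/2$. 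Note that $k>1/2$ would require $\gamma>1$, so instead we choose $k\in(1/2,\gamma]$ and use \eqref{CLOWER} to convert the weight; when $\gamma\le1$ we take $k$ slightly above $1/2$ with the remaining power bounded below. Both factors are controlled by Lemma~\ref{LEMLOCINT}:
\begin{itemize}
\item $\int n^2\varphi^4\le C_1(\int\frac{|\nabla n|^2}{n}\varphi^4+C_\delta)$ by \eqref{LEMLOCINT1};
\item \eqref{LEMLOCINT2} bounds the $\nabla c$ quartic by the small prefactor $\|\nabla c/(s_1+c)^k\|_{L^2(\Omega\cap B_\delta)}$ times $\int\frac{|\nabla n|^2}{n}\varphi^4+C_\delta$.
\end{itemize}
Here the smallness comes from Proposition~\ref{KEYPRO}, applied with $\beta=2k-1>1$ (i.e.\ choosing $k>1$, which is possible after again using \eqref{CLOWER} to replace $(s_1+c)^{-2\gamma}$ by $(s_1+c)^{-2k}$ at the cost of constants when $k\ge\gamma$; when $k<\gamma$ it is directly fine). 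The other factor $\|\nabla c/(s_1+c)^{(3-k)/2}\|$ is uniformly bounded by \eqref{NABLACL2}, since $(3-k)$ combined with \eqref{CLOWER} gives a weight of power $>1$ times a bounded factor. For $\delta<\delta_*$ the coefficient in front of the quartic on the right of \eqref{LEMLOCINT2} is $\le\frac12$, so it is absorbed. The product term is then at most $\epsilon\int\frac{|\nabla n|^2}{n}\varphi^4+C_\delta(1+\|u\|_{L^{12/5}}^{10\cdot\ldots})$. The $u$ term is handled by the $L^2$ bound of Lemma~\ref{UBDD} together with Gagliardo--Nirenberg and the time-integrated $\|\nabla u\|_{L^2}^2$ bound; I expect this exponent bookkeeping, making it integrable in time (perhaps requiring a refined power via the smallness prefactor), to be the main obstacle. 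In the fluid-free case this term is absent.

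\emph{Step 2: boundary-cutoff terms.} These contain $\nabla\varphi^4$, with $|\nabla\varphi|\le K\varphi^{1/2}$:
\begin{itemize}
\item the $(\ln n+1)\nabla n\cdot\nabla\varphi^4$ term is bounded by $\frac18\int\frac{|\nabla n|^2}{n}\varphi^4+C_\delta\int n(\ln n+1)^2\varphi^{2}$, and the latter is controlled by $\int n^2\varphi^4$ plus constants, hence by \eqref{LEMLOCINT1} with a small multiple after splitting $n\le M$ and $n>M$;
\item the $u$ term is handled via $|n\ln n|\le C(n^{3/2}+1)$, Hölder with $\|u\|_{L^4}$ (bounded by $\|u\|_{L^2}^{1/2}\|\nabla u\|_{L^2}^{1/2}$), and \eqref{LEMLOCINT1};
\item the last term is treated like the chemotactic term.
\end{itemize}

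\emph{Step 3: closing.} We obtain
\[
y'+\tfrac14\int\frac{|\nabla n|^2}{n}\varphi^4\le h(t),\qquad y=\int(n\ln n+e^{-1})\varphi^4 ,
\]
with $\frac1\tau\int_t^{t+\tau}h\le C$ by Lemmas~\ref{LEMLLNL} and \ref{UBDD}. Adding $y$ to both sides, with $y\le C\int n\ln(n+1)+C$, we apply Lemma~\ref{EXPTINT} to get the uniform bound on $y$. Integrating the inequality over $[t,t+\tau]$ then gives the gradient bound.
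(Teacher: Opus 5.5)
\textbf{There is a genuine gap} in how you treat the chemotactic term $\int_\Omega \nabla n\cdot S\nabla c\,\varphi^4$, and therefore also the cut-off term $\int_\Omega n(\ln n+1)S\nabla c\cdot\nabla\varphi^4$.

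After Young's inequality and Cauchy--Schwarz you need $\int_\Omega |\nabla c|^4(s_1+c)^{-4\gamma}\varphi^4$, so the exponent is $k=\gamma$, not $\gamma/2$. Combining \eqref{LEMLOCINT1} with \eqref{LEMLOCINT2}, the coefficient you get in front of $\int_\Omega\frac{|\nabla n|^2}{n}\varphi^4$ is of order $s_0^2(C_1C_2\|\nabla c/(s_1+c)^k\|_{L^2(\Omega\cap B_\delta)})^{1/2}$. This is small only if that local norm is small, and Proposition~\ref{KEYPRO} gives this only for $k>1$, i.e.\ $\beta=2k-1>1$.

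However, \eqref{CLOWER} is a \emph{lower} bound on $s_1+c$. It gives $(s_1+c)^{-2\gamma}\le C(s_1+c)^{-2k}$ only when $k\le\gamma$. The case $k>\gamma$ that you invoke would need an upper bound on $c$, and that is only obtained later (Corollary~\ref{CBDDCOR}), as a consequence of this very lemma. You have the direction of this conversion reversed. Consequences:
\begin{itemize}
\item Your argument can at best close for $\gamma>1$, taking $k\in(1,\min\{\gamma,2\})$.
\item It fails exactly for $\gamma\in(\frac12,1]$, and in the fluid-free case for $\gamma\in(0,1]$.
\item For $\gamma\le\frac12$ you cannot even choose $k>\frac12$, which Lemma~\ref{LEMLOCINT} requires.
\end{itemize}
For $k\le 1$ the norm $\|\nabla c/(s_1+c)^k\|_{L^2}$ is only globally bounded, not locally small. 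So the coefficient is $O(s_0^2)$ and cannot be absorbed.

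\textbf{The missing idea is to split the sensitivity.} By Young's inequality and \eqref{CLOWER}, for every $\varepsilon>0$ one has $|S|\le\varepsilon(s_1+c)^{-k}+C_\varepsilon(s_1+c)^{-r}$ with suitable $k\in(\frac12,1)$ and $r\in(1,2)$.
\begin{itemize}
\item For the weakly decaying part, the smallness comes from $\varepsilon$ times the global bound on $\|\nabla c/(s_1+c)^k\|_{L^2(\Omega)}$ given by \eqref{NABLACL2} and \eqref{CLOWER}. The quartic term in \eqref{LEMLOCINT2} is absorbed because $\|\nabla c/(s_1+c)^{(3-k)/2}\|_{L^2(\Omega\cap B_\delta)}$ is locally small, by Proposition~\ref{KEYPRO} with $\beta=2-k>1$.
\item For the strongly decaying part, the smallness comes from Proposition~\ref{KEYPRO} with $\beta=2r-1>1$.
\item In the fluid-free case with small $\gamma$, split instead as $|S|\le\tilde\varepsilon+C(s_1+c)^{-\tilde r}$ with $\tilde r>1$. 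Then control the unweighted term by the elliptic estimate $\int_\Omega|\nabla c|^4\varphi^4\le C\int_\Omega n^2\varphi^4+C(\delta)$, obtained from $-\Delta c+c=n$.
\end{itemize}

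\textbf{Two smaller remarks.}
\begin{itemize}
\item The fluid contribution you flag as the main obstacle is harmless. The bound $(\int_\Omega|u|^{12/5})^5\le C\|u\|_{L^2}^{10}\|\nabla u\|_{L^2}^2$ enters linearly after Young's inequality and is time-integrable by Lemma~\ref{UBDD}.
\item In Step~3, Lemma~\ref{LEMLLNL} is only established for $\gamma>\frac12$, $s_1>0$. In the fluid-free case, bound $\int_\Omega(n\ln n+e^{-1})\varphi^4\le\frac18\int_\Omega\frac{|\nabla n|^2}{n}\varphi^4+C(\delta)$ directly, using $a\ln a\le 2e^{-1}a^{3/2}$, Cauchy--Schwarz and \eqref{LEMLOCINT1}.
\end{itemize}
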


\begin{proof}
For simplicity, let $\varphi=\varphi_\delta$ be given by Lemma~\ref{LEMSTEST}.
Throughout, $C$ denotes a generic constant, and we will explicitly track its dependence on $\delta$. Since $n>0$ and $\varphi$ is independent of $t$, we have
\[
n_t \ln n\,\varphi^4 = \partial_t(n\ln n-n)\,\varphi^4.
\]
Multiplying \eqref{KS01}$_1$ by $\ln n\,\varphi^{4}$ and integrating over $\Omega$, we obtain  
\begin{equation}\label{MAR66}
\begin{aligned}
 &\frac{d}{dt}\int_{\Omega}  n\ln n \varphi^{4}- \frac{d}{dt}\int_{\Omega}n\varphi^{4} + \int_{\Omega} \frac{|\nabla n|^{2}}{n}\varphi^{4} \\
&\quad  = -\int_{\Omega} \ln n\nabla n\cdot \nabla \varphi^{4}  -\int_{\Omega} n u \cdot \nabla \varphi^4 + \int_{\Omega} \left( n \ln n \right)u \cdot \nabla \varphi^4 \\
&\qquad + \int_{\Omega}\nabla n  \cdot ( S(x,n,c)\cdot\nabla c) \varphi^{4}
  + \int_{\Omega}n\ln n (S(x,n,c)\cdot\nabla c  ) \cdot\nabla \varphi^4  \\
&\quad  =: I_1 + I_2 + I_3 + I_4 + I_5.
\end{aligned}
\end{equation}
Here we used integration by parts with no-flux boundary condition $\left(\nabla n - n S(x,n,c) \cdot \nabla c\right)\cdot \nu$ on $\partial \Omega$
\[\begin{aligned}
&\int_{\Omega}\Delta n \,\ln n\,\varphi^{4} -\int_{\Omega}\nabla\cdot\bigl(nS(x,n,c) \cdot \nabla c\bigr)\,\ln n\,\varphi^{4} \\
%&\quad = \int_{\Omega} \left[- \nabla n  + n S(x,n,c) \cdot \nabla c \right]\cdot \nabla(\ln n \, \varphi^4)\\
&\quad =-\int_{\Omega}\frac{|\nabla n|^{2}}{n}\varphi^{4}
-\int_{\Omega}\ln n\,\nabla n\cdot\nabla\varphi^{4}
+\int_{\Omega} n\,\bigl(S(x,n,c)\nabla c\bigr)\cdot\nabla(\ln n\,\varphi^{4}),
\end{aligned}\]
integration by parts with $\nabla \cdot u =0 $ and $u=0$ on $\partial \Omega$
\[
-\int_{\Omega} u\cdot\nabla n\,\ln n\,\varphi^{4}
= -\int_{\Omega} u\cdot\nabla(n\ln n-n)\,\varphi^{4}
= \int_{\Omega}(n\ln n-n)\,u\cdot\nabla\varphi^{4}.
\]

\smallskip

Let us estimate $I_1$, $I_2$, and $I_3$ of \eqref{MAR66}, diffusion and fluid terms. By Young's inequality, we have 
\[
I_1 \le \frac{1}{8}\int_{\Omega} \frac{|\nabla n|^{2}}{n}\varphi^{4}+ 32\int_{\Omega} n|\ln n|^{2} \varphi^{2} |\nabla \varphi|^{2}. 
\]
Using $a|\ln a|^{2}\le 16e^{-2}a^{\frac{3}{2}}+4e^{-2}$ for $a\ge0$ and Lemma~\ref{LEMSTEST} (i.e.\ $|\nabla\varphi|\le K\varphi^{1/2}$),
\[\begin{aligned}
I_1 &\leq  \frac{1}{8}\int_{\Omega} \frac{|\nabla n|^{2}}{n}\varphi^{4}+ 32\int_{\Omega} (16e^{-2}n^{\frac{3}{2}}+4e^{-2}) \varphi^{2} |\nabla \varphi|^{2}
\\&
\le  \frac{1}{8}\int_{\Omega} \frac{|\nabla n|^{2}}{n}\varphi^{4}
+ C K^{2} \int_{\Omega}n^{\frac{3}{2}}\varphi^{3}+C K^{2}\int_{\R^{2}}\varphi^{3}.
\end{aligned}\]
Moreover, by Young's inequality and the mass conservation of $n$,
\begin{equation}\label{S2}
\left(\int_{\Omega}n^{\frac{3}{2}}\varphi^{2}\right)^{2}\le \int_{\Omega}n\int_{\Omega}n^{2}\varphi^{4}
=\|n_{0}\|_{L^{1}(\Omega)}\int_{\Omega}n^{2}\varphi^{4}. 
\end{equation}
Applying \eqref{LEMLOCINT1} and Young's inequality yields
\begin{equation}\label{FORTHTERM}
I_1\le\frac{1}{4}\int_{\Omega} \frac{|\nabla n|^{2}}{n}\varphi^{4}+C(\delta),
\end{equation}
because $K$ is depending on $\delta$ in Lemma~\ref{LEMSTEST}. 

Now, by H\"{o}lder's inequality, Lemma~\ref{UBDD}, \eqref{LEMLOCINT1}, and by Young's inequality, we obtain that 
\begin{align*}
\begin{aligned}
I_2
&\le 2\|\nabla \varphi^{2} \|_{L^{\infty}(\R^{2})}\|u\|_{L^{2}(\Omega)}\bke{\int_{\Omega}n^{2}\varphi^{4}}^{\frac{1}{2}}
\\& \le 2 C_{1}^{\frac{1}{2}} \| \varphi^{2}  \|_{W^{1,\infty}(\R^{2})}\|u\|_{L^{2}(\Omega)} \bke{  \int_{\Omega} \frac{|\nabla n|^{2}}{n}\varphi^{4}  +\| \varphi^{2}  \|_{W^{1,\infty}(\R^{2})}^{2}}^{\frac{1}{2}} 
\\& \le 4 C_1\| \varphi^{2}  \|_{W^{1,\infty}(\R^{2})}^{2}\|u\|_{L^{2}(\Omega)}^{2}   +\frac{1}{8}\bke{  \int_{\Omega} \frac{|\nabla n|^{2}}{n}\varphi^{4}  +\| \varphi^{2}  \|_{W^{1,\infty}(\R^{2})}^{2}}. 
\end{aligned}
\end{align*}
Next, we use H\"older's  inequality, $a^{\frac{12}{7}}|\ln a|^{\frac{12}{7}}\le 6^{\frac{12}{7}}e^{-\frac{12}{7}} a^{2}+e^{-\frac{12}{7}}$ for $a\ge0$, and Lemma~\ref{LEMSTEST} that 
\begin{align*}
\begin{aligned}
I_3
&\le 4\bke{\int_{\Omega} |u|^{\frac{12}{5}}}^{\frac{5}{12}}\bke{\int_{\Omega}n^{\frac{12}{7}}|\ln n|^{\frac{12}{7}} |\nabla \varphi|^{\frac{12}{7}} \varphi^{\frac{36}{7}}  }^{\frac{7}{12}}
\\ &\le 4\bke{\int_{\Omega} |u|^{\frac{12}{5}}}^{\frac{5}{12}}\bke{6^{\frac{12}{7}}e^{-\frac{12}{7}}K^{\frac{12}{7}}\int_{\Omega}n^{2}  \varphi^{6}  +e^{-\frac{12}{7}}K^{\frac{12}{7}}\int_{\Omega}   \varphi^{6}}^{\frac{7}{12}}
\\ &\le \frac{1}{16}\int_{\Omega} \frac{|\nabla n|^{2}}{n}\varphi^{4}+\bke{\int_{\Omega} |u|^{\frac{12}{5}}}^{5}+C(\delta),
\end{aligned}
\end{align*}
by applying \eqref{LEMLOCINT1} and Young's inequality for some constant $C(\delta)>0$. 
Therefore, we have 
\begin{equation}\label{I2+I3}
I_2 + I_3 \leq \frac{3}{16}\int_{\Omega} \frac{|\nabla n|^{2}}{n}\varphi^{4}+\bke{\int_{\Omega} |u|^{\frac{12}{5}}}^{5}+C(\delta).
\end{equation}

%To obtain an estimate for $I_1$ on \eqref{MAR66}, the Young's inequality yields that 
%\[
%I_1 \le \frac{1}{8}\int_{\Omega} \frac{|\nabla n|^{2}}{n}\varphi^{4}+ 32\int_{\Omega} n|\ln n|^{2} \varphi^{2} |\nabla \varphi|^{2}. 
%\]
%Then we apply the inequality, $a|\ln a|^{2}\le 16e^{-2}a^{\frac{3}{2}}+4e^{-2}$ for $a\ge0$, and Lemma~\ref{LEMSTEST}, that gives 
%\[\begin{aligned}
%I_1 &\leq  \frac{1}{8}\int_{\Omega} \frac{|\nabla n|^{2}}{n}\varphi^{4}+ 32\int_{\Omega} (16e^{-2}n^{\frac{3}{2}}+4e^{-2}) \varphi^{2} |\nabla \varphi|^{2}
%\\&
%\le  \frac{1}{8}\int_{\Omega} \frac{|\nabla n|^{2}}{n}\varphi^{4}+ 32\cdot 16e^{-2}K^{2} \int_{\Omega}n^{\frac{3}{2}}\varphi^{3}+32\cdot 4 e^{-2}K^{2}\int_{\Omega}\varphi^{3}.
%\end{aligned}\]
%  Since the following inequality by the Young's inequality and mass conservation property
%\begin{equation}\label{S2}
%\bke{\int_{\Omega}n^{\frac{3}{2}}\varphi^{2}}^{2}\le \int_{\Omega}n\int_{\Omega}n^{2}\varphi^{4}=\|n_{0}\|_{L^{1}(\Omega)}\int_{\Omega}n^{2}\varphi^{4}, 
%\end{equation}
%we have 
%\begin{equation*}
%\begin{aligned}
%I_1 \le  \frac{1}{8}\int_{\Omega} \frac{|\nabla n|^{2}}{n}\varphi^{4}+ 32\cdot 16 e^{-2}K^{2}  \|n_{0}\|_{L^{1}(\Omega)}^{\frac{1}{2}}\bke{\int_{\Omega}n^{2}\varphi^{4}}^{\frac{1}{2}}+32\cdot 4 e^{-2}K^{2}\int_{\R^{2}}\varphi^{3} 
%\end{aligned}
%\end{equation*}
%By applying \eqref{LEMLOCINT1} and Young's inequality, there is constant $C=C(\delta)>0$ such that 
%\begin{equation}\label{FORTHTERM}
%I_1\le\frac{1}{4}\int_{\Omega} \frac{|\nabla n|^{2}}{n}\varphi^{4}+C(\delta).
%\end{equation}

% Handling III 

Now we derive estimates for the chemotaxis terms \(I_4\) and \(I_5\) in \eqref{MAR66}, considering separately the cases \(\gamma>\tfrac12\) and \(s_1>0\), or \(\gamma>0\), \(s_1\ge0\), and the system is fluid-free.

$\bullet$ \emph{Case 1: $\gamma>\frac{1}{2}$ and $s_{1}>0$.}
First of all, from \eqref{KS02}, Young's inequality and \eqref{CLOWER}, we note that for any $\varepsilon>0$ there exist $k\in (\frac{1}{2},1)$, $r\in(1,2)$, and  $C=C(\varepsilon, k, s_0, s_1, \int_{\Omega}n_{0})>0$ such that
\begin{align}\label{SSPLIT}
\begin{aligned}
|S(x,n,c)|&\le \frac{s_{0}}{(s_{1}+c)^{\gamma}}
\\&\le \frac{\varepsilon}{(s_{1}+c)^{k}}+\frac{C}{(s_{1}+c)^{r}}.
\end{aligned}
\end{align}

Using H\"{o}lder inequality, \eqref{SSPLIT}, and Young's inequality, we obtain 
\begin{align}\label{TUEMAR18}
\begin{aligned}
I_4 &\le \bke{\int_{\Omega} \frac{|\nabla n|^{2}}{n}\varphi^{4}}^{\frac{1}{2}}
\bke{\int_{\Omega}n^{2}\varphi^{4}}^{\frac{1}{4}}\bkt{\varepsilon\bke{\int_{\Omega}\frac{|\nabla c|^{4}}{(s_{1}+c)^{4k}}\varphi^{4}}^{\frac{1}{4}}+C\bke{\int_{\Omega}\frac{|\nabla c|^{4}}{(s_{1}+c)^{4r}}\varphi^{4}}^{\frac{1}{4}}}
\\&\le\frac{1}{16} \int_{\Omega} \frac{|\nabla n|^{2}}{n}\varphi^{4} +
\frac{1}{16C_{1}}\int_{\Omega}n^{2}\varphi^{4} +   C_{3}\varepsilon^{4}\int_{\Omega}\frac{|\nabla c|^{4}}{(s_{1}+c)^{4k}}\varphi^{4}+C_{4}\int_{\Omega}\frac{|\nabla c|^{4}}{(s_{1}+c)^{4r}}\varphi^{4},
\end{aligned}
\end{align}
for positive constants \(C_1\) as in \eqref{LEMLOCINT1} and \(C_3\), both independent of \(\varepsilon\), and for a constant \(C_4=C_4(\varepsilon)\).

Note that, by \eqref{NABLACL2},
$\int_{\Omega}c=\int_{\Omega}n_{0}$, and \eqref{CLOWER}, there exists  $C=C(k, r, \Omega, s_1, \|n_0\|_{L^1(\Omega)})>0$   such that
\begin{equation}\label{FEB10}
\sup_{t<T_{\rm max}}\bke{\norm{\frac{\nabla c}{(s_{1}+c)^{k}}(\cdot,t)}_{L^{2}(\Omega)}+\norm{\frac{\nabla c}{(s_{1}+c)^{\frac{3-r}{2}}}(\cdot,t)}_{L^{2}(\Omega)}^{2}}\le C.
\end{equation}
Thus, by Proposition~\ref{KEYPRO} with $\beta= 2-k >1$ and $\beta=2r-1>1$, there exists $\delta_{1}>0$ independent of $q\in\overline{\Omega}$ such that, for $\delta\in (0, \delta_{1})$,
\[
 C_{2}(k)\norm{\frac{\nabla c}{(s_{1}+c)^{k}}}_{L^{2}(\Omega\cap B_{\delta})} \norm{\frac{\nabla c}{(s_{1}+c)^{\frac{3-k}{2}}}}_{L^{2}(\Omega\cap B_{\delta})}^{2}  +C_{2}(r)\norm{\frac{\nabla c}{(s_{1}+c)^{r}}}_{L^{2}(\Omega\cap B_{\delta})} \norm{\frac{\nabla c}{(s_{1}+c)^{\frac{3-r}{2}}}}_{L^{2}(\Omega\cap B_{\delta})}^{2}  \le \frac{1}{2}
\]
with $C_2 (k) >0$ and $C_2 (r) >0$ as in \eqref{LEMLOCINT2}.
Hence, applying \eqref{LEMLOCINT2} with exponent $k$ and $r$, we obtain 
\begin{align}\label{LEMLOCINT11}
\begin{aligned} 
 \int_{\Omega} &\frac{|\nabla c|^{4}}{(s_{1}+c)^{4k}}\varphi^{4}
   \le
   2C_{2}(k)\norm{\frac{\nabla c}{(s_{1}+c)^{k}}}_{L^{2}(\Omega\cap B_{\delta})}
\bke{   \int_{\Omega} \frac{|\nabla n|^{2}}{n}\varphi^{4}   + C(\delta) }
+2C_{2}(k)\bke{\int_{\Omega}|u|^{\frac{12}{5}} }^{5},
\end{aligned}
\end{align}
and
\begin{align}\label{LEMLOCINT12}
\begin{aligned} 
 \int_{\Omega} &\frac{|\nabla c|^{4}}{(s_{1}+c)^{4r}}\varphi^{4}
   \le
   2C_{2}(r)\norm{\frac{\nabla c}{(s_{1}+c)^{r}}}_{L^{2}(\Omega\cap B_{\delta})}
\bke{   \int_{\Omega} \frac{|\nabla n|^{2}}{n}\varphi^{4}   +C(\delta)}
+2C_{2}(r)\bke{\int_{\Omega}|u|^{\frac{12}{5}} }^{5},
\end{aligned}
\end{align}
for all $\delta \in (0, \delta_1)$ and for a positive constant $C(\delta)= \| \varphi^{2}  \|_{W^{1,\infty}(\R^{2})}^{2}  +\|\varphi^{\frac{4}{3}}\|_{W^{2,2}(\R^{2})}^{3}$ by Lemma~\ref{LEMSTEST}. 
%\begin{align}\label{LEMLOCINT11}
%\begin{aligned} 
% \int_{\Omega} &\frac{|\nabla c|^{4}}{(s_{1}+c)^{4k}}\varphi^{4}
%   \\&\le
%   2C_{2}(k)\norm{\frac{\nabla c}{(s_{1}+c)^{k}}}_{L^{2}(\Omega\cap B_{\delta})}
%\bke{   \int_{\Omega} \frac{|\nabla n|^{2}}{n}\varphi^{4}   +\| \varphi^{2}  \|_{W^{1,\infty}(\R^{2})}^{2}  +\|\varphi^{\frac{4}{3}}\|_{W^{2,2}(\R^{2})}^{3}}
%+2C_{2}(k)\bke{\int_{\Omega}|u|^{\frac{12}{5}} }^{5},
%\end{aligned}
%\end{align}
%\begin{align}\label{LEMLOCINT12}
%\begin{aligned} 
% \int_{\Omega} &\frac{|\nabla c|^{4}}{(s_{1}+c)^{4r}}\varphi^{4}
%   \\&\le
%   2C_{2}(r)\norm{\frac{\nabla c}{(s_{1}+c)^{r}}}_{L^{2}(\Omega\cap B_{\delta})}
%\bke{   \int_{\Omega} \frac{|\nabla n|^{2}}{n}\varphi^{4}   +\| \varphi^{2}  \|_{W^{1,\infty}(\R^{2})}^{2}  +\|\varphi^{\frac{4}{3}}\|_{W^{2,2}(\R^{2})}^{3}}
%+2C_{2}(r)\bke{\int_{\Omega}|u|^{\frac{12}{5}} }^{5}.
%\end{aligned}
%\end{align}

%Here, by \eqref{NABLACL2}, $\int_{\Omega}c=\int_{\Omega}n_{0}$, and \eqref{CLOWER}, we note that there is a constant $C>0$ such that 
%\[
%\sup_{t<T_{\rm max}}\norm{\frac{\nabla c}{(s_{1}+c)^{k}}(\cdot,t)}_{L^{2}(\Omega)}\le C.
%\]
 
In view of \eqref{FEB10} and the third term on RHS of \eqref{TUEMAR18}, we can pick a sufficiently small $\varepsilon >0$ satisfying 
\begin{equation}\label{CONTROLK}
 2 C_{2}(k)C_{3}\varepsilon^{4}\sup_{t<T_{\rm max}}\norm{\frac{\nabla c}{(s_{1}+c)^{k}}(\cdot,t)}_{L^{2}(\Omega)}\le\frac{1}{32}.
\end{equation}
With such $\varepsilon$, due to Proposition~\ref{KEYPRO}, we can find $\delta_{*}\in (0, \delta_1)$independent of $q\in\overline{\Omega}$ such that 
\begin{equation}\label{DELSTAR}
\left( C_{2}(r)C_{4}+ \frac{1}{4}C_{2}(r) \right)\sup_{t<T_{\rm max}} \norm{\frac{\nabla c}{(s_{1}+c)^{r}}}_{L^{2}(\Omega\cap B_{\delta})} \le \frac{1}{16} 
\quad\mbox{for all } \delta \in (0, \delta_{*}).
\end{equation}
For any fixed $\delta \in (0, \delta_{*})$, then it follows from \eqref{TUEMAR18}, Lemma~\ref{LEMLOCINT}, and \eqref{CONTROLK} that  
\begin{align*}
\begin{aligned}
I_4 
%&\le   \frac{1}{8} \int_{\Omega} \frac{|\nabla n|^{2}}{n}\varphi^{4} 
%+C_{2}(k)C_{3}\varepsilon^{4}\norm{\frac{\nabla c}{(s_{1}+c)^{k}}}_{L^{2}(\Omega\cap B_{\delta})}\int_{\Omega} \frac{|\nabla n|^{2}}{n}\varphi^{4} 
%\\&\quad +  C_{2}(r)C_{4}\norm{\frac{\nabla c}{(s_{1}+c)^{r}}}_{L^{2}(\Omega\cap B_{\delta})}
% \int_{\Omega} \frac{|\nabla n|^{2}}{n}\varphi^{4} + \left(C_{2}(k)C_{3}\varepsilon^{4}+C_{2}(r)C_{4}\right)\bke{\int_{\Omega}|u|^{\frac{12}{5}} }^{5}+C(\delta)
 \le  \left(\frac{5}{32} +C_{2}(r)C_{4}\norm{\frac{\nabla c}{(s_{1}+c)^{r}}}_{L^{2}(\Omega\cap B_{\delta})} \right)
 \int_{\Omega} \frac{|\nabla n|^{2}}{n}\varphi^{4} 
 +\left(C_{2}(k)C_{3}\varepsilon^{4}+C_{2}(r)C_{4}\right)\bke{\int_{\Omega}|u|^{\frac{12}{5}} }^{5}+ C(\delta).
\end{aligned}
\end{align*}

Moreover, by \eqref{SSPLIT}, Young's inequality, $a^{\frac{4}{3}}|\ln a|^{\frac{4}{3}}\le 16e^{-\frac{4}{3}} a^{\frac{3}{2}}+e^{-\frac{4}{3}}$ for $a\ge0$, Lemma~\ref{LEMSTEST}, we have
\begin{align*}
\begin{aligned}
I_5 
%&= \left| \int_{\Omega}n\ln n  ( S(x,n,c)  \cdot \nabla c  )  \cdot\nabla \varphi^{4}  \right|
&\le  4\varepsilon\int_{\Omega} n|\ln n| \frac{|\nabla c|}{(s_{1}+c)^{k}}|\nabla \varphi| \varphi^{3}+ 4C\int_{\Omega} n|\ln n| \frac{|\nabla c|}{(s_{1}+c)^{r}}|\nabla \varphi| \varphi^{3}
\\&
\le C_{3}\varepsilon^{4}\int_{\Omega}\frac{|\nabla c|^{4}}{(s_{1}+c)^{4k}}\varphi^{4}+\frac{1}{4}\int_{\Omega}\frac{|\nabla c|^{4}}{(s_{1}+c)^{4r}}\varphi^{4}+C_{5}\int_{\Omega} n^{\frac{4}{3}}|\ln n|^{\frac{4}{3}}\varphi^{\frac{8}{3}}|\nabla \varphi|^{\frac{4}{3}}
\\&
\le C_{3}\varepsilon^{4}\int_{\Omega}\frac{|\nabla c|^{4}}{(s_{1}+c)^{4k}}\varphi^{4}+\frac{1}{4}\int_{\Omega}\frac{|\nabla c|^{4}}{(s_{1}+c)^{4r}}\varphi^{4}+C_{5}K^{\frac{4}{3}}\int_{\Omega} (16e^{-\frac{4}{3}} n^{\frac{3}{2}}+e^{-\frac{4}{3}})\varphi^{\frac{10}{3}} 
\\&
\le C_{3}\varepsilon^{4}\int_{\Omega}\frac{|\nabla c|^{4}}{(s_{1}+c)^{4k}}\varphi^{4}
+\frac{1}{4}\int_{\Omega}\frac{|\nabla c|^{4}}{(s_{1}+c)^{4r}}\varphi^{4}
+16C_{5}K^{\frac{4}{3}} e^{-\frac{4}{3}}\|n_{0}\|_{L^{1}(\Omega)}^{\frac{1}{2}}\bke{\int_{\Omega}n^{2}\varphi^{4}}^{\frac{1}{2}}
+C(\delta)
%+C_{5}K^{\frac{4}{3}}e^{-\frac{4}{3}} \int_{\R^{2}} \varphi^{\frac{10}{3}},
\end{aligned}
\end{align*}
for a constant $C_{5}(\varepsilon)>0$.
By using Lemma~\ref{LEMLOCINT}, \eqref{CONTROLK} and Young's inequality, we have
\begin{equation}
\begin{aligned}
I_5
\le  \left(\frac{1}{16} +\frac{1}{4}C_{2}(r)  \norm{\frac{\nabla c}{(s_{1}+c)^{r}}}_{L^{2}(\Omega\cap B_{\delta})} \right)\int_{\Omega} \frac{|\nabla n|^{2}}{n}\varphi^{4}+\left(C_{2}(k)C_{3}\varepsilon^{4}+\frac{1}{4}C_{2}(r)\right)\bke{\int_{\Omega}|u|^{\frac{12}{5}} }^{5}
+C(\delta).
\end{aligned}
\end{equation}
Therefore, by \eqref{DELSTAR}, we have 
\begin{equation}\label{I4+I5}
I_4 + I_5 \leq \frac{9}{32}\int_{\Omega} \frac{|\nabla n|^{2}}{n}\varphi^{4} +\left(2C_{2}(k)C_{3}\varepsilon^{4}+C_{2}(r)C_{4}+\frac{1}{4}C_{2}(r)\right)\bke{\int_{\Omega}|u|^{\frac{12}{5}} }^{5}
+C(\delta).
\end{equation}

Now the combination of estimates for all $I_i$, $i=1, \ldots, 5$ in \eqref{FORTHTERM}, \eqref{I2+I3}, and \eqref{I4+I5} to \eqref{MAR66} with \eqref{DELSTAR} yields the following, for some  $C(\delta)>0$, 
\begin{align}\label{LEM6PF1}
\begin{aligned} 
\frac{d}{dt} \int_{\Omega}  n\ln n\varphi^{4}-&\frac{d}{dt}\int_{\Omega}n\varphi^{4} +\frac{5}{32} \int_{\Omega} \frac{|\nabla n|^{2}}{n}\varphi^{4}
 \\&\le   \left( 2C_{2}(k)C_{3}\varepsilon^{4}+C_{2}(r)C_{4}+\frac{1}{4}C_{2}(r)+1\right)\bke{\int_{\Omega}|u|^{\frac{12}{5}} }^{5}+C(\delta). 
\end{aligned}
\end{align}

Let us define
\begin{equation}\label{FunctionG}
\mathcal{G}(t):=\int_{\Omega}  (n\ln n+e^{-1}) \varphi^{4}-\int_{\Omega}n\varphi^{4}.
\end{equation}
Note that $\int_\Omega n\varphi^4\le \int_\Omega n_0$ by the mass conservation of $n$ and $0\le\varphi\le 1$.
Moreover, by using $a\ln a\le 2e^{-1}a^{\frac{3}{2}}$ for $a\ge0$, $ \varphi  \le 1$, \eqref{S2}, \eqref{LEMLOCINT1}, and Young's inequality, there is $C(\delta)>0$  such that
\begin{align}\label{LEM6PF2}
\begin{aligned}
\int_{\Omega}  (n\ln n+e^{-1}) \varphi^{4}-\int_{\Omega}n\varphi^{4} &\le \int_{\Omega} 2e^{-1}n^{\frac{3}{2}} \varphi^{4}+e^{-1}\int_{\Omega}\varphi^{4}
\\&\le 2e^{-1} \|n_{0}\|_{L^{1}(\Omega)}^{\frac{1}{2}}\bke{\int_{\Omega}n^{2}\varphi^{4}}^{\frac{1}{2}} +e^{-1}\int_{\Omega}\varphi^{4}
\\&
\le  \frac{1}{8}\int_{\Omega} \frac{|\nabla n|^{2}}{n}\varphi^{4}+C(\delta).
\end{aligned}
\end{align}
By adding \eqref{LEM6PF2} to \eqref{LEM6PF1}, it follows that 
\[
\frac{d}{dt}\mathcal{G}(t)+\mathcal{G}(t) +\frac{1}{32}\int_{\Omega} \frac{|\nabla n|^{2}}{n}\varphi^{4}\le \left( 2C_{2}(k)C_{3}\varepsilon^{4}+C_{2}(r)C_{4}+\frac{1}{4}C_{2}(r)+1 \right) \bke{\int_{\Omega}|u|^{\frac{12}{5}} }^{5}+C(\delta).
\]
Finally, by the Gagliardo-Nirenberg inequality,
\begin{equation}\label{U125}
\bke{\int_{\Omega}|u|^{\frac{12}{5}} }^{5}\le C(\Omega) \|u\|_{L^{2}(\Omega)}^{10}\|\nabla u\|_{L^{2}(\Omega)}^{2},
\end{equation}
Lemma~\ref{UBDD} implies that the time average of $\left(\int_\Omega |u|^{12/5}\right)^5$ over intervals of length $\tau$ is uniformly bounded. The desired results for the first case is obtained by applying Lemma~\ref{EXPTINT} to the above differential inequality for $\mathcal{G}$. 

%the desired result for the first case follows from Lemma~\ref{EXPTINT} and Lemma~\ref{UBDD}.

%% fluid-free case
\smallskip
$\bullet$ \emph{Case 2: $\gamma>0$, $s_{1}\ge 0$, and fluid-free.} 
Because $u \equiv 0$, \eqref{MAR66} simplifies to 
\begin{equation}\label{MAR66_FF}
\begin{aligned}
 &\frac{d}{dt}\int_{\Omega}  n\ln n \varphi^{4}- \frac{d}{dt}\int_{\Omega}n\varphi^{4} 
 + \int_{\Omega} \frac{|\nabla n|^{2}}{n}\varphi^{4} \\
&\quad  =  -\int_{\Omega} \ln n\nabla n\cdot \nabla \varphi^{4} 
  + \int_{\Omega}\nabla n  \cdot ( S(x,n,c)\cdot\nabla c) \varphi^{4}
  + \int_{\Omega}n\ln n (S(x,n,c)\cdot\nabla c  ) \cdot\nabla \varphi^4 \\
&\quad =: II_1 + II_2 + II_3.
\end{aligned}
\end{equation}
The estimate for $II_1$ is the same as \eqref{FORTHTERM}. 

By \eqref{KS02} and Young's inequality, for any $\tilde{\varepsilon}>0$, there exists  $\tilde{r}>1$, and $C=C(\tilde{\varepsilon}, s_{0},\tilde{r})>0$ satisfying 
\begin{align}\label{SSPLIT2}
\begin{aligned}
|S(x,n,c)|&\le \frac{s_{0}}{(s_{1}+c)^{\gamma}}
\\&\le  \tilde{\varepsilon}+\frac{C}{(s_{1}+c)^{\tilde{r}}}.
\end{aligned}
\end{align}
Note also that 
 by \eqref{NABLACL2},
$\int_{\Omega}c=\int_{\Omega}n_{0}$, and \eqref{CLOWER}, there exists  $C=C(\tilde{r}, \Omega, s_1, \int_{\Omega}n_0)>0$   such that
\begin{equation}\label{FEB10_FF}
\sup_{t<T_{\rm max}} \norm{\frac{\nabla c}{(s_{1}+c)^{\frac{3-\tilde{r}}{2}}}(\cdot,t)}_{L^{2}(\Omega)}^{2} \le C.
\end{equation}
Thus, by Proposition~\ref{KEYPRO}, there exists $\tilde{\delta_{1}}>0$ independent of $q\in\overline{\Omega}$ such that for $\delta<\tilde{\delta_{1}}$
\[
C_{2}(\tilde{r})\norm{\frac{\nabla c}{(s_{1}+c)^{\tilde{r}}}}_{L^{2}(\Omega\cap B_{\delta})} \norm{\frac{\nabla c}{(s_{1}+c)^{\frac{3-\tilde{r}}{2}}}}_{L^{2}(\Omega\cap B_{\delta})}^{2}  \le \frac{1}{2}
\]
with $C_2 (\tilde{r}) >0$ as in \eqref{LEMLOCINT2}.
Let $\delta\in (0, \tilde{\delta_{1}})$. From \eqref{LEMLOCINT2}, we have
\begin{align}\label{LEMLOCINT13}
\begin{aligned} 
 \int_{\Omega} &\frac{|\nabla c|^{4}}{(s_{1}+c)^{4\tilde{r}}}\varphi^{4}
   \le
   2C_{2}(\tilde{r})\norm{\frac{\nabla c}{(s_{1}+c)^{\tilde{r}}}}_{L^{2}(\Omega\cap B_{\delta})}
\bke{   \int_{\Omega} \frac{|\nabla n|^{2}}{n}\varphi^{4}   +C(\delta)}
+2C_{2}(\tilde{r})\bke{\int_{\Omega}|u|^{\frac{12}{5}} }^{5}.
\end{aligned}
\end{align}
for a positive constant $C(\delta)=\| \varphi^{2}  \|_{W^{1,\infty}(\R^{2})}^{2}  +\|\varphi^{\frac{4}{3}}\|_{W^{2,2}(\R^{2})}^{3}$ by Lemma~\ref{LEMSTEST}. 

%We apply

Then, we use H\"older's inequality, \eqref{SSPLIT2}, and Young's inequality to obtain 
\begin{align}\label{DEC20232}
\begin{aligned}
II_2
&\le \bke{\int_{\Omega} \frac{|\nabla n|^{2}}{n}\varphi^{4}}^{\frac{1}{2}}
\bke{\int_{\Omega}n^{2}\varphi^{4}}^{\frac{1}{4}}\bkt{\tilde{\varepsilon}\bke{\int_{\Omega} |\nabla c|^{4} \varphi^{4}}^{\frac{1}{4}}+C\bke{\int_{\Omega}\frac{|\nabla c|^{4}}{(s_{1}+c)^{4\tilde{r}}}\varphi^{4}}^{\frac{1}{4}}}
\\&\le\frac{1}{16} \int_{\Omega} \frac{|\nabla n|^{2}}{n}\varphi^{4} +
\frac{1}{16C_{1}}\int_{\Omega}n^{2}\varphi^{4} +  C_{6}\tilde{\varepsilon}^{4}\int_{\Omega}|\nabla c|^{4}\varphi^{4}+C_{7}\int_{\Omega}\frac{|\nabla c|^{4}}{(s_{1}+c)^{4\tilde{r}}}\varphi^{4},
\end{aligned}
\end{align}
for positive constants $C_{6}$, independent of $\tilde{\varepsilon}$, and $C_{7}(\tilde{\varepsilon})$. 

Now let us estimate $\int_{\Omega}|\nabla c|^{4}\varphi^{4}$. 
Because $(a_{1}+a_{2})^{4}\le 8(a_{1}^{4}+a_{2}^{4})$ for $a_{1},a_{2}\ge 0$, we have
\begin{align}\label{DEC20_0}
\begin{aligned}
\int_{\Omega}|\nabla c|^{4}\varphi^{4} &=\|\nabla( c \varphi)- c\nabla \varphi   \|^{4}_{L^{4}(\Omega)}
\\& \le \bke{ \| c\varphi \|_{W^{1,4}(\Omega)}      +\|c\nabla \varphi  \|_{L^{4}(\Omega)}   }^{4}
\\& \le 8\bke{ \| c\varphi \|_{W^{1,4}(\Omega)}^{4}      +\|c\nabla \varphi  \|_{L^{4}(\Omega)}^{4}   }.
\end{aligned}
\end{align}
By H\"older's inequality, the elliptic regularity theory and $W^{2,2}(\Omega)\hookrightarrow W^{1,12}(\Omega)$, there is $C=C(\Omega)>0$ such that
\begin{align}\label{DEC20_1}
\begin{aligned}
\|c\nabla \varphi\|_{L^{4}(\Omega)}^{4} &\le \|c\|_{L^{6}(\Omega)}^{4}\|\nabla \varphi\|_{L^{12}(\Omega)}^{4}
\\& \le C\|n_{0}\|_{L^{1}(\Omega)}^{4}\|\varphi\|_{W^{2,2}(\R^{2})}^{4}.
\end{aligned}
\end{align}
Similarly, the elliptic regularity theory and  $W^{2,\frac{4}{3}}(\Omega) \hookrightarrow W^{1,4}(\Omega)$ yields that with some $C=C(\Omega)>0$
\begin{equation}\label{DEC20_2}
\| c\varphi \|_{W^{1,4}(\Omega)}^{4} \le C\bke{  \| \Delta (c\varphi) \|_{L^{\frac{4}{3}}(\Omega)}^{4}+ \|  c\varphi \|_{L^{\frac{4}{3}}(\Omega)}^{4}  + \|c\nabla \varphi\cdot\nu\|_{W^{\frac{1}{4},\frac{4}{3}}(\partial\Omega)}^{4} },
\end{equation}
where we used
$
\nabla (c\varphi)\cdot\nu =c\nabla \varphi\cdot\nu$ on  $\partial\Omega$.
Using $ \varphi \le 1$, the standard elliptic regularity theory and $\int_{\Omega}n=\int_{\Omega}n_{0}$, we can find
$C=C(\Omega)>0$  satisfying
\begin{align}\label{DEC20_3}
\begin{aligned}
\|  c\varphi \|_{L^{\frac{4}{3}}(\Omega)}^{4}& \le \|c\|_{L^{\frac{4}{3}}(\Omega)}^{4}
 \\&\le C\|n_{0}\|_{L^{1}(\Omega)}^{4}.
\end{aligned}
\end{align}
Note   from the trace inequality that with some
$C=C(\Omega)>0$ 
\begin{equation}\label{DEC20_4}
\|c\nabla \varphi\cdot\nu\|_{W^{\frac{1}{4},\frac{4}{3}}(\partial\Omega)}^{4}  \le C\|c\nabla \varphi\|_{W^{1,\frac{4}{3}}(\Omega)}^{4}.
\end{equation}
By H\"older's inequality and $W^{2,2}(\Omega)\hookrightarrow W^{1,12}(\Omega)$,  note also that with some $C=C(\Omega)>0$ 
\[
\|c\nabla \varphi\|_{W^{1,\frac{4}{3}}(\Omega)}^{4}\le C\bke{ \| c\|_{\frac{3}{2}(\Omega)}^{4}+\|\nabla c\|_{L^{\frac{3}{2}}(\Omega)}^{4} +\|c\|_{L^{4}(\Omega)}^{4}}\|\varphi\|_{W^{2,2}(\R^{2})}^{4}.
\]
Thus, it follows by the standard elliptic regularity theory and $\int_{\Omega}n=\int_{\Omega}n_{0}$ that with some $C=C(\Omega)>0$,
\begin{equation}\label{DEC20_5}
\|c\nabla \varphi\|_{W^{1,\frac{4}{3}}(\Omega)}^{4}\le C\|n_{0}\|_{L^{1}(\Omega)}^{4}\|\varphi\|_{W^{2,2}(\R^{2})}^{4}.
\end{equation}
Similarly, using $(a_{1}+a_{2}+a_{3}+a_{4})^{4}\le 64(a_{1}^{4}+a_{2}^{4}+a_{3}^{4}+a_{4}^{4})$ for $a_{1},a_{2},a_{3},a_{4}\ge0$, H\"older's inequality, the standard elliptic regularity theory, $\int_{\Omega}n=\int_{\Omega}n_{0}$, and $W^{2,2}(\Omega)\hookrightarrow W^{1,12}(\Omega)$, we can find $C=C(\Omega)>0$ satisfying
\begin{align*}
\begin{aligned}
\| \Delta (c\varphi) \|_{L^{\frac{4}{3}}(\Omega)}^{4} & \le 64\bke{\| (c-\Delta c) \varphi  \|_{L^{\frac{4}{3}}(\Omega)}^{4}  +\|  c \varphi  \|_{L^{\frac{4}{3}}(\Omega)}^{4} +\| \nabla  c \cdot \nabla \varphi  \|_{L^{\frac{4}{3}}(\Omega)}^{4}+\|  c \Delta \varphi  \|_{L^{\frac{4}{3}}(\Omega)}^{4} }
\\& \le 64\left( \| n\varphi  \|_{L^{\frac{4}{3}}(\Omega)}^{4} +\|  (1-\Delta)^{-1}{n} \|_{L^{\frac{3}{2}}(\Omega)}^{4} \|\varphi  \|_{L^{12}(\R^{2})}^{4} +\| \nabla(1-\Delta)^{-1}{n}\|_{L^{\frac{3}{2}}(\Omega)}^{4}  \|\nabla \varphi  \|_{L^{12}(\R^{2})}^{4} \right.
\\&\quad \left. +\|  (1-\Delta)^{-1}{n}\|_{L^{4}(\Omega)}^{4} \| \varphi  \|_{W^{2,2}(\R^{2})}^{4} \right)
\\& \le 64\|n_{0}\|_{L^{1}(\Omega)}^{2} \int_{\Omega}n^{2}\varphi^{4}  +C   \|n_{0}\|_{L^{1}(\Omega)}           \| \varphi  \|_{W^{2,2}(\R^{2})}.
\end{aligned}
\end{align*}
Combining it with  \eqref{DEC20_0}--\eqref{DEC20_5} yields that with some $C_{8}=C_{8}(\Omega)>0$ independent of $\tilde{\varepsilon}$ and $q\in\overline{\Omega}$,
\begin{equation}\label{DEC26103}
\int_{\Omega}|\nabla c|^{4}\varphi^{4} \le C_{8}\int_{\Omega}n^{2}\varphi^{4} +C(\delta).
\end{equation}
Now, we pick a sufficiently small $\tilde{\varepsilon}>0$ such that 
\begin{equation}\label{DEC26104}
C_{1}C_{6} C_{8}\tilde{\varepsilon}^{4}\le \frac{1}{32}.
\end{equation}
With such $\tilde{\varepsilon}$, due to Proposition~\ref{KEYPRO}, we can find $\delta_{*} \in (0, \tilde{\delta}_1)$ independent of $q\in\overline{\Omega}$ such that
\begin{equation}\label{DEC26105}
(C_{2}(\tilde{r}) C_{7}+\frac{1}{4}C_{2}(\tilde{r}))\sup_{t<T_{\rm max}}\norm{\frac{\nabla c}{(s_{1}+c)^{\tilde{r}}}}_{L^{2}(\Omega\cap B_{\delta})}\le \frac{1}{16}\quad \mbox{ for all } \delta \in (0, \delta_{*}).
\end{equation}
for $C_2 = C_2 (\tilde{r}) >0$ as in \eqref{LEMLOCINT2}.
Fix $\delta \in (0, \delta_{*})$. Then, by Lemma~\ref{LEMLOCINT}, it follows from \eqref{DEC20232} that
\begin{align}\label{DEC23326}
\begin{aligned}
II_2
%\biggr{|}\int_{\Omega} & \nabla n\cdot  ( S(x,n,c) \cdot \nabla c )   \,\varphi^{4}\biggr{|}
&\le \frac{5}{32} \int_{\Omega} \frac{|\nabla n|^{2}}{n}\varphi^{4} +C_{2}(\tilde{r}) C_{7} \norm{\frac{\nabla c}{(s_{1}+c)^{\tilde{r}}}}_{L^{2}(\Omega\cap B_{\delta})}\int_{\Omega} \frac{|\nabla n|^{2}}{n}\varphi^{4}+C(\delta).
\end{aligned}
\end{align}
 %To estimate the rightmost term of  \eqref{MAR66}, we 
 
By \eqref{SSPLIT2}, Young's inequality, $a^{\frac{4}{3}}|\ln a|^{\frac{4}{3}}\le 16e^{-\frac{4}{3}} a^{\frac{3}{2}}+e^{-\frac{4}{3}}$ for $a\ge0$, Lemma~\ref{LEMSTEST}, H\"older's inequality, and $\int_{\Omega}n=\int_{\Omega}n_{0}$, we obtain the following
\begin{align*}
\begin{aligned}
II_3
%\biggr{|}4&\int_{\Omega}n\ln n  ( S(x,n,c)  \cdot \nabla c  )  \cdot\nabla \varphi  \varphi^{3}\biggr{|}
&\le  4\tilde{\varepsilon}\int_{\Omega} n|\ln n| |\nabla c||\nabla \varphi| \varphi^{3}+ 4C\int_{\Omega} n|\ln n| \frac{|\nabla c|}{(s_{1}+c)^{\tilde{r}}}|\nabla \varphi| \varphi^{3}
\\&
\le C_{6}\tilde{\varepsilon}^{4}\int_{\Omega} |\nabla c|^{4} \varphi^{4}+\frac{1}{4}\int_{\Omega}\frac{|\nabla c|^{4}}{(s_{1}+c)^{4\tilde{r}}}\varphi^{4}+C_{9}\int_{\Omega} n^{\frac{4}{3}}|\ln n|^{\frac{4}{3}}\varphi^{\frac{8}{3}}|\nabla \varphi|^{\frac{4}{3}}
\\&
\le  C_{6}\tilde{\varepsilon}^{4}\int_{\Omega}|\nabla c|^{4}\varphi^{4}+\frac{1}{4}\int_{\Omega}\frac{|\nabla c|^{4}}{(s_{1}+c)^{4\tilde{r}}}\varphi^{4}+C_{9}K^{\frac{4}{3}}\int_{\Omega} (16e^{-\frac{4}{3}} n^{\frac{3}{2}}+e^{-\frac{4}{3}})\varphi^{\frac{10}{3}} 
\\&
\le C_{6}\tilde{\varepsilon}^{4}\int_{\Omega}|\nabla c|^{4}\varphi^{4}
+\frac{1}{4}\int_{\Omega}\frac{|\nabla c|^{4}}{(s_{1}+c)^{4\tilde{r}}}\varphi^{4}
+16 C_{9}K^{\frac{4}{3}} e^{-\frac{4}{3}}\|n_{0}\|_{L^{1}(\Omega)}^{\frac{1}{2}}\bke{\int_{\Omega}n^{2}\varphi^{4}}^{\frac{1}{2}} 
+ C(\delta),
%+ C_{9}K^{\frac{4}{3}}e^{-\frac{4}{3}} \int_{\R^{2}} \varphi^{\frac{10}{3}},
\end{aligned}
\end{align*}
for a constant $C_{9}= C_{9}(\tilde{\varepsilon})>0$. It then follows, by \eqref{DEC26103}, Lemma~\ref{LEMLOCINT}, \eqref{DEC26104} and Young's inequality, that with some $C(\delta)>0$ 
\begin{align*}
\begin{aligned}
II_3
%\biggr{|}4&\int_{\Omega}n\ln n  ( S(x,n,c)  \cdot \nabla c  )  \cdot\nabla \varphi  \varphi^{3}\biggr{|}
&\le \frac{1}{16}\int_{\Omega} \frac{|\nabla n|^{2}}{n}\varphi^{4}+\frac{1}{4}C_{2}(\tilde{r})  \norm{\frac{\nabla c}{(s_{1}+c)^{\tilde{r}}}}_{L^{2}(\Omega\cap B_{\delta})}\int_{\Omega} \frac{|\nabla n|^{2}}{n}\varphi^{4} 
+C(\delta).
\end{aligned}
\end{align*}
Thus, combining it with \eqref{MAR66}, \eqref{FORTHTERM}, \eqref{DEC23326}, and \eqref{DEC26105} yields  
\begin{align*}
\begin{aligned}
\frac{d}{dt} \int_{\Omega}  n\ln n\varphi^{4}-\frac{d}{dt}\int_{\Omega}n\varphi^{4} +\frac{15}{32}\int_{\Omega} \frac{|\nabla n|^{2}}{n}\varphi^{4}
\le C(\delta).
\end{aligned}
\end{align*}

Again, using \eqref{FunctionG} and \eqref{LEM6PF2}, we derive 
\[
\frac{d}{dt}\mathcal{G}(t)+\mathcal{G}(t) +\frac{11}{32}\int_{\Omega} \frac{|\nabla n|^{2}}{n}\varphi^{4}\le C(\delta).
\]
Then, the desired result is a consequence of Lemma~\ref{EXPTINT} and
$
\int_{\Omega}n\varphi^{4}\le\int_{\Omega}n_{0}$. 
\end{proof}

With the localized estimate in Lemma~\ref{LEMLOCINT} and Lemma~\ref{LEMLOGL}, we now employ a standard finite covering argument to upgrade these local bounds into global-in-space uniform estimates. This step bridges the gap between our local analysis and the global regularity needed for the main theorem.

\begin{cor}\label{COR1}
Assume that either $\gamma>\frac12$ and $s_1>0$, or $\gamma>0$, $s_1\ge0$ and the system is fluid-free.
There exists a constant $C=C (\Omega,s_0,s_1,\gamma,\textstyle\int_\Omega n_0,\|u_0\|_{L^2(\Omega)},\|\nabla\Phi\|_{L^\infty(\Omega)})>0$, with the dependence on $u_0,\Phi$ dropped in the fluid-free case, such that
\[\begin{gathered}
\sup_{t<T_{\rm max}} \int_{\Omega}   (n\ln n (\cdot,t)+e^{-1}) \le C, \\	
\int_{t}^{t+\tau}\int_{\Omega}\frac{|\nabla n|^{2}}{n}  \le C  \quad\mbox{ for all }t\in(0,T_{\rm max}-\tau), \\
\int_{t}^{t+\tau}\int_{\Omega}  \frac{|\nabla c|^{4}}{(1+c)^{4\gamma }}  \le C 
\quad\mbox{ for all }t\in(0,T_{\rm max}-\tau), \\
\int_{t}^{t+\tau}\int_{\Omega}   n^{2} \le C \quad\mbox{ for all }t\in(0,T_{\rm max}-\tau),
\end{gathered}
\]
where $\tau:=\min\{1,\frac{1}{2}T_{\rm max}\}$.
\end{cor}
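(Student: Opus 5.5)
Fix $\delta\in(0,\delta_*)$ with $\delta_*$ from Lemma~\ref{LEMLOGL}. Since $\overline{\Omega}$ is compact, we cover it by finitely many balls $B_{\delta/2}(q_i)$, $q_i\in\overline{\Omega}$, $i=1,\dots,N$, with $N=N(\delta,\Omega)$. Write $\varphi_i(x)=\varphi_\delta(x-q_i)$. By Lemma~\ref{LEMSTEST}, $\varphi_i\equiv1$ on $B_{\delta/2}(q_i)$, so $\sum_{i}\varphi_i^4\ge1$ on $\overline\Omega$. We have $n\ln n+e^{-1}\ge0$, and Lemma~\ref{LEMLOGL} gives constants independent of $q$. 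Summing the two estimates of Lemma~\ref{LEMLOGL} over $i$ then yields
\[
\sup_{t<T_{\rm max}}\int_\Omega (n\ln n+e^{-1})\le \sum_i\sup_t\int_\Omega (n\ln n+e^{-1})\varphi_i^4\le NC,
\]
and likewise $\int_t^{t+\tau}\int_\Omega |\nabla n|^2/n\le NC$. These are the first two claims.

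For the $n^2$ bound we use \eqref{LEMLOCINT1} for each $i$ and sum:
\[
\int_\Omega n^2\le\sum_i\int_\Omega n^2\varphi_i^4\le C_1\sum_i\Bigl(\int_\Omega\frac{|\nabla n|^2}{n}\varphi_i^4+\|\varphi_i^2\|^2_{W^{1,\infty}}\Bigr).
\]
Integrating in time over $[t,t+\tau]$, with $\tau\le1$, and invoking the second claim gives the fourth claim.

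The remaining claim is the $|\nabla c|^4/(1+c)^{4\gamma}$ bound, which is the main obstacle, since we must match the weight to one available from the localized estimates.

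\emph{Case 1} ($\gamma>\frac12$, $s_1>0$). Choose $k\in(\frac12,1)$ with $k\le\gamma$, e.g. $k=\min\{\gamma,\frac34\}$. By \eqref{CLOWER}, $(1+c)^{-4\gamma}\le C(s_1+c)^{-4k}$ since $(s_1+c)/(1+c)$ is bounded above and below by positive constants. Thus it suffices to bound $\int_t^{t+\tau}\int_\Omega |\nabla c|^4(s_1+c)^{-4k}\varphi_i^4$ for each $i$. Exactly as in \eqref{LEMLOCINT11}, shrinking $\delta$ via Proposition~\ref{KEYPRO} with $\beta=2-k$ so that the absorbable coefficient in \eqref{LEMLOCINT2} is $\le\frac12$, we get a bound by $C(\int_\Omega|\nabla n|^2 n^{-1}\varphi_i^4+C(\delta))+C(\int_\Omega|u|^{12/5})^5$. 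The time integral of the last term is controlled by \eqref{U125} and Lemma~\ref{UBDD}, and the first by the second claim. Summing over $i$ finishes this case.

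\emph{Case 2} (fluid-free, $\gamma>0$, $s_1\ge0$). Here \eqref{CLOWER} gives $c\ge c_*>0$, so $(1+c)^{-4\gamma}\le C$. It suffices to bound $\int|\nabla c|^4\varphi_i^4$, and \eqref{DEC26103} gives $\int_\Omega|\nabla c|^4\varphi_i^4\le C_8\int_\Omega n^2\varphi_i^4+C(\delta)$. Integrating in time, using the $n^2$ bound just proved, and summing over $i$ concludes. (Alternatively, in Case 1 one could also use a weight with $k$ close to $\frac12$; the key point is only that some $k\in(\frac12,\min\{1,\gamma\}]$ exists.)

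All constants depend only on $\delta$, which is fixed in terms of the data, and on the quantities listed in the statement, with $u_0,\Phi$ absent in the fluid-free case.
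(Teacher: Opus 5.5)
Your proposal is correct. The first two assertions follow exactly as in the paper: a finite cover by balls $B_{\delta/2}(q_i)$ on which $\varphi_i\equiv1$, combined with the $q$-uniform bounds of Lemma~\ref{LEMLOGL}. The differences are in the last two assertions.

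\textbf{The $n^2$ bound.} You sum \eqref{LEMLOCINT1} over the cover. The paper instead applies the global inequality $\|n\|_{L^2(\Omega)}^2\le C\|n\|_{L^1(\Omega)}\int_\Omega\frac{|\nabla n|^2}{n}+C\|n\|_{L^1(\Omega)}^2$. Both rest on $W^{1,1}(\Omega)\hookrightarrow L^2(\Omega)$, so the difference is cosmetic.

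\textbf{The weighted $|\nabla c|^4$ bound, Case 1.} The paper cites \eqref{LEMLOCINT2}, \eqref{NABLACL2}, \eqref{U125} and Lemma~\ref{UBDD} for both regimes. Your Case 1 follows that route, and you make explicit the comparison $(1+c)^{-4\gamma}\le C(s_1+c)^{-4k}$ for $k\le\gamma$, which the paper leaves implicit. The exponent $k$ in \eqref{SSPLIT} necessarily satisfies $k<\gamma$. So \eqref{LEMLOCINT11} already holds for every $\delta<\delta_*$. If you prefer your own $k$, fix $\delta$ below both thresholds before building the cover, rather than shrinking it afterwards.

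\textbf{The weighted $|\nabla c|^4$ bound, fluid-free case.} Here you use the unweighted local bound \eqref{DEC26103} together with the $n^2$ estimate you just proved. This is a genuine gain over the cited ingredients. \eqref{LEMLOCINT2} controls only weights $(s_1+c)^{-4k}$ with $k>\frac12$. These cannot dominate $(1+c)^{-4\gamma}$ when $\gamma\le\frac12$, because no $L^\infty$ bound on $c$ is available yet: Corollary~\ref{CBDDCOR} is derived later from the present corollary, via Lemma~\ref{RHOL2}. Your argument therefore covers the whole range $\gamma>0$ claimed for the fluid-free system, at the cost of proving the fourth assertion before the third.

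A minor point: in Case 2 you do not need the lower bound on $c$, since $(1+c)^{-4\gamma}\le 1$ simply because $c\ge0$.
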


\begin{proof}
Let $\delta_*>0$ be as in Lemma~\ref{LEMLOGL}. Fix any $\delta\in(0,\delta_*)$.
Since $\overline{\Omega}$ is compact, there is a finite subcovering of every open covering $ \bigcup_{q\in\overline{\Omega}}B_{\frac{\delta}{2}}(q)$. 
%Note that every open covering $ \bigcup_{q\in\overline{\Omega}}B_{\frac{\delta}{2}}(q)$, $\delta>0$, of compact set $\overline{\Omega}$ has a finite subcovering. 
Since $x\ln x+e^{-1}\ge0$ for all $x\ge0$ and $\varphi=1$ in $B_{\frac{\delta}{2}}(q)$, the first and second assertions are a direct consequence of Lemma~\ref{LEMLOGL}. 
 By a similar reason, 
we can obtain the third assertion from \eqref{LEMLOCINT2}, \eqref{NABLACL2}, \eqref{U125}, and Lemma~\ref{UBDD}.
The last assertion comes from   the  interpolation inequality,
\[
\|n\|_{L^{2}(\Omega)}^{2}\le C\|n\|_{L^{1}(\Omega)}\int_{\Omega}\frac{|\nabla n|^{2}}{n}+C \|n\|_{L^{1}(\Omega)}^{2},
\]
$\int_{\Omega}n=\int_{\Omega}n_{0}$, and the second assertion.
\end{proof}

Next, we derive a uniform-in-time $L^2$ estimate based on the space-time $L^2$-bound for $n$ from Corollary~\ref{COR1}.

%Now, we derive more regularity properties.

\begin{lemma}\label{RHOL2}
Assume that either $\gamma>\frac12$ and $s_1>0$, or $\gamma>0$, $s_1\ge0$ and the system is fluid-free.
There exists a constant $C=C (\Omega,s_0,s_1,\gamma,\textstyle\int_\Omega n_0, \|n_0\|_{L^2(\Omega)}, \|u_0\|_{L^2(\Omega)},\|\nabla\Phi\|_{L^\infty(\Omega)})>0$, with the dependence on $u_0,\Phi$ dropped in the fluid-free case, such that
\[
\sup_{t<T_{\rm max}} \| n(\cdot,t)\|_{L^{2}(\Omega)}   \le C.
  \]
\end{lemma}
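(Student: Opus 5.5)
We test \eqref{KS01}$_1$ with $n$ and derive an ODE inequality for $y(t):=\int_\Omega n^2$ whose forcing has bounded time averages, then conclude with Lemma~\ref{EXPTINT}. Since $\nabla\cdot u=0$ and $u=0$ on $\partial\Omega$, the convective term drops out, and the no-flux boundary condition gives
\[
\frac12\frac{d}{dt}\int_\Omega n^2+\int_\Omega|\nabla n|^2=\int_\Omega n\,\nabla n\cdot\bigl(S(x,n,c)\nabla c\bigr)
\le \frac12\int_\Omega|\nabla n|^2+\frac{s_0^2}{2}\int_\Omega n^2\frac{|\nabla c|^2}{(s_1+c)^{2\gamma}}.
\]
Thanks to \eqref{CLOWER}, $(s_1+c)^{-2\gamma}\le C(1+c)^{-2\gamma}$ in both the fluid-coupled and fluid-free cases, since $s_1+c$ is bounded below. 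The chemotactic term is therefore bounded by
\[
C\|n\|_{L^4(\Omega)}^2\Bigl\|\frac{\nabla c}{(1+c)^{\gamma}}\Bigr\|_{L^4(\Omega)}^2 .
\]

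Next, the Gagliardo--Nirenberg inequality in two dimensions, together with mass conservation, gives
\[
\|n\|_{L^4}^2\le C\|\nabla n\|_{L^2}^{3/2}\|n\|_{L^1}^{1/2}+C\|n\|_{L^1}^2\le C\|\nabla n\|_{L^2}^{3/2}+C .
\]
Young's inequality then yields
\[
C\|n\|_{L^4}^2 h(t)^{1/2}\le \frac14\|\nabla n\|_{L^2}^2+C h(t)^{2}+C h(t)^{1/2},\qquad h(t):=\int_\Omega\frac{|\nabla c|^4}{(1+c)^{4\gamma}} .
\]
This exponent is bad: Corollary~\ref{COR1} only controls $\int_t^{t+\tau}h$, not $\int_t^{t+\tau}h^2$. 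So I would instead interpolate with $y$ itself,
\[
\|n\|_{L^4}^2\le C\|\nabla n\|_{L^2}\|n\|_{L^2}+C\|n\|_{L^1}^2 .
\]
Young's inequality then gives
\[
\frac12\frac{d}{dt}y+\frac14\int_\Omega|\nabla n|^2\le C\,h(t)\,y+C\,h(t)^{1/2}.
\]
To add a damping term, use $\int_\Omega n^2\le C\|\nabla n\|_{L^2}\|n\|_{L^1}+C\|n\|_{L^1}^2$ to get $\int_\Omega|\nabla n|^2\ge c_0y-C$. This leads to
\[
y'+c_0 y\le C h(t)\,y+C(1+h(t)).
\]

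The key point, and the main obstacle, is that this inequality is linear in $y$ with a coefficient $h$ that is only locally integrable in time, uniformly: $\int_t^{t+\tau}h\le C$ by Corollary~\ref{COR1}. I would handle it by a Gronwall argument on windows of length $\tau$, rather than through Lemma~\ref{EXPTINT} directly. For $t\ge\tau$, the bound $\int_{t-\tau}^{t}\int_\Omega n^2\le C$ from Corollary~\ref{COR1} lets us choose $t_0\in(t-\tau,t)$ with $y(t_0)\le C/\tau$. Integrating the inequality from $t_0$ to $t$ with the integrating factor $\exp\bigl(-\int_{t_0}^{t}Ch\bigr)$, which is bounded below, gives
\[
y(t)\le e^{C\int_{t_0}^{t}h}\Bigl(y(t_0)+C\int_{t_0}^{t}(1+h)\Bigr)\le C .
\]
For $t<\tau$, the same argument starting from $t_0=0$ gives a bound in terms of $\|n_0\|_{L^2(\Omega)}$.

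One point needs care when $T_{\rm max}$ is small, because $\tau=\min\{1,T_{\rm max}/2\}$ may depend on $T_{\rm max}$. However, the constants in Corollary~\ref{COR1} do not depend on $T_{\rm max}$, so the bound remains uniform. This proves the lemma, with the stated dependence of $C$ on the data and with the $u_0,\Phi$ dependence dropped in the fluid-free case as in Corollary~\ref{COR1}.
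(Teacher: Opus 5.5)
Your proof is correct and follows essentially the same route as the paper: you test with $n$, use Gagliardo--Nirenberg in the form $\|n\|_{L^4}^2\lesssim\|n\|_{L^2}\|\nabla n\|_{L^2}+\|n\|_{L^1}^2$ to reach $y'\le C h y+C(1+h)$, and then run a windowed Gronwall argument using the space-time bounds on $\int_\Omega n^2$ and $h$ from Corollary~\ref{COR1}; the extra damping term is harmless but unnecessary. One small fix is needed: your starting bound $y(t_0)\le C/\tau$ is not uniform when $T_{\rm max}$ is small, so, as the paper does, when $T_{\rm max}\le 2$ start from $t_0=0$ for every $t<T_{\rm max}$, using $\int_0^t h\le\int_0^\tau h+\int_{t-\tau}^t h\le 2C$.
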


\begin{proof}
Testing $\eqref{KS01}_{1}$ by $n^{p-1}$, $p>1$ yields 
\begin{equation}\label{LPrho}
\frac{1}{p}\frac{d}{dt}\int_{\Omega}n^{p}+(p-1)\int_{\Omega}n^{p-2}|\nabla n|^{2}= (p-1)\int_{\Omega}n^{p-1}\nabla n\cdot S(x,n,c)\cdot \nabla c.
\end{equation}
Taking $p=2$ in \eqref{LPrho} and using \eqref{KS02} and H\"older's inequality, it follows 
\begin{equation}
\frac{1}{2}\frac{d}{dt}\int_{\Omega}n^{2}+\int_{\Omega}|\nabla n|^{2}
\le s_{0}\|n\|_{L^{4}(\Omega)}\|\nabla n\|_{L^{2}(\Omega)}\norm{\frac{\nabla c}{(1+c)^{\gamma}}}_{L^{4}(\Omega)}.
\end{equation}
Using the Gagliardo-Nirenberg inequality in two dimensions,
\[
\|n\|_{L^{4}(\Omega)}\le C \left(\|  n\|_{L^{2}(\Omega)}^{\frac{1}{2}} \|  \nabla n\|_{L^{2}(\Omega)}^{\frac{1}{2}}+\|  n\|_{L^{1}(\Omega)} \right),
\]
and the mass conservation property of $n$, it follows that with some constant $C>0$,
\[
\frac{1}{2}\frac{d}{dt}\int_{\Omega}n^{2}+\int_{\Omega}|\nabla n|^{2}
\le C \left(\|  n\|_{L^{2}(\Omega)}^{\frac{1}{2}} \|  \nabla n\|_{L^{2}(\Omega)}^{\frac{1}{2}}+1 \right)\|\nabla n\|_{L^{2}(\Omega)}\norm{\frac{\nabla c}{(1+c)^{\gamma}}}_{L^{4}(\Omega)}.
\]
Applying Young's inequality to the RHS, we can absorb the $\|  \nabla n\|_{L^{2}(\Omega)}$-terms and obtain an ODE of 
\[
 y'(t)\le Cg(t)y(t)+C, \quad \mbox{ for all } t\in (0, T_{\rm max})
\] 
where we set
\[
y(t):= \|n(\cdot,t)\|_{L^{2}(\Omega)}^{2}+1,\qquad g(t):=\norm{\frac{\nabla c(\cdot,t)}{(1+c(\cdot, t))^{\gamma}}}_{L^{4}(\Omega)}^{4}.
\]

Note from Corollary~\ref{COR1} that there exist positive constants $C_{10}$ and $C_{11}$ satisfying
\[
\int_{t}^{t+\tau}y(s)ds\le C_{10},\qquad \int_{t}^{t+\tau}g(s)ds\le C_{11},\quad\mbox{ for all }t\in(0,T_{\rm max}-\tau),
\]
 where $\tau:=\min\{1,\frac{1}{2}T_{\rm max}\}$. For the case $T_{\rm max}\le2$, we 
apply an ODE-comparison argument to show  
\[
y(t)\le y(0)e^{\int_{0}^{t}Cg(s)ds}+C\int_{0}^{t}e^{\int_{\sigma}^{t}Cg(s)ds}d\sigma, \quad\mbox{ for all }t< T_{\rm max}.
\] 
Then, along with the fact that
\[
\int_{0}^{t}g(s)ds\le \int_{0}^{\tau}g(s)ds+\int_{t-\tau}^{t}g(s)ds\le 2C_{11}, \quad\mbox{ for all }t< T_{\rm max},
\]
we can see that
\begin{equation}\label{MAR21}
y(t)\le \left( y(0)+C \right)e^{CC_{11}}, \quad\mbox{ for all }t<T_{\rm max}.
\end{equation}
Otherwise, if $T_{\rm max}>2$, note that $\tau=1$.
Note also that for any unit interval $[t,t+1]$, $t<T_{\rm max}-1$, we can find $t_{0}\in[t,t+1]$ satisfying
\[
y(t_0)\le C_{10}.
\]
Thus, again by an ODE-comparison argument, we have 
\[
y(t)\le (C_{10}+C)e^{CC_{11}},\quad\mbox{ for all }t<T_{\rm max}.
\]
Combining it with \eqref{MAR21}, we can deduce the desired result.
\end{proof}

Next, we improve the regularity of the fluid velocity.
%More precisely, we derive a uniform-in-time bound for $\|A^\alpha u(\cdot,t)\|_{L^2(\Omega)}$.

\begin{lemma}\label{LEM13}
Assume that $\gamma>\frac12$ and $s_1>0$. Let $\alpha\in(\frac12,1)$ be as in \eqref{KS03}. Then there exists $C=C (\Omega,\alpha,s_0,s_1,\gamma,\int_\Omega n_0,\|n_0\|_{L^2(\Omega)}, \|u_0\|_{L^2(\Omega)},
\|A^\alpha u_0\|_{L^2(\Omega)},\|\nabla\Phi\|_{L^\infty(\Omega)} )>0$ such that
\[
\sup_{t<T_{\rm max}}   \| A^{\alpha} u(\cdot,t)\|_{L^{2}(\Omega)}  \le C.
\]
%where $\alpha\in(\frac{1}{2},1)$ is the number given in \eqref{KS03}.
\end{lemma}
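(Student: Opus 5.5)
We will use the variation-of-constants representation of the fluid equation via the Helmholtz projection $\mathcal{P}$:
\[
u(\cdot,t)=e^{-tA}u_0+\int_0^t e^{-(t-s)A}\mathcal{P}\bigl(n\nabla\Phi-(u\cdot\nabla)u\bigr)(\cdot,s)\,ds ,
\]
and apply $A^{\alpha}$ to both sides. We rely on two standard facts for the Stokes semigroup in bounded domains. The first is the exponential decay $\|A^{\alpha}e^{-tA}\mathcal{P}f\|_{L^2}\le C t^{-\alpha}e^{-\mu t}\|f\|_{L^2}$ for some $\mu>0$. The second is $\|A^\alpha e^{-tA}u_0\|_{L^2}\le \|A^\alpha u_0\|_{L^2}$.

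\emph{Forcing term.} Lemma~\ref{RHOL2} gives $\sup_t\|n\nabla\Phi\|_{L^2}\le C\|\nabla\Phi\|_{L^\infty}$. Since $\alpha<1$, the integral $\int_0^\infty \sigma^{-\alpha}e^{-\mu\sigma}d\sigma$ is finite. Hence this contribution is bounded uniformly in $t$.

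\emph{Convective term.} This is the main obstacle. We first show that $\sup_t\|\nabla u\|_{L^2}$ is bounded, by a standard energy step. Test $\eqref{KS01}_3$ by $Au$. In two dimensions, Ladyzhenskaya's inequality and $\|D^2u\|_{L^2}\le C\|Au\|_{L^2}$ give
\[
\Bigl|\int_\Omega (u\cdot\nabla)u\cdot Au\Bigr|\le C\|u\|_{L^2}^{1/2}\|\nabla u\|_{L^2}\|Au\|_{L^2}^{3/2}\le \tfrac14\|Au\|_{L^2}^2+C\|u\|_{L^2}^2\|\nabla u\|_{L^2}^4 .
\]
For the forcing, $\int_\Omega n\nabla\Phi\cdot Au\le \frac14\|Au\|_{L^2}^2+C\|n\|_{L^2}^2$. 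Together these yield
\[
y'(t)\le C\|\nabla u\|_{L^2}^2\,y(t)+C, \qquad y:=\|\nabla u\|_{L^2}^2 ,
\]
using Lemma~\ref{UBDD} and Lemma~\ref{RHOL2}. By \eqref{UBDD02}, $\int_t^{t+\tau}y$ is uniformly bounded. The ODE-comparison argument of Lemma~\ref{RHOL2}, choosing a good starting point $t_0$ in each unit interval, then gives $\sup_t\|\nabla u\|_{L^2}\le C$. For small times we use $\|\nabla u_0\|_{L^2}\le C\|A^\alpha u_0\|_{L^2}$, which holds since $\alpha>\frac12$.

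With $\nabla u$ bounded in $L^2$, we have $u\in L^\infty_tL^q_x$ for every $q<\infty$ by Sobolev embedding. By H\"older, $\|(u\cdot\nabla)u\|_{L^{p}}\le\|u\|_{L^{q}}\|\nabla u\|_{L^2}$ for any $p<2$. We use the smoothing estimate $\|A^\alpha e^{-tA}\mathcal{P}f\|_{L^2}\le Ct^{-\alpha-(\frac1p-\frac12)}e^{-\mu t}\|f\|_{L^p}$. Choosing $p$ close to $2$ keeps the exponent $\alpha+\frac1p-\frac12$ below $1$, so this part of the Duhamel integral is also uniformly bounded. Combining the three contributions yields the stated bound on $\sup_{t<T_{\rm max}}\|A^\alpha u(\cdot,t)\|_{L^2(\Omega)}$.

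As a fallback, if the $L^p\to D(A^\alpha)$ smoothing estimate is inconvenient, we would instead estimate $\|(u\cdot\nabla)u\|_{L^2}\le \|u\|_{L^4}\|\nabla u\|_{L^4}$. We would then bound $\|\nabla u\|_{L^4}\le C\|A^{\alpha}u\|_{L^2}^{\theta}\|\nabla u\|_{L^2}^{1-\theta}$ with $\theta<1$, and close with a Gronwall-type argument on $\sup_{s\le t}\|A^\alpha u(s)\|_{L^2}$ after a Young absorption.
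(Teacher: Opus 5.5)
Your proof is correct. Its first half coincides with the paper's argument: you test by $Au$ to get $y'\le C\|\nabla u\|_{L^2}^2y+C$ and then run the good-starting-point ODE comparison, using Ladyzhenskaya where the paper uses the Agmon-type bound $\|u\|_{L^\infty}^2\le C\|u\|_{L^2}\|Au\|_{L^2}$.

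The second half takes a different route. The paper stays entirely in the $L^2$ framework. It bounds $\|(u\cdot\nabla)u\|_{L^2}\le\|u\|_{L^\infty}\|\nabla u\|_{L^2}$ and interpolates $\|u\|_{L^\infty}\le C\|u\|_{L^2}^{1-\theta}\|A^\alpha u\|_{L^2}^{\theta}$ with $\theta<1$. It then closes by absorbing $\bigl(\sup\|A^\alpha u\|_{L^2}\bigr)^\theta$ via Young's inequality. Strictly speaking, that absorption should be done on $(0,T)$ with $T<T_{\max}$, where the supremum is known to be finite.

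You instead use the $L^p\to D(A^\alpha)$ smoothing estimate with $p<2$. Your exponent count $\alpha+\frac1p-\frac12<1$ is right, so the convective term is controlled by the already-bounded quantity $\|u\|_{L^q}\|\nabla u\|_{L^2}$. This gives the bound on $A^\alpha u$ directly, with no self-referential absorption. The price is heavier machinery: $L^p$-boundedness of $\mathcal P$ and $L^p$--$L^2$ estimates for the Stokes semigroup. The paper needs only $L^2$ semigroup theory plus $D(A^\alpha)\hookrightarrow L^\infty$.

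Your fallback is not valid as stated for all $\alpha\in(\frac12,1)$. The interpolation $\|\nabla u\|_{L^4}\le C\|A^\alpha u\|_{L^2}^\theta\|\nabla u\|_{L^2}^{1-\theta}$ with $\theta<1$ requires $D(A^\alpha)\subset H^{2\alpha}$ to reach $W^{1,4}(\Omega)$ with room to spare, i.e.\ $\alpha>\frac34$. It therefore fails for $\alpha\in(\frac12,\frac34]$. The paper avoids this by putting the interpolation on $\|u\|_{L^\infty}$ and keeping $\|\nabla u\|_{L^2}$ as the other factor, which works for every $\alpha>\frac12$.
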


\begin{proof}
First, we show
\begin{equation}\label{MAR210}
\sup_{t<T_{\rm max}}  \| \nabla u(\cdot,t)\|_{L^{2}(\Omega)}  \le C.
\end{equation}
To this end, we apply Helmholtz projection $\mathcal{P}$ to $\eqref{KS01}_{3}$ and
test $Au$. Then,
\[\begin{aligned}
\frac{1}{2}\frac{d}{dt}\int_{\Omega}|\nabla u|^{2}+\int_{\Omega}|Au|^{2}
&\le \int_{\Omega}|\mathcal{P}((u\cdot\nabla) u)||Au|+ \int_{\Omega}|\mathcal{P}(n\nabla \Phi)||Au|
\\&\le \int_{\Omega}|(u\cdot\nabla) u|^{2}+ \int_{\Omega}|n \nabla \Phi|^{2} + \frac{1}{2}\|Au\|^{2}_{L^{2}(\Omega)},
\end{aligned}\]
by the Young's inequality and the projection property $\|\mathcal{P}f\|_{L^{2}(\Omega)}\le \|f\|_{L^{2}(\Omega)}$ for all $f\in L^{2}(\Omega)$.
By the Gagliardo-Nirenberg inequality and elliptic regularity of Stokes operator $\| u\|_{W^{2,2}(\Omega)}\lesssim \|Au \|_{L^{2}(\Omega)}$, there is a constant $C>0$ such that 
\[
\|u\|^{2}_{L^{\infty}(\Omega)} 
%\leq C \|u\|_{L^2(\Omega)}\|u\|_{W^{2,2}(\Omega)} 
\leq C \|Au \|_{L^{2}(\Omega)}\|u\|_{L^2(\Omega)}.
\]
Therefore, we have 
\[
\begin{aligned}
  \int_{\Omega}|(u\cdot\nabla) u|^{2} 
 &\le \|u\|_{L^{\infty}(\Omega)}^{2}\|\nabla u\|_{L^{2}(\Omega)}^{2}  
 \\&\le C\|Au\|_{L^{2}(\Omega)}\|  u\|_{L^{2}(\Omega)}\|\nabla u\|_{L^{2}(\Omega)}^{2}  
 \\ &\le \frac{1}{4}\|Au\|^{2}_{L^{2}(\Omega)} + C \|u\|^{2}_{L^{2}(\Omega)}\|\nabla u\|_{L^{2}(\Omega)}^{4} 
\end{aligned}
\]
and 
\[
 \int_{\Omega}|n \nabla \Phi|^{2} \le  \|\nabla \Phi\|_{L^{\infty}(\Omega)}^{2}\|n\|_{L^{2}(\Omega)}^{2}.
\]
    Combining the above, there is a constant $C>0$ such that 
\[
2\frac{d}{dt}\int_{\Omega}|\nabla u|^{2}+\int_{\Omega}|Au|^{2} \leq 
C \|u\|^{2}_{L^{2}(\Omega)}\|\nabla u\|_{L^{2}(\Omega)}^{4} + 4\|\nabla \Phi\|_{L^{\infty}(\Omega)}^{2}\|n\|_{L^{2}(\Omega)}^{2}.
\]

Using  $u,n\in L^{\infty}(0,T_{\rm max};L^{2}(\Omega))$  
from Lemma~\ref{UBDD} and Lemma~\ref{RHOL2},  and Young's inequality, there exists a constant $C>0$ such that 
\[
h'(t) \le C(\|\nabla u (\cdot, t) \|_{L^{2}(\Omega)}^{2}h(t) +1), \qquad h(t):=\|\nabla u(\cdot,t)\|_{L^{2}(\Omega)}^{2}.
\]
Moreover, by Lemma~\ref{UBDD}, there exists a constant $C_{12}>0$ satisfying
\[
\int_{t}^{t+\tau}\|\nabla u(\cdot,s)\|_{L^{2}(\Omega)}^{2}ds\le C_{12}\quad\mbox{ for all }t\in(0,T_{\rm max}-\tau),
\]
 where $\tau:=\min\{1,\frac{1}{2}T_{\rm max}\}$.  

In the case $T_{\rm max}\le2$,
applying an ODE-comparison argument gives, for all $t< T_{\rm max}$,
\[
h(t)\le h(0)e^{\int_{0}^{t}C\|\nabla u(\cdot,s)\|_{L^{2}(\Omega)}^{2}ds}+C\int_{0}^{t}e^{\int_{\sigma}^{t}C\|\nabla u(\cdot,s)\|_{L^{2}(\Omega)}^{2}ds} d\sigma.
\]
Since
\[
\int_{0}^{t}\|\nabla u(\cdot,s)\|_{L^{2}(\Omega)}^{2}ds\le \int_{0}^{\tau}\|\nabla u(\cdot,s)\|_{L^{2}(\Omega)}^{2}ds+\int_{t-\tau}^{t}\|\nabla u(\cdot,s)\|_{L^{2}(\Omega)}^{2}ds\le 2C_{12}
\]
for all $t<T_{\rm max}$, it follows that
\begin{equation}\label{MAR211}
h(t)\le (h(0)+CC_{7})e^{CC_{12}}\quad\mbox{ for all }t< T_{\rm max}.
\end{equation}
In the case when $T_{\rm max}>2$, note that $\tau=1$. Note also that
for any unit interval $[t,t+1]$,  $t<T_{\rm max}-1$, we can find $t_0\in[t,t+1]$ such that
\[
h(t_0)\le C_{12}.
\]
Thus, again by an ODE-comparison argument, we obtain that 
\[
h(t)\le (C_{12}+C)e^{CC_{12}}\quad\mbox{ for all }t <T_{\rm max}.
\]
Combining it with \eqref{MAR211}, we have \eqref{MAR210}.

Next, we use the smoothing estimates for the Stokes semigroup  and H\"older's inequality
to obtain $\mu>0$ and $C>0$ such that
\[
\begin{aligned}
\|A^{\alpha}u(\cdot,t)\|_{L^{2}(\Omega)}
&\le \|A^{\alpha}u_{0}\|_{L^{2}(\Omega)}+C\int_{0}^{t}(t-s)^{\alpha}e^{-\mu t} \|  u(\cdot,s)\|_{L^{\infty}(\Omega)} \|\nabla u(\cdot,s)\|_{L^{2}(\Omega)} ds
\\&\quad+C\int_{0}^{t}(t-s)^{\alpha}e^{-\mu t}\|\nabla \Phi\|_{L^{\infty}(\Omega)}\|n(\cdot,s)\|_{L^{2}(\Omega)}ds.
\end{aligned}
\]
Choose $p\in(2, \frac{1}{1-\alpha}]$ for $\alpha \in (\frac{1}{2}, 1)$. 
By the interpolation inequality and the embedding theorem $D(A^{\alpha})\hookrightarrow W^{1,p}(\Omega)$, there exists a constant $C>0$ satisfying
\begin{equation}\label{UEMB}
\| u\|_{L^{\infty}(\Omega)}\le C\| u\|_{L^{2}(\Omega)}^{1-\theta} \|A^{\alpha}  u\|_{L^{2}(\Omega)}^{\theta},\qquad \theta=\frac{p}{2(p-1)}\in(0,1).
\end{equation}
It then follows by Lemma~\ref{UBDD},  Lemma~\ref{RHOL2} and \eqref{MAR210} that with some $C>0$
\[
\sup_{t<T_{\rm max}}\|A^{\alpha}u(\cdot,t)\|_{L^{2}(\Omega)}\le C+C \left(\sup_{t<T_{\rm max}}\|A^{\alpha}u(\cdot,t)\|_{L^{2}(\Omega)} \right)^{\theta}.
\]
Therefore, due to Young's inequality, we have the desired result.
\end{proof}

Combining the uniform $L^2$-bound for $u$ in Lemma~\ref{UBDD} with the uniform $D(A^\alpha)$-bound in Lemma~\ref{LEM13}
and the interpolation estimate \eqref{UEMB}, we obtain a uniform-in-time $L^\infty$ bound for $u$.

%In view of Lemma~\ref{UBDD}, Lemma~\ref{LEM13} and \eqref{UEMB}, $u$ is bounded uniformly in time. 

\begin{cor}\label{UINFCOR}
Let $\alpha\in(\frac12,1)$ be as in \eqref{KS03}.
Then there exists a constant \\
$C=C (\Omega,\alpha, s_0, s_1, \gamma, \int_\Omega n_0,\|n_0\|_{L^2(\Omega)}, \|u_0\|_{L^2(\Omega)},\|A^\alpha u_0\|_{L^2(\Omega)},\|\nabla\Phi\|_{L^\infty(\Omega)})>0$ such that
\[
\sup_{t<T_{\rm max}}   \|  u(\cdot,t)\|_{L^{\infty}(\Omega)}  \le C.
\]
\end{cor}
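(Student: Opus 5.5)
The plan is to combine the interpolation inequality \eqref{UEMB} with the two uniform bounds already established. Fix $\alpha\in(\frac12,1)$ as in \eqref{KS03}, choose $p\in(2,\frac{1}{1-\alpha}]$, and use the embedding $D(A^\alpha)\hookrightarrow W^{1,p}(\Omega)\hookrightarrow L^\infty(\Omega)$, which holds in two dimensions since $p>2$. Together with Gagliardo--Nirenberg interpolation between $L^2(\Omega)$ and $W^{1,p}(\Omega)$, this gives \eqref{UEMB} for every $t<T_{\rm max}$:
\[
\|u(\cdot,t)\|_{L^\infty(\Omega)}\le C\|u(\cdot,t)\|_{L^2(\Omega)}^{1-\theta}\|A^\alpha u(\cdot,t)\|_{L^2(\Omega)}^{\theta},\qquad \theta=\tfrac{p}{2(p-1)}\in(0,1).
\]
Here the constant $C$ depends only on $\Omega$, $\alpha$ and $p$. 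The equivalence $\|u\|_{W^{1,p}(\Omega)}\lesssim\|A^\alpha u\|_{L^2(\Omega)}$ on $D(A^\alpha)$ is the standard fractional-power embedding for the Stokes operator; it uses the Poincar\'e-type bound $\|u\|_{L^2}\lesssim\|A^\alpha u\|_{L^2}$, which is valid because $A$ is positive under Dirichlet conditions.

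Next, I would insert \eqref{UBDD01} from Lemma~\ref{UBDD}, which bounds $\sup_{t<T_{\rm max}}\|u(\cdot,t)\|_{L^2(\Omega)}$ in terms of $\Omega,\gamma,s_0,s_1,\int_\Omega n_0,\|u_0\|_{L^2},\|\nabla\Phi\|_{L^\infty}$. I would also insert Lemma~\ref{LEM13}, which bounds $\sup_{t<T_{\rm max}}\|A^\alpha u(\cdot,t)\|_{L^2(\Omega)}$ with the additional dependence on $\alpha$, $\|n_0\|_{L^2}$ and $\|A^\alpha u_0\|_{L^2}$. Taking the supremum over $t<T_{\rm max}$ then yields the claim, with exactly the dependence listed in the statement. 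As in Lemma~\ref{LEM13}, I would carry over the standing fluid-coupled assumptions $\gamma>\frac12$ and $s_1>0$.

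There is essentially no obstacle. The only point needing care is to justify \eqref{UEMB} at each time, i.e.\ that $u(\cdot,t)\in D(A^\alpha)$ for all $t<T_{\rm max}$. This follows from the regularity $u\in C([0,T_{\rm max});D(A^\alpha))$ given in Lemma~\ref{LWLEM}.
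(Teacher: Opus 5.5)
Your proposal is correct and is exactly the paper's argument: the interpolation estimate \eqref{UEMB}, combined with the uniform $L^2$-bound \eqref{UBDD01} from Lemma~\ref{UBDD} and the uniform $D(A^\alpha)$-bound from Lemma~\ref{LEM13}, gives the claim with the stated dependence of $C$. Your extra checks, namely that the range $p\in(2,\frac{1}{1-\alpha}]$ is admissible and that $u(\cdot,t)\in D(A^\alpha)$ holds by Lemma~\ref{LWLEM}, are accurate details the paper leaves implicit.
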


With the uniform bound for $u$ in $L^\infty(\Omega)$ in Corollary~\ref{UINFCOR} and the uniform $L^2$-bound for $n$ in Lemma~\ref{RHOL2},
we next derive a uniform $W^{2,2}$-estimate for $c$ from the elliptic equation \eqref{KS01}$_2$.

\begin{lemma}\label{CBDD}
There exists $C=C (\Omega,\alpha, s_0, s_1, \gamma, \int_\Omega n_0,\|n_0\|_{L^2(\Omega)}, \|u_0\|_{L^2(\Omega)},\|A^\alpha u_0\|_{L^2(\Omega)},\|\nabla\Phi\|_{L^\infty(\Omega)})>0$, with the dependence on $\alpha, u_0,\Phi$ dropped in the fluid-free case, such that
\[
\sup_{t<T_{\rm max}}\|c\|_{W^{2,2}(\Omega)}\le C.
\]
\end{lemma}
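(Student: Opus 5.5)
We treat the $c$-equation as an elliptic Neumann problem at each fixed time and apply standard $W^{2,2}$ regularity. We write $\eqref{KS01}_2$ as
\[
-\Delta c + c = n - u\cdot\nabla c \quad\text{in }\Omega,\qquad \partial_\nu c=0 \text{ on }\partial\Omega .
\]
Elliptic $L^2$-theory for the Neumann operator $-\Delta+1$ gives, with $C=C(\Omega)$,
\[
\|c\|_{W^{2,2}(\Omega)}\le C\bigl(\|n\|_{L^2(\Omega)}+\|u\cdot\nabla c\|_{L^2(\Omega)}\bigr).
\]
In the fluid-free case the second term vanishes, and Lemma~\ref{RHOL2} finishes the proof directly. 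The dependence on $\alpha,u_0,\Phi$ then disappears, as the statement claims.

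In the fluid-coupled case we estimate the transport term by Corollary~\ref{UINFCOR}:
\[
\|u\cdot\nabla c\|_{L^2(\Omega)}\le \|u\|_{L^\infty(\Omega)}\|\nabla c\|_{L^2(\Omega)}\le C\|\nabla c\|_{L^2(\Omega)} .
\]
The main obstacle is that $\|\nabla c\|_{L^2}$ is not yet controlled uniformly. The identity \eqref{NABLACL2} only gives weighted bounds with $r>0$, not the unweighted case $r=0$.

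First I would test the equation by $c$. Since $\int_\Omega (u\cdot\nabla c)c=\frac12\int_\Omega u\cdot\nabla c^2=0$ by $\nabla\cdot u=0$ and $u|_{\partial\Omega}=0$, this gives
\[
\int_\Omega|\nabla c|^2+\int_\Omega c^2=\int_\Omega nc\le \|n\|_{L^2(\Omega)}\|c\|_{L^2(\Omega)} .
\]
Young's inequality then yields $\|c\|_{W^{1,2}(\Omega)}\le C\|n\|_{L^2(\Omega)}$, which Lemma~\ref{RHOL2} bounds uniformly in $t<T_{\rm max}$.

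Inserting these bounds into the $W^{2,2}$ estimate, together with Lemma~\ref{RHOL2} and Corollary~\ref{UINFCOR}, gives $\sup_{t<T_{\rm max}}\|c\|_{W^{2,2}(\Omega)}\le C$. The constant depends on exactly the quantities listed, inherited from Lemma~\ref{RHOL2} and Corollary~\ref{UINFCOR}.

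If one wants to avoid the energy step, an alternative is a Gagliardo--Nirenberg interpolation $\|\nabla c\|_{L^2}\le \eta\|c\|_{W^{2,2}}+C_\eta\|c\|_{L^1}$ combined with $\int_\Omega c=\int_\Omega n_0$, absorbing $\eta\|c\|_{W^{2,2}}$ into the left-hand side. The energy identity above is simpler, so I would use it.
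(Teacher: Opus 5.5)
Your argument is correct. It shares the paper's skeleton: the Neumann $W^{2,2}$ estimate, with the transport term bounded by $\|u\|_{L^\infty(\Omega)}\|\nabla c\|_{L^2(\Omega)}$ via Corollary~\ref{UINFCOR}. The difference is how you close the gradient term.

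The paper does exactly what you list as your fallback. It uses the Gagliardo--Nirenberg inequality $\|\nabla c\|_{L^2(\Omega)}\le C\|c\|_{L^1(\Omega)}^{1/3}\|c\|_{W^{2,2}(\Omega)}^{2/3}$, the identity $\int_\Omega c=\int_\Omega n_0$, and Young's inequality to absorb into the left-hand side. This gives $\|c\|_{W^{2,2}(\Omega)}\le C\bigl(\|n\|_{L^2(\Omega)}+\|u\|_{L^\infty(\Omega)}^{3}\int_\Omega n_0\bigr)$.

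Your energy step avoids interpolation and absorption. It gives the cleaner bound $\|c\|_{W^{2,2}(\Omega)}\le C\bigl(1+\|u\|_{L^\infty(\Omega)}\bigr)\|n\|_{L^2(\Omega)}$, which is linear in $u$. The paper's route, by contrast, needs only $L^1$ information on $c$ for the gradient part. Both rely on $\nabla\cdot u=0$ and $u|_{\partial\Omega}=0$; the paper uses them through the mass identity for $c$.

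One small correction to your commentary. The identity $\int_\Omega|\nabla c|^2+\int_\Omega c^2=\int_\Omega nc$ is precisely \eqref{NABLACL2} with $(\sigma_1,r)=(0,-1)$, and the paper states \eqref{NABLACL2} for all $r\in\mathbb{R}$. What was missing earlier was not the identity but the $L^2$ control of $n$ needed to use a negative $r$. Lemma~\ref{RHOL2} now supplies that control.
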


\begin{proof}
Applying the standard elliptic regularity theory to $\eqref{KS01}_{2}$, there exists a constant $C>0$ such that
\[
\|c\|_{W^{2,2}(\Omega)}\le C( \|u\cdot \nabla c\|_{L^{2}(\Omega)}+ \|n\|_{L^{2}(\Omega)}).
\]
By H\"older's inequality and the Gagliardo-Nirenberg inequality, there exists a constant $C>0$ satisfying
\begin{align*}
\begin{aligned}
\|u\cdot \nabla c\|_{L^{2}(\Omega)}&\le \|u \|_{L^{\infty}(\Omega)} \|  \nabla c\|_{L^{2}(\Omega)}
\\& \le C\|u \|_{L^{\infty}(\Omega)} \|    c\|_{L^{1}(\Omega)}^{\frac{1}{3}} \|    c\|_{W^{2,2}(\Omega)}^{\frac{2}{3}}.
\end{aligned}
\end{align*}
Thus, using Corollary~\ref{UINFCOR}, $ \int_{\Omega} c = \int_{\Omega} n_0$, 
%$\|c\|_{L^{1}(\Omega)}=\|n_{0}\|_{L^{1}(\Omega)}$, 
Lemma~\ref{RHOL2}, and Young's inequality, we have the desired bound.
\end{proof}

As an immediate consequence of Lemma~\ref{CBDD}, we obtain a uniform-in-time $L^\infty$ bound for $c$.

\begin{cor}\label{CBDDCOR}
There exists $C=C (\Omega,\alpha, s_0, s_1, \gamma, \int_\Omega n_0,\|n_0\|_{L^2(\Omega)}, \|u_0\|_{L^2(\Omega)},\|A^\alpha u_0\|_{L^2(\Omega)},\|\nabla\Phi\|_{L^\infty(\Omega)})>0$, with the dependence on $\alpha, u_0,\Phi$ dropped in the fluid-free case, such that
\[
\sup_{t<T_{\rm max}}   \|  c(\cdot,t)\|_{L^{\infty}(\Omega)}  \le C.
\]
\end{cor}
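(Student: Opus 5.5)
This is the two-dimensional Sobolev embedding $W^{2,2}(\Omega)\hookrightarrow L^\infty(\Omega)$ applied to the uniform bound of Lemma~\ref{CBDD}. Since $\Omega\subset\R^2$ is a bounded smooth domain, Morrey's inequality (equivalently $W^{2,2}(\Omega)\hookrightarrow C^{0,\theta}(\overline{\Omega})$ for every $\theta\in(0,1)$) gives a constant $C_{\rm emb}=C_{\rm emb}(\Omega)>0$ such that
\[
\|f\|_{L^\infty(\Omega)}\le C_{\rm emb}\|f\|_{W^{2,2}(\Omega)}\qquad\mbox{for all } f\in W^{2,2}(\Omega).
\]

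For each $t<T_{\rm max}$, $c(\cdot,t)\in C^{2}(\overline{\Omega})\subset W^{2,2}(\Omega)$ by Lemma~\ref{LWLEM}. Applying the embedding with $f=c(\cdot,t)$ and taking the supremum in $t$ gives
\[
\sup_{t<T_{\rm max}}\|c(\cdot,t)\|_{L^\infty(\Omega)}\le C_{\rm emb}\sup_{t<T_{\rm max}}\|c(\cdot,t)\|_{W^{2,2}(\Omega)}.
\]
The right-hand side is bounded by Lemma~\ref{CBDD}. The constant therefore depends only on $\Omega$ and the quantities listed in Lemma~\ref{CBDD}. In the fluid-free case the dependence on $\alpha$, $u_0$ and $\Phi$ drops out, exactly as it does there.

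There is no real obstacle at this step. The only point to check is that the embedding constant is independent of $t$, which holds because it depends only on $\Omega$. The substantive work is already contained in Lemma~\ref{RHOL2}, Corollary~\ref{UINFCOR} and Lemma~\ref{CBDD}.
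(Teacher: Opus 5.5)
Your proposal is correct and matches the paper's proof: the paper also obtains the corollary directly from the uniform $W^{2,2}(\Omega)$ bound of Lemma~\ref{CBDD} and the two-dimensional embedding $W^{2,2}(\Omega)\hookrightarrow L^{\infty}(\Omega)$. As you note, the embedding constant depends only on $\Omega$, so the bound is uniform in $t$ and inherits the dependencies of Lemma~\ref{CBDD}.
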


\begin{proof}
A direct consequence of Lemma~\ref{CBDD} and the Sobolev embedding $W^{2,2}(\Omega) \hookrightarrow L^{\infty}(\Omega)$.
\end{proof}

Next, we bootstrap the integrability of $n$ further.
Using the $p=3$ energy estimate for \eqref{KS01}$_1$, the uniform $W^{2,2}$-bound for $c$, and Lemma~\ref{LNIMPR}, we obtain a uniform-in-time $L^3$ bound for $n$.

\begin{lemma}\label{LEM14}
Assume that either $\gamma>\frac12$ and $s_1>0$, or $\gamma>0$, $s_1\ge0$ and the system is fluid-free.
Then there exists $C=C (\Omega,\alpha, s_0, s_1, \gamma, \int_\Omega n_0,\|n_0\|_{L^2(\Omega)}, \|n_0\|_{L^3(\Omega)}, \|u_0\|_{L^2(\Omega)},\|A^\alpha u_0\|_{L^2(\Omega)},\|\nabla\Phi\|_{L^\infty(\Omega)})>0$, with the dependence on $\alpha, u_0,\Phi$ dropped in the fluid-free case, such that
\[
\sup_{t<T_{\rm max}} \| n(\cdot,t)\|_{L^{3}(\Omega)}   \le C.
  \]
\end{lemma}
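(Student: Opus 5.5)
We take $p=3$ in \eqref{LPrho}, which gives
\[
\frac13\frac{d}{dt}\int_\Omega n^3+2\int_\Omega n|\nabla n|^2 = 2\int_\Omega n^2\nabla n\cdot S(x,n,c)\nabla c .
\]
By \eqref{KS02} and \eqref{CLOWER}, $|S|\le C$ uniformly. Young's inequality then bounds the right-hand side by
\[
\int_\Omega n|\nabla n|^2 + C\int_\Omega n^3|\nabla c|^2 .
\]
The first of these terms is absorbed into the dissipation.

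For the second term we use Lemma~\ref{CBDD} together with the embedding $W^{2,2}(\Omega)\hookrightarrow W^{1,q}(\Omega)$ for every $q<\infty$, which gives $\sup_t\|\nabla c\|_{L^8(\Omega)}\le C$. Hölder's inequality yields
\[
\int_\Omega n^3|\nabla c|^2\le \|\nabla c\|_{L^8}^2\|n\|_{L^4}^3\le C\|n\|_{L^4}^3 .
\]
Next we write $\|n\|_{L^4}^3=\|n^{3/2}\|_{L^{8/3}}^2$ and apply the Gagliardo--Nirenberg inequality to $w=n^{3/2}$, for which $|\nabla w|^2=\frac94 n|\nabla n|^2$. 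Using $\|w\|_{L^{4/3}}^{4/3}=\int n^2$, which is bounded uniformly by Lemma~\ref{RHOL2}, this gives
\[
\|w\|_{L^{8/3}}^2\le C\|\nabla w\|_{L^2}^{2\theta}\|w\|_{L^{4/3}}^{2(1-\theta)}+C\|w\|_{L^{4/3}}^2 .
\]
Here $\theta<1$; explicitly, $\frac38=\theta(\frac12-\frac12)+(1-\theta)\frac34$ gives $\theta=\frac12$. So the term is bounded by $C\|\nabla w\|_{L^2}+C$, and Young's inequality absorbs it into $\frac12\int n|\nabla n|^2$ up to an additive constant.

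It remains to produce a damping term. Again by Gagliardo--Nirenberg with the $L^{4/3}$ bound on $w$, we have $\int n^3=\|w\|_{L^2}^2\le C\|\nabla w\|_{L^2}+C$. Hence
\[
\frac{d}{dt}\int_\Omega n^3+\int_\Omega n^3\le C .
\]
An ODE comparison then gives $\sup_t\int n^3\le\max\{\int n_0^3,C\}$.

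Alternatively, following the text's hint, Lemma~\ref{LNIMPR} with $p=2$, $f=n$ and $s$ large can control $\int n^3$ by $\frac{C}{\ln s}\int(n\ln n+e^{-1})\int n|\nabla n|^2$ plus constants, using Corollary~\ref{COR1} for the uniform $L\ln L$ bound. This gives smallness without passing through $L^4$. In the fluid-coupled case there is no extra contribution, since $\int u\cdot\nabla n\, n^2=0$ by incompressibility and $u=0$ on $\partial\Omega$.

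The main obstacle is to verify carefully that the interpolation exponent is strictly subcritical, so that absorption works, and to keep $|S|$ bounded uniformly via the lower bound \eqref{CLOWER}, which is the only place where $s_1>0$ or the fluid-free assumption is used. The dependence on $\|n_0\|_{L^3}$ enters through the initial value in the ODE comparison.
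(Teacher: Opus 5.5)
Your argument is correct, but it takes a different and simpler route than the paper. The paper keeps $\nabla c$ coupled to $n$. It bounds the right-hand side by $\|n\|_{L^4}^{3/2}\|\nabla n^{3/2}\|_{L^2}\|c\|_{W^{1,8}}$. It then uses the Gagliardo--Nirenberg inequality $\|c\|_{W^{1,8}}\le C(\|c\|_{L^\infty}^{1/2}\|c\|_{W^{2,4}}^{1/2}+\|c\|_{L^\infty})$, together with Corollary~\ref{CBDDCOR}, $W^{2,4}$ elliptic regularity and Corollary~\ref{UINFCOR}, to get $\|c\|_{W^{1,8}}\le C(\|n\|_{L^4}^{1/2}+1)$. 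This leads to the critical bound $C(\|n\|_{L^4}^{2}+1)\|\nabla n^{3/2}\|_{L^2}$. The required small factor comes from Lemma~\ref{LNIMPR} with $(f,p,\varepsilon)=(n,3,1)$ and the $L\ln L$ bound of Corollary~\ref{COR1}: $\|n\|_{L^4}^2\le \frac{C}{\sqrt{\ln s}}\|\nabla n^{3/2}\|_{L^2}+C(1+s^2)$, with $s$ taken large.

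You instead use two bounds already in hand. Lemma~\ref{CBDD} gives $\sup_t\|\nabla c\|_{L^8}\le C$ directly, via $W^{2,2}(\Omega)\hookrightarrow W^{1,8}(\Omega)$ in two dimensions. Lemma~\ref{RHOL2} gives $n\in L^\infty_tL^2_x$, so $\|n^{3/2}\|_{L^{4/3}}$ is bounded. With that base norm the interpolation exponent is $\theta=\frac12$, so the estimate is subcritical and plain Young's inequality closes it without any smallness. Your route shows that Lemma~\ref{LNIMPR}, the $W^{2,4}$ estimate and Corollary~\ref{CBDDCOR} are unnecessary at this stage. The paper follows the standard entropy-based Keller--Segel template, in which smallness comes from the $L\ln L$ bound rather than from the stronger $L^2$ bound.

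There is one slip in your side remark, which your main argument does not use. Lemma~\ref{LNIMPR} with $p=2$ involves $\int_\Omega|\nabla n|^2$, since $f^{p-2}\equiv1$, not the dissipation $\int_\Omega n|\nabla n|^2$. To match the dissipation you need $p=3$, which controls $\int_\Omega n^4$; that is exactly the paper's choice.

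Also, the hypotheses on $\gamma$ and $s_1$ enter not only through \eqref{CLOWER}. They are also needed to invoke Lemmas~\ref{RHOL2} and \ref{CBDD}.
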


\begin{proof}
From \eqref{LPrho} with $p=3$, we have
\[
\frac{1}{3}\frac{d}{dt}\int_{\Omega}n^{3}+\frac{8}{9}\int_{\Omega} |\nabla n^{\frac{3}{2}} |^{2}=\frac{4}{3}\int_{\Omega}n^{\frac{3}{2}}\nabla n^{\frac{3}{2}}\cdot  S(x,n,c)\cdot \nabla c.
\]
Using  H\"older's inequality and \eqref{KS02}, we estimate the right-hand-side as
\begin{align*}
\begin{aligned}
\biggr{|}\frac{4}{3}\int_{\Omega}n^{\frac{3}{2}}\nabla n^{\frac{3}{2}} \cdot & S(x,n,c)\cdot \nabla c \biggr{|}
\le \frac{4}{3}s_{0}\|n\|_{L^{4}(\Omega)}^{\frac{3}{2}}\|   \nabla n^{\frac{3}{2}} \|_{L^{2}(\Omega)}\|c\|_{W^{1,8}(\Omega)}.
\end{aligned}
\end{align*}
Note that using the Gagliardo-Nirenberg inequality of type
\[
\|f\|_{W^{1,8}(\Omega)} \le C(\|f\|_{L^{\infty}(\Omega)}^{\frac{1}{2}}\|f\|_{W^{2,4}(\Omega)}^{\frac{1}{2}}+\|f\|_{L^{\infty}(\Omega)})\quad\mbox{ for all }\,\, f\in C^{2}(\overline{\Omega}),
\]
Corollary~\ref {CBDDCOR}, and the standard elliptic regularity theory,  we can find $C>0$ satisfying
\[
\|c\|_{W^{1,8}(\Omega)}\le C (\|n \|_{L^{4}(\Omega)}^{\frac{1}{2}}+\|u\cdot \nabla c\|_{L^{4}(\Omega)}^{\frac{1}{2}} + 1 ).
\]
This along with the uniform-in-time boundedness of $\|u\|_{L^{\infty}(\Omega)}$ in Corollary~\ref{UINFCOR} and Young's inequality shows that with some $C>0$
\[
\|c\|_{W^{1,8}(\Omega)}\le C (\|n \|_{L^{4}(\Omega)}^{\frac{1}{2}}+1).
\]
Therefore, we have $C>0$ such that 
\[
\frac{1}{3}\frac{d}{dt}\int_{\Omega}n^{3}+\frac{8}{9}\int_{\Omega} |\nabla n^{\frac{3}{2}} |^{2} \leq C \left( \|n\|_{L^{4}(\Omega)}^{2} + 1\right)  \|\nabla n^{\frac{3}{2}} \|_{L^{2}(\Omega)}.
\]
Note also that Lemma~\ref{LNIMPR} with $(f,p,\varepsilon)=(n,3,1)$ and Corollary~\ref{COR1} yields with some $C>0$ independent of $s>1$
\[
\int_{\Omega} n^{4}
 \le  \frac{C}{\ln s} \int_{\Omega} |\nabla n^{\frac{3}{2}}|^{2} + (4C)^{\frac{3}{2}}\bke{\int_{\Omega}n_{0}}^{4}+6s^{4}|\Omega|.
\]
Thus, combining the above estimates, we can find $C >0$  independent of $s>1$ satisfying
\begin{align*}
\begin{aligned}
\frac{1}{3}&\frac{d}{dt}\int_{\Omega}n^{3}+\frac{8}{9}\int_{\Omega} |\nabla n^{\frac{3}{2}} |^{2}
\le C \bke{ \frac{1}{\sqrt{\ln s}}\|   \nabla n^{\frac{3}{2}} \|_{L^{2}(\Omega)}+1+s^{2} }\|   \nabla n^{\frac{3}{2}} \|_{L^{2}(\Omega)}.
\end{aligned}
\end{align*}
We take sufficiently large $s$ and use Young's inequality to obtain $C>0$ such that
\[
 \frac{d}{dt}\int_{\Omega}n^{3}+ \int_{\Omega} |\nabla n^{\frac{3}{2}} |^{2}\le C.
\]
This implies, by the Gagliardo-Nirenberg type inequality,
\[
\|f\|_{L^{3}(\Omega)}^{3}\le \|\nabla f^{\frac{3}{2}}\|_{L^{2}(\Omega)}^{2}+C\|f\|_{L^{1}(\Omega)}^{3}\quad\mbox{for all}\quad f\in C^{1}(\overline{\Omega}),
\] 
and the mass conservation property of $n$, that with some $C>0$,
\[
 \frac{d}{dt}\int_{\Omega}n^{3}+\int_{\Omega}n^{3}\le C.
\]
Therefore, by an ODE-comparison argument, we can conclude the desired result.
\end{proof}

We conclude the proof by showing that the local classical solution from Lemma~\ref{LWLEM} is uniformly bounded in time.
To this end, we first upgrade the regularity of $c$ and then apply a Moser--type iteration. 

\medskip 

\begin{pfthm1}
Applying the elliptic regularity theory to the $c$-equation gives that there exists $C=C(\Omega)>0$ such that
\[
\|c\|_{W^{2,3}(\Omega)}\le C\bke{    \|n\|_{L^{3}(\Omega)}+ \|u\cdot \nabla c\|_{L^{3}(\Omega)}          }.
\]
Note from H\"older's inequality and $W^{2,2}(\Omega)\hookrightarrow W^{1,3}(\Omega)$ that
\[
 \|u\cdot \nabla c\|_{L^{3}(\Omega)}  \le \|u\|_{L^{\infty}(\Omega)}\| c\|_{W^{2,2}(\Omega)}.
\]
Thus, due to Corollary~\ref{UINFCOR}, Lemma~\ref{CBDD} and Lemma~\ref{LEM14}, it follows that with some $C>0$
\[
\sup_{t<T_{\rm max}}\| c(\cdot,t)\|_{W^{2,3}(\Omega)}\le C. 
\]
Since $W^{2,3}(\Omega)\hookrightarrow W^{1,\infty}(\Omega)$, using the uniform-in-time bound of $\|\nabla c\|_{L^{\infty}(\Omega)}$ and Moser-type iteration argument, we have 
\[
\sup_{t<T_{\rm max}}\|n(\cdot,t)\|_{L^{\infty}(\Omega)}\le C.
\]
\end{pfthm1}

\section{Large--Time Behavior: Proof of Theorem~\ref{THM3}}\label{S:Asympt}

In this section, we prove the asymptotic stabilization of the fluid-free system as stated in Theorem~\ref{THM3}. We begin by establishing a general interpolation inequality involving the H\"older norm. This inequality will be used later in upgrading exponential convergence in $L^2(\Omega)$ to  exponential convergence in $L^{\infty}(\Omega)$.
\begin{lemma}\label{EHRLEM}
Let $\Omega \subset \mathbb{R}^d$, $d\ge1$, be a bounded Lipschitz domain, $\theta \in (0,1)$, and $p \geq 1$. Then there exists $C = C(\Omega, d, \theta, p) > 0$ such that 
\begin{equation}\label{ineq:main}
  \norm{f}_{L^\infty(\Omega)}
  \leq C\left([f]_{C^\theta(\overline\Omega)}^{\,\frac{d}{p\theta + d}}\,\norm{f}_{L^p(\Omega)}^{\frac{p\theta}{p\theta + d}}
  + \norm{f}_{L^p(\Omega)}\right)\quad \mbox{ for all }\quad f \in C^\theta(\overline\Omega).
\end{equation}
In particular, with some $C = C(\Omega, d, \theta, p) > 0$
\begin{equation}\label{ineq:full}
  \norm{f}_{L^\infty(\Omega)}
  \leq C\,\norm{f}_{C^\theta(\overline\Omega)}^{\,\frac{d}{p\theta + d}}\,\norm{f}_{L^p(\Omega)}^{\frac{p\theta}{p\theta + d}}\quad \mbox{ for all }\quad f \in C^\theta(\overline\Omega).
\end{equation}
\end{lemma}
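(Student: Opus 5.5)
The plan is a pointwise argument at a near-maximum point, combined with a uniform interior cone (or ball-intersection) property of Lipschitz domains. Fix $f\in C^\theta(\overline\Omega)$ and write $M:=\|f\|_{L^\infty(\Omega)}$, $H:=[f]_{C^\theta(\overline\Omega)}$. Since $f$ is continuous on the compact set $\overline\Omega$, there is $x_0\in\overline\Omega$ with $|f(x_0)|=M$.

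\textbf{Step 1 (geometric input).} Because $\Omega$ is bounded and Lipschitz, there exist $r_0>0$ and $c_0>0$, depending only on $\Omega$ and $d$, such that
\[
|\Omega\cap B_r(x)|\ge c_0 r^d\quad\text{for all } x\in\overline\Omega,\ r\in(0,r_0].
\]
This follows from the uniform interior cone condition: every point of $\overline\Omega$ is the vertex of a cone of fixed aperture and height contained in $\Omega$. I would cite this as standard rather than prove it.

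\textbf{Step 2 (local lower bound).} For $x\in\Omega\cap B_r(x_0)$ we have $|f(x)|\ge M-Hr^\theta$. Choose $r$ so that $Hr^\theta\le M/2$. Then $|f|\ge M/2$ on $\Omega\cap B_r(x_0)$, and therefore
\[
\|f\|_{L^p(\Omega)}^p\ge (M/2)^p c_0 r^d.
\]

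\textbf{Step 3 (choice of radius and two cases).} Take $r=\min\{r_0,(M/(2H))^{1/\theta}\}$; if $H=0$, $f$ is constant and the claim is trivial.

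Case (a): $r=(M/(2H))^{1/\theta}$. Then $\|f\|_{L^p}^p\ge c\,M^{p}(M/H)^{d/\theta}$, so $M^{p+d/\theta}\le C H^{d/\theta}\|f\|_{L^p}^p$. Raising both sides to the power $\theta/(p\theta+d)$ gives
\[
M\le C H^{d/(p\theta+d)}\|f\|_{L^p}^{p\theta/(p\theta+d)}.
\]

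Case (b): $r=r_0$. Then $\|f\|_{L^p}^p\ge c M^p r_0^d$, which gives $M\le C\|f\|_{L^p}$.

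Adding the two cases yields \eqref{ineq:main}.

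\textbf{Step 4 (the full-norm version).} For \eqref{ineq:full}, take $\|f\|_{C^\theta}=\|f\|_{L^\infty}+H$, so that $H\le\|f\|_{C^\theta}$. It remains to bound the term $\|f\|_{L^p}$ by the product. Since $\|f\|_{L^p}\le|\Omega|^{1/p}\|f\|_{L^\infty}\le C\|f\|_{C^\theta}$, we get
\[
\|f\|_{L^p}=\|f\|_{L^p}^{d/(p\theta+d)}\|f\|_{L^p}^{p\theta/(p\theta+d)}\le C\|f\|_{C^\theta}^{d/(p\theta+d)}\|f\|_{L^p}^{p\theta/(p\theta+d)}.
\]

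\textbf{Main obstacle.} The only nontrivial point is Step 1: the lower density bound must hold uniformly up to the boundary, with constants independent of $x_0$. This is exactly where the Lipschitz regularity is used. Everything else is elementary bookkeeping of exponents.
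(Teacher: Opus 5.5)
Your proof is correct and is essentially the paper's argument. The paper bounds $|f(x)|$ at every $x\in\overline\Omega$ by $[f]_{C^\theta}r^\theta$ plus the local average, controls that average by $|B_r(x)\cap\Omega|^{-1/p}\|f\|_{L^p}$ using the same Lipschitz lower density bound $|B_r(x)\cap\Omega|\ge r^d/C$, optimizes in $r$ with the same two-case split (optimal radius inside or outside the admissible range), and proves \eqref{ineq:full} exactly as in your Step 4; your version runs the same estimate in reverse, via $|f|\ge M/2$ near a maximizer, and this is equivalent.
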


\begin{proof}
First, we show \eqref{ineq:main}. In the case of  $[f]_{C^\theta} = 0$, $f$ is constant on $\overline\Omega$ and thus,
$
  \norm{f}_{L^\infty}
  = \abs{\Omega}^{-\frac{1}{p}}\,\norm{f}_{L^p}$ implies  \eqref{ineq:main}.
It remains to consider   the case $[f]_{C^\theta} > 0$.
Fix $x \in \overline\Omega$ and let $0 < r \leq R_0 := {\rm diam}(\Omega)$. For every $y \in B_r(x) \cap \Omega$,
\[
  \abs{f(x)}
  \leq \abs{f(x) - f(y)} + \abs{f(y)}
  \leq [f]_{C^\theta}\,r^\theta + \abs{f(y)}.
\]
Averaging over $y \in B_r(x) \cap \Omega$ gives
\[
  \abs{f(x)}
  \leq [f]_{C^\theta}\,r^\theta
  + \frac{1}{\abs{B_r(x) \cap \Omega}}\int_{B_r(x) \cap \Omega}\abs{f(y)}\,dy,
\]
and by H\"older's inequality,
\[
  \frac{1}{\abs{B_r(x) \cap \Omega}}\int_{B_r(x) \cap \Omega}\abs{f(y)}\,dy
  \leq \abs{B_r(x) \cap \Omega}^{-1/p}\,\norm{f}_{L^p(\Omega)}.
\]
Note that since $\Omega$ is Lipschitz, an interior cone condition provides $C= C(\Omega) > 0$ satisfying
\[
  \abs{B_r(x) \cap \Omega} \geq \frac{r^d}{C},
  \quad\mbox{ for all } x \in \overline\Omega,\;\;   r \in (0, R_0].
\]
Thus, it follows that with some $C_{13}=C_{13}(\Omega,p)>0$
\begin{equation}\label{eq:pointwise}
  \abs{f(x)} \leq [f]_{C^\theta}\,r^\theta + C_{13}\,r^{-d/p}\,\norm{f}_{L^p(\Omega)}   \quad\mbox{ for all } x \in \overline\Omega,\;\;   r \in (0, R_0].
\end{equation}
Define the right-hand side as
\[
  g(r) := [f]_{C^\theta}\,r^\theta + C_{13}\,r^{-d/p}\,\norm{f}_{L^p},
  \qquad r \in (0, R_0],
\]
and let 
\[
  r_* := \left(\frac{d\,C_{13}\,\norm{f}_{L^p}}{p\,\theta\,[f]_{C^\theta}}\right)^{\!\frac{p}{p\theta+d}}.
\]
We conisder two cases, $r_* \leq R_0$ and $r_* > R_0$, separately.

$\bullet$ \emph{Case~1: $r_* \leq R_0$.}
Note that $g'\ge0$ in $(0,r_{*}]$ and $g'\le0$ in $[r_{*},R_{0}]$.
Plugging $r_{*}$ into the first term of $g$,
\[
\begin{aligned}
  [f]_{C^\theta}\, r_*^\theta
  &= [f]_{C^\theta}\cdot\left(\frac{d\,C_{13}}{p\,\theta}\right)^{\!\frac{p\theta}{p\theta + d}}
  \left(\frac{\norm{f}_{L^p}}{[f]_{C^\theta}}\right)^{\!\frac{p\theta}{p\theta + d}}
  \\&= \left(\frac{d\,C_{13}}{p\,\theta}\right)^{\!\frac{p\theta}{p\theta + d}}
  [f]_{C^\theta}^{\,\frac{d}{p\theta + d}}\,\norm{f}_{L^p}^{\,\frac{p\theta}{p\theta + d}}.
\end{aligned} 
\]
A  similar computation for the second term of $g$ gives the same structure. Therefore, along with \eqref{eq:pointwise} we can deduce that there exists $C = C(\Omega, d, \theta, p) > 0$ satisfying
\begin{equation}\label{eq:case1}
  \norm{f}_{L^\infty(\Omega)}\leq C \, [f]_{C^\theta}^{\,\frac{d}{p\theta + d}}\,\norm{f}_{L^p}^{\,\frac{p\theta}{p\theta + d}}.
\end{equation}
Namely, \eqref{ineq:main} holds.

$\bullet$ \emph{Case~2: $r^* > R_0$.}
Note that $g'\ge0$ in $(0,R_{0}]$. Note also that
the condition $r^* > R_0$ implies
\begin{equation}\label{eq:case2cond}
  [f]_{C^\theta}
  < \frac{d\,C_{13}}{p\,\theta}\,R_0^{-(p\theta+d)/p}\,\norm{f}_{L^p}.
\end{equation}
Substituting $r = R_0$ into $g$ gives
\[
  g(R_{0})
  \leq [f]_{C^\theta}\,R_0^\theta + C_{13}\,R_0^{-d/p}\,\norm{f}_{L^p}.
\]
We use \eqref{eq:case2cond} to estimate the first term as
\[
  [f]_{C^\theta}\,R_0^\theta
  < \frac{d\,C_{13}}{p\,\theta}\,R_0^{\theta-(p\theta+d)/p}\,\norm{f}_{L^p}
  = \frac{d\,C_{13}}{p\,\theta}\,R_0^{-d/p}\,\norm{f}_{L^p}.
\]
Therefore,   along with \eqref{eq:pointwise} we can deduce that
\begin{equation}\label{eq:case2}
  \norm{f}_{L^\infty(\Omega)}
  \leq \left(\frac{d\,C_{13}}{p\,\theta} + C_{13}\right)R_0^{-d/p}\,\norm{f}_{L^p}.
\end{equation}
Namely, \eqref{ineq:main} holds.

Next, we show \eqref{ineq:full}. Note that
\[
  \norm{f}_{L^p(\Omega)} \leq \abs{\Omega}^{1/p}\,\norm{f}_{L^\infty(\Omega)}
  \leq \abs{\Omega}^{1/p}\,\norm{f}_{C^\theta(\overline\Omega)}
\]
and thus, 
\[
  \norm{f}_{L^p}
  = \norm{f}_{L^p}^{\,\frac{d}{p\theta + d}}\cdot\norm{f}_{L^p}^{\frac{p\theta}{p\theta + d}}
  \leq \left(\abs{\Omega}^{1/p}\,\norm{f}_{C^\theta}\right)^{\frac{d}{p\theta + d}}\cdot\norm{f}_{L^p}^{\frac{p\theta}{p\theta + d}}.
\]
Substituting it into the rightmost term of \eqref{ineq:main} yields \eqref{ineq:full}.
\end{proof}
Now we establish 
the uniform-in-time boundedness of solutions in the H\"older norm.
\begin{lemma}\label{HOLDERNORM}
Let $(n,c)$ be a unique bounded classical solution obtained in Theorem~\ref{THM1}. Then for any $\theta\in(0,1)$ there exists $C>0$ satisfying
\[
\|n\|_{C^{\theta,\theta/2}(\overline{\Omega}\times[t,t+1])}\le C\quad\mbox{ for all }t>1.
\]
\end{lemma}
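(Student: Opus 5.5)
The plan is to treat the $n$-equation of the fluid-free system as a linear parabolic equation in divergence form with bounded coefficients, and then invoke standard parabolic H\"older regularity. By Theorem~\ref{THM1}(ii),
\[
\sup_{t>0}\bigl(\|n(\cdot,t)\|_{L^\infty(\Omega)}+\|c(\cdot,t)\|_{W^{1,\infty}(\Omega)}\bigr)\le C_0 .
\]
By \eqref{CLOWER}, $s_1+c\ge C>0$ uniformly, so \eqref{KS02} gives $|S(x,n,c)|\le s_0 C^{-\gamma}$. Hence the flux
\[
\mathbf{b}:=nS(x,n,c)\nabla c
\]
belongs to $L^\infty(\Omega\times(0,\infty))$, with a bound independent of time. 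Thus $n$ is a bounded weak solution of
\[
n_t=\nabla\cdot(\nabla n-\mathbf{b})
\]
with the conormal boundary condition $(\nabla n-\mathbf{b})\cdot\nu=0$ on $\partial\Omega$.

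First, on each cylinder $\overline\Omega\times[t-1,t+1]$ with $t>1$, I will apply the global (up to the boundary) H\"older estimate of Porzio--Vespri / Ladyzhenskaya--Ural'tseva type for divergence-form equations with conormal boundary data. For this equation, bounded solutions are H\"older continuous on $\overline\Omega\times[t-\frac12,t+1]$ for some exponent $\theta_0\in(0,1)$. The H\"older norm is bounded by a constant depending only on $\Omega$, $\|n\|_{L^\infty}$ and $\|\mathbf{b}\|_{L^\infty}$. Since these bounds are time-independent, so is the constant. The cutoff in time is harmless because we have the full interval $[t-1,t]$ available before the window.

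Second, to reach an arbitrary $\theta\in(0,1)$, I will bootstrap. A Hölder bound on $n$ gives, by elliptic Schauder theory for $-\Delta c+c=n$ with Neumann data, a uniform $C^{2+\theta_0}$ bound on $c$ in space, with Hölder continuity in time inherited from $n$. Then $\mathbf{b}$ is uniformly Hölder continuous, because $S\in C^2$ is evaluated on the compact range of $(x,n,c)$ with $c$ bounded below. Parabolic Schauder estimates for the conormal problem then give
\[
\nabla n\in C^{\theta_1,\theta_1/2}
\]
uniformly on slightly smaller time windows. In particular $n$ is Lipschitz in $x$ and $C^{1/2}$ in $t$. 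Restricting to the unit-length window $[t,t+1]$ and interpolating with $\|n\|_{L^\infty}$ then bounds $\|n\|_{C^{\theta,\theta/2}}$ for every $\theta<1$. Alternatively, one can apply $W^{2,1}_q$ maximal regularity with $q$ large to $n_t-\Delta n=-\nabla\cdot\mathbf{b}$, after expanding the divergence using the Schauder bound on $c$, and then use the embedding $W^{2,1}_q\hookrightarrow C^{\theta,\theta/2}$ for $q>$ the needed threshold.

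The main obstacle I expect is ensuring uniformity in $t$ at the first (De Giorgi–Nash) step up to the boundary with the conormal condition. I intend to handle this by citing the standard result, e.g.\ Porzio--Vespri, which requires only $L^\infty$ bounds on the solution and the drift on a fixed-length time cylinder. All remaining steps are then routine Schauder bootstrapping with time-independent constants.
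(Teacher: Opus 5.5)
Your first step is exactly the paper's proof. The paper writes the $n$-equation as $n_t=\Delta n-\nabla\cdot\vec a_1$ with $\vec a_1=nS(x,n,c)\nabla c\in L^\infty(\Omega\times(0,\infty))$, cites Porzio--Vespri's global H\"older estimate under the no-flux condition, and stops there. Your added Schauder bootstrap is correct: it gives an elliptic $C^{2+\theta_0}$ bound for $c$, hence a H\"older flux, hence a $C^{1+\theta_1,(1+\theta_1)/2}$ estimate for the conormal problem. That bootstrap is what actually delivers every $\theta\in(0,1)$ as the statement claims, since the De Giorgi--Nash type result alone only yields some small exponent $\theta_0$. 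The later use via \eqref{ineq:full} needs only one fixed $\theta$, though, so the paper's shorter argument is enough for its purposes.
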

\begin{proof}
The $n$-equation can be rewritten as 
$n_{t}=\Delta n-\nabla \cdot \vec{a}_{1}(x,t)$, where $\vec{a}_{1}(x,t)=n\,S(x,n,c)\nabla c$. Since Theorem~\ref{THM1} along with \eqref{KS02} yields $\vec{a}_{1}$ is bounded in $L^{\infty}(0,\infty;L^{\infty}(\Omega))$ we can deduce the desired result by standard H\"older regularity, for instance \cite[Thm.~1.3]{Porzio_1993}.
\end{proof}
We recall the following two lemmas from \cite[Lem.~3 and Lem.~4]{Ahn_Kang_Lee_2019}.
\begin{lemma}\label{cruciallem}
Let $\Omega\subset \R^{d}$, $d\ge1$, be a smoothly bounded domain. Suppose that $\mathcal{H}\in C^{1}( (0,\infty)  )$ is positive such that $\mathcal{H}^{\prime}>0$, and $f\in C^{2}(\overline{\Omega})$ is positive such that $\nabla f\cdot n=0$ on $\partial\Omega$. Then, for $k\in\R$,  $\delta>0$, and  $\Theta(f):=\int_{1}^{f}\frac{1}{\mathcal{H}(\tau)}d\tau$,  we have
\begin{equation}\label{cruciallem_1}
\int_{\Omega}\frac{\mathcal{H}^{\prime}(f)}{\mathcal{H}^{k}(f)}\abs{\nabla f}^{4} \le
\left\{
\begin{array}{ll}
\displaystyle\bke{    \frac{2+\sqrt{d}}{4-k}     }^{2}\int_{\Omega}\frac{\mathcal{H}^{4-k}(f)}{\mathcal{H}^{\prime}(f)}\abs{D^{2}\Theta(f)     }^{2},\quad\quad&  k\neq4,\\
\displaystyle\bke{    2+\sqrt{d}     }^{2}\int_{\Omega}\frac{\abs{\log \mathcal{H}(f)}^{2}}{\mathcal{H}^{\prime}(f)}\abs{D^{2}\Theta(f)     }^{2},\quad\quad&  k=4,
\end{array}
\right.
\end{equation}
\begin{equation}\label{cruciallem_2}
\int_{\Omega}\!\!\frac{\mathcal{H}^{2-k}(f)}{\mathcal{H}^{\prime}(f)}\abs{  D^{2}f    }^{2}\le\frac{\delta+1}{\delta}\!\int_{\Omega}\frac{\mathcal{H}^{\prime}(f)}{\mathcal{H}^{k}(f)}\abs{\nabla f}^{4}+\bke{1+\delta}\!\!\int_{\Omega}\!\!\frac{\mathcal{H}^{4-k}(f)}{\mathcal{H}^{\prime}(f)}\abs{D^{2}\Theta(f)     }^{2}.
\end{equation}
%In particular, we have
%\begin{equation}\label{d2loglemgoal1}
%\int_{\Omega}\frac{\abs{\nabla f}^{4}}{f^{2}}dx\le\bke{ 1+\frac{\sqrt{d}}{2}   }^{2}\int_{\Omega}f^{2}\abs{D^{2}\log f    }^{2}dx,
%\end{equation}
%\begin{equation}\label{d2loglemgoal2}
%\int_{\Omega} \abs{D^{2}f}^{2}dx\le \bke{2+\frac{\sqrt{d}}{2}    }^{2}\int_{\Omega}f^{2}\abs{D^{2}\log f    }^{2}dx.
%\end{equation}
\end{lemma}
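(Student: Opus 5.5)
We will prove both inequalities by rewriting everything in terms of $w:=\Theta(f)$. Since $\Theta'(s)=1/\mathcal{H}(s)$, we have
\[
\nabla w=\frac{\nabla f}{\mathcal{H}(f)},\qquad D^{2}w=\frac{D^{2}f}{\mathcal{H}(f)}-\frac{\mathcal{H}'(f)}{\mathcal{H}^{2}(f)}\nabla f\otimes\nabla f,
\]
and $\nabla w\cdot\nu=0$ on $\partial\Omega$. Because $f\in C^{2}(\overline{\Omega})$ is positive, its range is a compact subset of $(0,\infty)$. On that range $\mathcal{H}$, $\mathcal{H}'$ and $1/\mathcal{H}'$ are bounded, so all integrals involved are finite.

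\emph{Proof of \eqref{cruciallem_1} for $k\neq4$.} Set $I:=\int_{\Omega}\frac{\mathcal{H}'(f)}{\mathcal{H}^{k}(f)}|\nabla f|^{4}$. Substituting $\nabla f=\mathcal{H}(f)\nabla w$ gives
\[
I=\int_{\Omega}\mathcal{H}'\mathcal{H}^{4-k}|\nabla w|^{4}.
\]
Using $\mathcal{H}'\mathcal{H}^{3-k}\nabla f=\frac{1}{4-k}\nabla\bigl(\mathcal{H}^{4-k}(f)\bigr)$, this becomes
\[
I=\frac{1}{4-k}\int_{\Omega}\nabla\bigl(\mathcal{H}^{4-k}(f)\bigr)\cdot\nabla w\,|\nabla w|^{2}.
\]
We integrate by parts; the boundary term vanishes because $\partial_\nu w=0$. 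This yields
\[
I=-\frac{1}{4-k}\int_{\Omega}\mathcal{H}^{4-k}\bigl(\Delta w\,|\nabla w|^{2}+2\,\nabla w\cdot D^{2}w\,\nabla w\bigr).
\]
Applying $|\Delta w|\le\sqrt d\,|D^{2}w|$ gives
\[
I\le\frac{2+\sqrt d}{|4-k|}\int_{\Omega}\mathcal{H}^{4-k}|D^{2}w||\nabla w|^{2}.
\]
We split the integrand as $\bigl(\mathcal{H}'\mathcal{H}^{4-k}\bigr)^{1/2}|\nabla w|^{2}\cdot\bigl(\mathcal{H}^{4-k}/\mathcal{H}'\bigr)^{1/2}|D^{2}w|$ and apply Cauchy--Schwarz. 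This gives $I\le\frac{2+\sqrt d}{|4-k|}\,I^{1/2}J^{1/2}$ with $J:=\int_{\Omega}\frac{\mathcal{H}^{4-k}}{\mathcal{H}'}|D^{2}\Theta(f)|^{2}$. If $I=0$ there is nothing to prove; otherwise dividing by $I^{1/2}$ and squaring yields \eqref{cruciallem_1}.

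\emph{The case $k=4$.} The same argument applies with $\mathcal{H}'\mathcal{H}^{-1}\nabla f=\nabla\log\mathcal{H}(f)$ in place of $\frac{1}{4-k}\nabla\mathcal{H}^{4-k}$. After integrating by parts, $\log\mathcal{H}(f)$ replaces $\mathcal{H}^{4-k}/(4-k)$ in front of the second-order terms. We then bound $|\log\mathcal{H}|$ by its absolute value and split the integrand as $(\mathcal{H}')^{1/2}|\nabla w|^{2}\cdot|\log\mathcal{H}|(\mathcal{H}')^{-1/2}|D^{2}w|$. Cauchy--Schwarz and absorption then give the constant $(2+\sqrt d)^{2}$ together with the weight $|\log\mathcal{H}|^{2}/\mathcal{H}'$.

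\emph{Proof of \eqref{cruciallem_2}.} This is pointwise. From the formula for $D^{2}w$,
\[
D^{2}f=\mathcal{H}\,D^{2}w+\frac{\mathcal{H}'}{\mathcal{H}}\nabla f\otimes\nabla f .
\]
By $(a+b)^{2}\le(1+\delta)a^{2}+(1+\delta^{-1})b^{2}$,
\[
|D^{2}f|^{2}\le(1+\delta)\mathcal{H}^{2}|D^{2}w|^{2}+\frac{\delta+1}{\delta}\frac{\mathcal{H}'^{2}}{\mathcal{H}^{2}}|\nabla f|^{4}.
\]
Multiplying by $\mathcal{H}^{2-k}/\mathcal{H}'$ and integrating over $\Omega$ gives \eqref{cruciallem_2} exactly.

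\emph{Main obstacle.} The only delicate step is the integration by parts in \eqref{cruciallem_1}. One must choose the antiderivative so that $\nabla f$ is absorbed into the gradient of a function of $f$, and check that the boundary term vanishes under the Neumann condition. The rest is Cauchy--Schwarz and absorption.
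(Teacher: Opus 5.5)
Your proof is correct. You substitute $w=\Theta(f)$ and rewrite $\mathcal{H}'\mathcal{H}^{3-k}\nabla f$ as $\frac{1}{4-k}\nabla\mathcal{H}^{4-k}(f)$, or as $\nabla\log\mathcal{H}(f)$ when $k=4$. You then integrate by parts using $\partial_\nu w=0$, apply $|\Delta w|\le\sqrt d\,|D^2w|$ and Cauchy--Schwarz, and obtain \eqref{cruciallem_2} from the pointwise identity $D^2f=\mathcal{H}D^2w+\frac{\mathcal{H}'}{\mathcal{H}}\nabla f\otimes\nabla f$ with Young's inequality. The paper gives no proof of its own and imports the lemma from Ahn--Kang--Lee. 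Your argument is the standard integration-by-parts proof of this kind of estimate, so there is nothing to add.
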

\begin{lemma}\label{lem3prop2}
Let $\Omega\subset \R^{d}$, $d\ge1$, be a smoothly bounded domain and $\mu_1$ be the first positive Neumann eigenvalue of the Laplace operator $-\Delta$ on $\Omega$. For $f\in C^{2}(\overline{\Omega})$, such that $\int_{\Omega}f=0$ and $\nabla f\cdot n=0$ on $\partial\Omega$, we have
\[
\int_{\Omega}\abs{ f}^{2}\le \frac{1}{\mu_{1}} \int_{\Omega}\abs{\nabla f}^{2},\quad\qquad \int_{\Omega}\abs{\nabla f}^{2}\le \frac{1}{\mu_{1}} \int_{\Omega}\abs{\Delta f}^{2}.
\]
\end{lemma}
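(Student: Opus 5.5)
The statement just above is Lemma~\ref{lem3prop2}: two Poincar\'e-type inequalities for a mean-zero $f\in C^{2}(\overline{\Omega})$ with $\nabla f\cdot n=0$ on $\partial\Omega$. Both follow from the variational characterization of the first positive Neumann eigenvalue together with one integration by parts.

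\emph{First inequality.} We use the Rayleigh quotient characterization
\[
\mu_{1}=\inf\Bigl\{\tfrac{\int_\Omega|\nabla g|^{2}}{\int_\Omega g^{2}} : 0\not\equiv g\in W^{1,2}(\Omega),\ \int_\Omega g=0\Bigr\}.
\]
This holds because the Neumann Laplacian on a smoothly bounded domain has compact resolvent. Its eigenfunctions $\{e_j\}$ form an orthonormal basis of $L^{2}(\Omega)$, with eigenvalues $0=\mu_0<\mu_1\le\mu_2\le\cdots$, and $e_0$ is constant. A mean-zero $f$ has no $e_0$ component. Writing $f=\sum_{j\ge1}a_je_j$, we get $\int_\Omega|\nabla f|^{2}=\sum_{j\ge1}\mu_j a_j^{2}\ge\mu_1\sum_{j\ge1} a_j^{2}=\mu_1\int_\Omega f^{2}$.

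\emph{Second inequality.} Integrating by parts, the boundary term vanishes because $\nabla f\cdot n=0$, so
\[
\int_\Omega|\nabla f|^{2}=-\int_\Omega f\,\Delta f\le \|f\|_{L^{2}}\|\Delta f\|_{L^{2}}.
\]
Apply the first inequality in the form $\|f\|_{L^{2}}\le \mu_1^{-1/2}\|\nabla f\|_{L^{2}}$. This gives $\|\nabla f\|_{L^{2}}^{2}\le \mu_1^{-1/2}\|\nabla f\|_{L^{2}}\|\Delta f\|_{L^{2}}$. If $\nabla f\equiv0$ there is nothing to prove; otherwise divide by $\|\nabla f\|_{L^{2}}$ and square to obtain $\int_\Omega|\nabla f|^{2}\le\mu_1^{-1}\int_\Omega|\Delta f|^{2}$.

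Alternatively, the second inequality can be read off the same spectral expansion. Since $f\in C^{2}$ satisfies the Neumann condition, $-\Delta f$ has coefficients $\mu_j a_j$. Then $\int_\Omega|\Delta f|^{2}=\sum_{j\ge1}\mu_j^{2}a_j^{2}\ge\mu_1\sum_{j\ge1}\mu_j a_j^{2}=\mu_1\int_\Omega|\nabla f|^{2}$.

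No real obstacle is expected. The only point needing care is justifying the spectral facts: the discreteness of the Neumann spectrum, and that $\mu_1>0$ is the infimum over mean-zero functions. Both are standard consequences of the compact embedding $W^{1,2}(\Omega)\hookrightarrow L^{2}(\Omega)$ and the connectedness of $\Omega$, and I would simply cite them.
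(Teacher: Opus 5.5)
Your proof is correct. The paper gives no proof of this lemma: it quotes it from \cite[Lem.~3 and Lem.~4]{Ahn_Kang_Lee_2019}. So there is no in-paper argument to compare against, and your proof supplies the standard justification the paper omits.

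The first inequality is exactly the Rayleigh-quotient characterization of $\mu_1$ on mean-zero functions. Your derivation of the second inequality from the first is the cleanest route, and it uses no spectral information beyond the constant in the first inequality. The steps are:
\begin{itemize}
\item $\int_\Omega|\nabla f|^2=-\int_\Omega f\Delta f$, with the boundary term removed by $\nabla f\cdot n=0$;
\item Cauchy--Schwarz;
\item $\|f\|_{L^2}\le\mu_1^{-1/2}\|\nabla f\|_{L^2}$, with the trivial case $\nabla f\equiv0$ handled separately.
\end{itemize}

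Your spectral alternative also works but needs slightly more. To identify the coefficients of $-\Delta f$ as $\mu_j a_j$ through Green's second identity, the eigenfunctions $e_j$ must be regular up to $\partial\Omega$ and satisfy the Neumann condition classically. This holds on a smooth domain by elliptic regularity.

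You were right to single out connectedness as the hypothesis doing the real work. Suppose $\Omega$ had two components. Take a function equal to a different constant on each, with total integral zero. It has $\nabla f\equiv 0$ but $f\not\equiv 0$, so it violates the first inequality. ``Domain'' must therefore be read as connected. That is the standard convention, and it holds in the paper's application to $\tilde c=c-\overline{n_0}$.
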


We conclude the proof by establishing the exponential-in-time stabilization of the solution. To this end, we first derive the exponential convergence of $c$ in $H^1(\Omega)$ and employ it to obtain the exponential convergence of $n$ in $L^\infty(\Omega)$ by using the $n$-equation and the interpolation inequality involving the H\"older norm.

\medskip

\begin{pfthm3}
Denote $\overline{n_0}:=\frac{1}{|\Omega|}\int_{\Omega}n_{0}$, $\tilde{n} := n - \overline{n_0}$, and $\tilde{c} := c - \overline{n_0}$. Note that  
\[
\tilde{n}_t - \Delta \tilde{n} = -\nabla \cdot (n S(x,n,c)\cdot \nabla  c ),\qquad -\Delta \tilde{c} + \tilde{c} = \tilde{n} \qquad\mbox{ in } \,\,\Omega\times(0,\infty).
\]
Testing the $\tilde{n}$-equation by $\tilde{c}$ and using integration by parts gives 
\begin{equation}\label{CH1}
\frac{1}{2}\frac{d}{dt}\int_{\Omega}( \tilde{c}^{2}+|\nabla \tilde{c}|^{2})+\int_{\Omega }(|\nabla \tilde{c}|^{2}+|\Delta \tilde{c}|^{2}) = \int_{\Omega} n\nabla \tilde{c}\cdot S \cdot \nabla  c .
\end{equation}
Now we derive estimates for the right-hand side.

$\bullet$ \emph{Case (i): $\hat{S}$ is negative semi-definite.}   Since 
\[
\int_{\Omega} n\nabla \tilde{c}\cdot S \cdot \nabla  c  = \int_{\Omega} n\nabla  c \cdot \hat{S} \cdot \nabla  c \le 0,
\]
it follows by \eqref{CH1} and Lemma~\ref{lem3prop2} that
\[
\frac{1}{2}\frac{d}{dt}\int_{\Omega}( \tilde{c}^{2}+|\nabla \tilde{c}|^{2})+\mu_{1}\int_{\Omega}( \tilde{c}^{2}+|\nabla \tilde{c}|^{2}) \le 0.
\]
This yields, with some $C>0$
\begin{equation}\label{H1_decay}
\|\tilde{c}(\cdot, t) \|_{H^1(\Omega)} \le C  e^{-2\mu_{1}t} \quad \text{for all } t > 0.
\end{equation}
To obtain the convergences in better spaces, we note from Theorem~\ref{THM1} that $\tilde{n}$ is uniformly bounded. This implies, applying the standard elliptic regularity theory to $-\Delta \tilde{c} + \tilde{c} = \tilde{n}$, that $\tilde{c}$ is uniformly bounded in $W^{2,p}(\Omega)$ for any $p \in (1, \infty)$. Choosing $p > 2$, the Gagliardo-Nirenberg interpolation inequality provides $\kappa \in (0, 1)$ and $C > 0$ such that
\[
\|\tilde{c}\|_{W^{1,\infty}(\Omega)} \le C \|\tilde{c}\|_{W^{2,p}(\Omega)}^\kappa \|\tilde{c}\|_{H^1(\Omega)}^{1-\kappa}.
\]
Thus, by \eqref{H1_decay}, with some $C>0$
\begin{equation}\label{CWINFDEC}
\|\tilde{c}(\cdot, t)  \|_{W^{1,\infty}(\Omega)}\le C  e^{-2(1-\kappa)\mu_{1}t} \quad \text{for all } t > 0.
\end{equation}
Now, testing the $\tilde{n}$-equation by $\tilde{n}$ and using \eqref{KS02}, Lemma~\ref{ULE}, and Young's inequality, we observe that with some $C=C(\Omega, s_{0},s_{1},\gamma)>0$
\[
\begin{aligned}
\frac{1}{2}\frac{d}{dt}\int_{\Omega}\tilde{n}^{2}+\int_{\Omega}|\nabla \tilde{n}|^{2}&=\int_{\Omega} n \nabla \tilde{n}\cdot S(x,n,c)\cdot \nabla  \tilde{c}
\\&\le \frac{1}{2}\int_{\Omega}|\nabla \tilde{n}|^{2}+C\int_{\Omega}n^{2}|\nabla \tilde{c}|^{2}.
\end{aligned}
\]
By the Poincar\'e inequality
\[
\int_{\Omega}  f ^{2} \le \frac{1}{\mu_{\rm P}}\int_{\Omega}|\nabla f|^{2}\quad \mbox{ for all }\quad f\in C^{1}(\overline{\Omega}) \mbox{ with } \int_{\Omega}f=0,
\]
$n\in L^{\infty}(0,\infty;L^{\infty}(\Omega))$, and \eqref{CWINFDEC}, it follows that with some $C>0$
\[
\frac{d}{dt}\int_{\Omega}\tilde{n}^{2}+\mu_{\rm P}\int_{\Omega}   \tilde{n} ^{2}\le C e^{-4(1-\kappa)\mu_{1}t}\quad \text{for all } t > 0.
\]
This implies $\tilde{n} \rightarrow 0$ in $L^{2}(\Omega)$ exponentially in time. Thus, we arrive at the exponential decay  $\tilde{n} \rightarrow 0$ in $L^{\infty}(\Omega)$ by Lemma~\ref{HOLDERNORM} and \eqref{ineq:full} with $d=p=2$.

$\bullet$ \emph{Case (ii): $S=\frac{s_{0}}{c^{\gamma}} \mathbb{I}$, $\gamma\ge1$, $s_{0}< s_{\star} $, and $\Omega$ is convex.}  
Replacing $n$ with $  c-\Delta c$ on the right-hand side of \eqref{CH1},
\begin{equation}\label{FEB23}
\begin{aligned}
\int_{\Omega} n\nabla \tilde{c}\cdot S \cdot \nabla  c &= s_0 \int_{\Omega}  c^{-\gamma} n |\nabla c|^2 \\&=  s_0 \int_{\Omega}  c^{1-\gamma} |\nabla c|^2 -s_0\int_{\Omega} c^{-\gamma} \Delta c   |\nabla c|^2 .
\end{aligned}
\end{equation}
To handle the rightmost term, we apply integration by parts and the vector identity $\nabla |\nabla f|^2 = 2 D^2 f \cdot \nabla f$ as
\[
\begin{aligned}
-s_{0} \int_{\Omega} c^{-\gamma} \Delta c |\nabla c|^2 &= s_{0} \int_{\Omega} \nabla c \cdot \nabla(c^{-\gamma} |\nabla c|^2) \\&= s_{0} \int_{\Omega}2   c^{-\gamma} \nabla c \cdot D^2 c \cdot \nabla c  -\gamma  c^{-\gamma-1} |\nabla c|^4. 
\end{aligned}
\]
Setting
\[
\mathcal{H}(c) := c^\gamma,\qquad  
\Theta(c):=\int_{1}^{c}\frac{1}{\mathcal{H}(\tau)}d\tau,
\]
and using the pointwise estimate   
\[
\frac{c^{1+\gamma}}{\gamma} |D^2 \Theta|^2 = \frac{1}{\gamma} c^{1-\gamma} |D^2 c|^2 - 2 c^{-\gamma} \nabla c \cdot D^2 c \cdot \nabla c  + \gamma c^{-\gamma-1} |\nabla c|^4,
\]
we arrive at
\begin{equation}\label{FEB24}
- s_{0} \int_{\Omega} c^{-\gamma} \Delta c |\nabla c|^2=    \frac{s_{0} }{\gamma} \int_{\Omega} c^{1-\gamma} |D^2 c|^2 -   \frac{s_{0} }{\gamma} \int_{\Omega}  c^{1+\gamma}  |D^2 \Theta|^2.
\end{equation}
Note from \eqref{cruciallem_1} with $k=d=2$  that
\[
 \frac{4 \gamma^{2} }{(2+\sqrt{2})^2}\int_{\Omega} c^{-\gamma-1} |\nabla c|^4 \le  \int_{\Omega} c^{1+\gamma} |D^2 \Theta(c)|^2.
\]
Thus, by  \eqref{cruciallem_2} with   $\delta = \frac{2+\sqrt{2}}{2}$, we see that
\[
\frac{2}{9+4\sqrt{2}}\int_{\Omega} c^{1-\gamma} |D^2 c|^2 \le  \int_{\Omega} c^{1+\gamma} |D^2 \Theta(c)|^2.
\]
Substituting this into \eqref{FEB24} gives
\[
\begin{aligned}
-s_0 \int_{\Omega} c^{-\gamma} \Delta c |\nabla c|^2 &\le \frac{s_{0}}{\gamma} \int_{\Omega} c^{1-\gamma} |D^2 c|^2 - \frac{s_{0}}{\gamma} \frac{2}{9+4\sqrt{2}} \int_{\Omega} c^{1-\gamma} |D^2 c|^2 \\
&= \frac{s_{0}}{\gamma} \frac{7+4\sqrt{2}}{9+4\sqrt{2}} \int_{\Omega} c^{1-\gamma} |D^2 c|^2.
\end{aligned}
\]
It follows from \eqref{FEB23} that\[
\int_{\Omega} n\nabla \tilde{c}\cdot S \cdot \nabla c \le s_0 \int_{\Omega} c^{1-\gamma} |\nabla \tilde{c}|^2 + \frac{s_{0}}{\gamma} \frac{7+4\sqrt{2}}{9+4\sqrt{2}} \int_{\Omega} c^{1-\gamma} |D^2 c|^2.
\]
Recall from Lemma~\ref{ULE} that there exists $C_{\Omega}>0$ such that
\[
\sup_{x\in\Omega,t>0} c^{-1}\le \frac{C_{\Omega}}{\int_{\Omega}n_{0}}.
\]
Note that 
since $\Omega$ is convex and $\nabla c\cdot\nu=0$ on $\partial\Omega$, we have $\partial_\nu |\nabla c|^2 \le 0$ on $\partial\Omega$ and thus, 
\[
\int_{\Omega} |D^2 c|^2=\int_{\Omega} |\Delta c|^2+\frac{1}{2}\int_{\Omega}\partial_\nu |\nabla c|^2 \le \int_{\Omega} |\Delta c|^2.
\]  
Combining the above, we can deduce that
\[
\int_{\Omega} n\nabla \tilde{c}\cdot S \cdot \nabla c \le s_0 \bke{\frac{C_{\Omega}}{\int_{\Omega}n_{0}}}^{\gamma-1} \int_{\Omega} |\nabla \tilde{c}|^2 + \frac{s_{0}}{\gamma}\frac{7+4\sqrt{2}}{  9+4\sqrt{2} }\bke{\frac{C_{\Omega}}{\int_{\Omega}n_{0}}}^{\gamma-1} \int_{\Omega} |\Delta \tilde{c}|^2.
\]
Plugging this estimate into \eqref{CH1},  
\[
\frac{1}{2}\frac{d}{dt}\int_{\Omega}( \tilde{c}^{2}+|\nabla \tilde{c}|^{2})  + \bkt{1 - \frac{s_0}{\gamma}\frac{7+4\sqrt{2}}{  9+4\sqrt{2} }\bke{\frac{C_{\Omega}}{\int_{\Omega}n_{0}}}^{\gamma-1}}\int_{\Omega}|\Delta \tilde{c}|^{2} \le \bket{  s_0 \bke{\frac{C_{\Omega}}{\int_{\Omega}n_{0}}}^{\gamma-1}-1}\int_{\Omega }|\nabla \tilde{c}|^{2}.
\]
By Lemma~\ref{lem3prop2}, the right-hand side  can be computed as
\[
\bket{  s_0 \bke{\frac{C_{\Omega}}{\int_{\Omega}n_{0}}}^{\gamma-1}-1}\int_{\Omega }|\nabla \tilde{c}|^{2}\le \frac{1}{\mu_{1}}\bket{  s_0 \bke{\frac{C_{\Omega}}{\int_{\Omega}n_{0}}}^{\gamma-1}-1}_{+}\int_{\Omega }|\Delta \tilde{c}|^{2},
\]
where 
\[
\bket{f}_{+}:=\begin{cases}
0, & \text{if } f\le 0,\\[1mm]
f, & \text{if } f>0.
\end{cases}
\]
Thus,
\[
\frac{1}{2}\frac{d}{dt}\int_{\Omega}( \tilde{c}^{2}+|\nabla \tilde{c}|^{2})  +  \bkt{1 - \frac{s_0}{\gamma}\frac{7+4\sqrt{2}}{  9+4\sqrt{2} }\bke{\frac{C_{\Omega}}{\int_{\Omega}n_{0}}}^{\gamma-1} -\frac{1}{\mu_{1}}\bket{  s_0 \bke{\frac{C_{\Omega}}{\int_{\Omega}n_{0}}}^{\gamma-1}-1}_{+}}\int_{\Omega}|\Delta \tilde{c}|^{2} \le0,
\]
where the coefficient of $\int_{\Omega}|\Delta \tilde{c}|^{2}$ is strictly positive provided that 
\begin{equation}\label{SSTAR}
  s_0< s_{\star}:=\frac{1+ \mu_{1}^{-1}}{\frac{1}{\gamma}\frac{7+4\sqrt{2}}{9+4\sqrt{2}}+\mu_{1}^{-1}}\bke{\frac{\int_{\Omega}n_{0}}{C_{\Omega}}}^{\gamma-1}.
\end{equation}
Therefore, again by Lemma~\ref{lem3prop2},   for some $C>0$
\[
 \frac{d}{dt}\int_{\Omega}( \tilde{c}^{2}+|\nabla \tilde{c}|^{2}) + C \int_{\Omega }( \tilde{c}^{2}+|\nabla \tilde{c}|^{2})\le 0,
\]
which implies the exponential decay of $\|\tilde{c}(\cdot, t)\|_{H^1(\Omega)}$ similar to \eqref{H1_decay}. Now,  as in Case (i), utilizing the uniform boundedness of $\tilde{n}$ from Theorem~\ref{THM1} and standard elliptic regularity, we can upgrade this exponential decay $\tilde{c}\rightarrow 0 $ to $W^{1,\infty}(\Omega)$ topology. Repeating the same argument as in Case (i), 
we can deduce $\tilde{n} \rightarrow 0$ in $L^{2}(\Omega)$ exponentially in time and thus, eventually we have the exponential decay  $\tilde{n} \rightarrow 0$ in $L^{\infty}(\Omega)$ by Lemma~\ref{HOLDERNORM} and \eqref{ineq:full} with $d=p=2$.
\end{pfthm3}

%%%%
\bibliographystyle{amsplain}

\end{document}